\documentclass[12pt,a4paper]{article}
\usepackage[cp1251]{inputenc}
\usepackage{amsmath,amsthm}
\usepackage{amsfonts,amstext,amssymb,verbatim,epsfig}
\usepackage{dsfont}
\usepackage{enumerate}
\usepackage{psfrag}
\usepackage{graphicx}
\usepackage{subfig}
\usepackage{color}
\usepackage{float}
\usepackage{subfig}
\usepackage{enumitem}
\usepackage{mathrsfs}
\usepackage{hyperref}
\usepackage{xcolor}
\hypersetup{
    colorlinks,
    linkcolor={blue!50!black},
    citecolor={blue!50!black},
    urlcolor={blue!80!black}
}

\usepackage[top=3cm, bottom=3.5cm, left=2.5cm, right=2.5cm]{geometry} 

\setlength\parindent{0pt}

\sloppy 
\def\R{{\mathbb{R}}}
\def\N{{\mathbb{N}}}
\def\Z{{\mathbb{Z}}}
\def\P{{\mathbb{P}}}
\def\E{{\mathbb{E}}}


\newtheorem{theorem}{Theorem}[section]
\newtheorem{corollary}[theorem]{Corollary}
\newtheorem{lemma}[theorem]{Lemma}
\newtheorem{proposition}[theorem]{Proposition}

\newtheoremstyle{likedef}
  {}%
  {}%
  {}%
  {}
  {\bfseries}%
  {.}%
  {.5em}%
  {}%

\theoremstyle{likedef}

\newtheorem{definition}[theorem]{Definition}

\numberwithin{equation}{section}

\date{}

\begin{document}

\title{On questions of uniqueness for the vacant set of Wiener sausages and Brownian interlacements}
\author{
Yingxin Mu\thanks{
University of Leipzig, Department of Mathematics,  
Augustusplatz 10, 04109 Leipzig, Germany.
Email: yingxin.mu@uni-leipzig.de and artem.sapozhnikov@math.uni-leipzig.de. 
The research of both authors has been supported by the DFG Priority Program 2265 ``Random Geometric Systems'' (Project number 443849139).}
\and
Artem Sapozhnikov\footnotemark[1]
}

\maketitle


\begin{abstract}
We consider connectivity properties of the vacant set of (random) ensembles of Wiener sausages in $\R^d$ in the transient dimensions $d \geq 3$. We prove that the vacant set of Brownian interlacements contains at most one infinite connected component almost surely. For finite ensembles of Wiener sausages, we provide sharp polynomial bounds on the probability that their vacant set contains at least $2$ connected components in microscopic balls. The main proof ingredient is a sharp polynomial bound on the probability that several Brownian motions visit jointly all hemiballs of the unit ball while avoiding a slightly smaller ball.
\end{abstract}

\section{Introduction}
In this paper, we are interested in connectivity properties of the complement of (random) ensembles of Wiener sausages in $\R^d$ in the transient dimensions $d\geq 3$. Our main motivation comes from the question about uniqueness of the infinite connected component in the vacant set of Brownian interlacements in $\R^d$. Brownian interlacements, introduced by Sznitman in \cite{Sznitman-BI}, is the continuous counterpart of random interlacements on $\Z^d$ (see \cite{Sznitman-AM}); 
while random interlacements is a Poisson cloud of doubly-infinite random walks on $\Z^d$, Brownian interlacements is a Poisson cloud of doubly-infinite Wiener sausages of fixed positive radius, whose density is controlled by a parameter $\alpha>0$. 

\smallskip

Since its introduction in $2007$, the vacant set of random interlacements has been an important example of percolation model with strong, algebraically decaying correlations. The study of its phase transition has originated several groundbreaking new perspectives on percolation models---most notably, the decoupling inequalities, see e.g.\ \cite{Sznitman-DI,PT-SLT}---, which stimulated remarkable developments in the understanding of strongly correlated lattice models (see e.g.\ \cite{CT-Book, DRS-Book, DGRS-GFF} for comprehensive literature review). 

\smallskip

Our understanding of percolation models in $\R^d$ is still limited. The attention here has been mainly on the study of the Boolean model, the subset of $\R^d$ covered by balls of random radii centered about points of a stationary Poisson point process in $\R^d$, see e.g.\ \cite{MR-Book, Gouere08} and more recent \cite{DRT-Boolean}.
Lately, there has also been interest in the vacant set of the Boolean model, see e.g.\ \cite{ATT18,Pen18}. 
Although the analysis of the Boolean model can be quite delicate---especially when the radii distribution has algebraic tail---, the main challenge generally does not come from the continuum nature of the model, since the boundaries of connected components are quite simple.---To illustrate this point, we recall an argument from the proof of the uniqueness of the infinite connected component in the vacant set of the Boolean model by Meester and Roy (see \cite[Proposition~5.4]{MR-Uniqueness}): Each connected component in the vacant set contains on its boundary at least one point of the intersection of $d$ balls of the Boolean model; thus, the local number of connected components in the vacant set is a well-behaved random variable.---One can imagine that in continuum percolation models with richer microscopic structure, 
some features not present in discrete 
models, such as large number of components on microscales, the existence of infinite connected components of finite volume or unbounded components with no path to infinity (``star-like'' components), can become the prior challenge to rebut. 
The vacant set of Brownian interlacements is one such model. 

\smallskip

Due to similarities in the constructions of Poissonian clouds as well as large-scale properties of random walks and Wiener sausages, Brownian interlacements share basic properties with random interlacements, such as slow algebraic decay of correlations and absence of the so-called finite energy property; furthermore, as shown by Li \cite{Li-BI}, Brownian interlacements is almost surely connected for any intensity parameter $\alpha>0$ and its vacant set---the complement of Brownian interlacements in $\R^d$---undergoes a non-trivial percolation phase transition in $\alpha$. As it is relevant to our paper, let us be more precise about the latter result. Li actually proves that, for large densities $\alpha$, the vacant set of Brownian interlacements consists only of bounded components almost surely and, for small positive densities $\alpha$, it contains an unbounded connected component almost surely. 
As we remarked just above, unlike in discrete models, in continuum models it is generally not immediate that an unbounded connected component contains a continuous path to infinity. Jumping ahead though, we note that for the vacant set of Brownian interlacements it is indeed the case, which follows from our results. 

\smallskip

Although our initial motivation comes from the question about uniqueness of the infinite connected component in the vacant set of Brownian interlacements, the most novel part of our work is about the analysis of the microstructure of the vacant set of finite ensembles of independent Wiener sausages and is of independent interest. 
To put these results into context, we begin by describing the local picture of the Brownian interlacements in $\R^d$ ($d\geq 3$) and postpone its full construction as a Poisson point process to Section~\ref{sec:BI}. 

\smallskip

Let $\alpha>0$ and $r>0$. The Brownian interlacements at level $\alpha$ with radius $r$ is a random closed subset of $\R^d$, whose restriction to Euclidean ball $B(0,R)$ can be sampled as follows:
\begin{itemize}
\item
let $R'\geq R+r$;
\item
let $N$ be a Poisson distributed random variable with parameter $\alpha\mathrm{cap}(B(0,R'))$, where $\mathrm{cap}(\cdot)$ is the Wiener capacity\footnote{$\mathrm{cap}(B(0,s)) = \big(2\pi^{d/2}/\Gamma(\tfrac{d-2}{2})\big)s^{d-2}$, see e.g.\ \cite[(3.55)]{Sznitman-Book}};
\item
let $W^{(1)},W^{(2)},\ldots$ be independent Brownian motions in $\R^d$, independent from $N$, started from uniform points on the boundary of $B(0,R')$. 
\end{itemize}
Then (for any $R'\geq R+r$), the union of Wiener sausages of radius $r$ around the Brownian motions $W^{(1)},\ldots, W^{(N)}$, restricted to the ball $B(0,R)$, 
\[
\Big(\bigcup\limits_{n=1}^N\bigcup\limits_{t_n=0}^\infty B\big(W^{(n)}_{t_n},r\big)\Big)\cap B(0,R),
\]
has the same law as the Brownian interlacements at level $\alpha$ with radius $r$ in $B(0,R)$. (In essence, the Brownian interlacements in the infinite volume is obtained by taking the limits of $R'$ and $R$ to infinity.) Brownian interlacements with parameters $(\alpha,r)$, uniformly scaled by a factor $\lambda>0$, has the same law as Brownian interlacements with parameters $(\lambda^{2-d}\alpha,\lambda r)$, thus in the study of scale-invariant properties of Brownian interlacements for fixed $\alpha$ and $r$, it is not a loss of generality to consider $r=1$.

\medskip

We now describe our results. 
In Theorem~\ref{thm:uniqueness-BI} we show that for any $\alpha$ and $r$, 
\begin{equation}\label{intro:uniqueness-BI}
\begin{array}{c}
\text{the vacant set of Brownian interlacements either contains no infinite}\\
\text{components almost surely or exactly one infinite component almost surely.} 
\end{array}
\end{equation}
With by now a standard argument of van den Berg and Keane \cite{BK-Continuity}, one immediately infers from \eqref{intro:uniqueness-BI} that the percolation function---the probability that the connected component of the origin in the vacant set is infinite---is continuous in $\alpha$ throughout the supercritical phase of the vacant set of Brownian interlacements (see e.g.\ \cite[Corollary~1.2]{Teixeira-Uniqueness} for the argument in the case of the vacant set of random interlacements).

\smallskip

Brownian interlacements is shift-invariant and ergodic (cf.\ \eqref{eq:BI-ergodicity}), so the number of infinite connected components is a priori constant almost surely. 
The standard approach to uniqueness in percolation models on $\Z^d$ is to rule out separately by contradiction the two cases: (a) the number of infinite components is $k$ for some $2\leq k<\infty$ and (b) the number of infinite components is infinite. In the first case, one considers a large ball, which intersects all $k$ infinite components with positive probability, and modifies the configuration locally in order to merge all of them, thus obtaining, in contradiction with initial assumption, that a unique infinite component has to exist with positive probability. In the second case, the contradiction is commonly obtained via the Burton-Keane argument \cite{BK-Uniqueness}. If the number of infinite components is infinite, then there must be a positive density of so-called trifurcations---locations where an infinite component locally splits into at least $3$ infinite branches---, which infers that infinite components are macroscopically tree-like and cannot be embedded in $\Z^d$.
Our proof will follow the same steps; however, two issues have to be dealt with. On the one hand, the finite energy property---commonly used to justify local modifications---does not hold for Brownian interlacements. Indeed, doubly infinite Brownian paths that visit a predefined box have to be rerouted locally to ensure merging of infinite components resp.\ creation of a trifurcation. On the other hand, a tree-like structure of infinite components alone does not lead to a contradiction in continuum; one still has to rule out a possibility of infinite components of finite volume. In both cases, the major obstruction is the complicated structure of the vacant set on microscales. 

We resolve the first issue by a novel rerouting strategy. We make use of the connectedness of the Brownian interlacements to reroute Brownian paths that visit a predefined box in a neighborhood outside of the box, so that most of the volume of the respective rerouted Wiener sausages is supported in the occuped set of Brownian interlacements outside of the box. This rerouting method is quite robust and saves us from dealing with the microstructure of the vacant set in this step. In order to implement it, we require a slightly stronger connectivity property of Brownian interlacements than just its connectedness, see Proposition~\ref{prop:BI-connectivity-int}. 
Before we discuss the next issue, let us mention, that our rerouting method is very different from the one used by Teixeira in \cite{Teixeira-Uniqueness} to prove the uniqueness of the infinite component for the vacant set of random interlacements on $\Z^d$. Teixeira explores the geometry of $\Z^d$ to show that one can suitably reroute random walks inside the box; we were not able to adapt these ideas in continuum. Our method is more robust and---applied in discrete setting---allows to prove uniqueness of the infinite component in the vacant set of random interlacement on general vertex-transitive amenable transient graphs, see \cite{MS-RI-amenable}.

To resolve the issue of large thin components in the vacant set, we prove in Theorem~\ref{thm:expected-number-components} that for any $R'\geq R+7(d+1)$, 
\begin{equation}\label{intro:expected-number-components}
\begin{array}{c}
\text{expected number of vacant components in annulus $\{x\in\R^d\,:\,R\leq \|x\|\leq R'\}$,}\\ 
\text{which intersect its both the inner and the outer boundaries is finite.}
\end{array}
\end{equation}
Not only \eqref{intro:expected-number-components} allows to complete the proof of uniqueness, it also immediately implies that every unbounded component contains a path to infinity. (We actually have to use this observation also in the proof of the existence of trifurcations, see Lemma~\ref{l:trifurcation}.) We do not have a direct proof of \eqref{intro:expected-number-components}; instead, we use the fact, that the large number of connected components in continuum infers the existence of multiple components on microscales, and reduce \eqref{intro:expected-number-components} to a question about---mind the local picture of Brownian interlacements---the microstructure of the vacant set of finite ensembles of Wiener sausages. 

\smallskip

Let $W^{(1)},\ldots, W^{(K)}$ be independent Brownian motions in $\R^d$ ($d\geq 3$) started on the boundary of ball $B(0,2)$ and let 
\[
\mathcal V_K = \R^d\setminus\Big(\bigcup\limits_{k=1}^K\bigcup\limits_{t_k=0}^\infty B\big(W^{(k)}_{t_k},1\big)\Big)
\]
be the vacant set of the ensemble of $K$ respective Wiener sausages of radius $1$. 
In Theorem~\ref{thm:uniqueness-severalballs} we prove that for any $K$ and $\varepsilon\in(0,1)$, 
\begin{equation}\label{intro:uniqueness-severalballs}
\mathsf P\left[
\begin{array}{c}
\text{$\mathcal V_K\cap B(0,\varepsilon)$ contains at least $2$}\\
\text{connected components}
\end{array}
\right] \leq C\log^m\big(\tfrac1\varepsilon\big)\varepsilon^{d+1},
\end{equation}
for some dimension dependent constants $C$ and $m$. (In fact,  Theorem~\ref{thm:uniqueness-severalballs} estimates more generally the probability of simultaneous microscopic nonuniqueness in several well separated $\varepsilon$-balls.) By the local picture of Brownian interlacements, the bound \eqref{intro:uniqueness-severalballs} holds immediately also for the vacant set of Brownian interlacements at any level $\alpha$. 

\smallskip

The event in \eqref{intro:uniqueness-severalballs} exerts two opposite effects on the behavior of the Brownian motions. On the one hand, the ball $B(0,\varepsilon)$ is not completely covered by the Wiener sausages, hence none of the Brownian motions can visit the ball $B(0,1-\varepsilon)$. On the other hand, the ball $B(0,\varepsilon)$ must be intersected by the Wiener sausages from several different directions in order to disconnect the vacant set in several components, hence the Brownian motions either have to visit the ball $B(0,1+\varepsilon)$ on very peculiar locations (e.g.\ on the opposite sides near the boundary) or spend a long time in the ball. The interplay of these repulsive and attractive effects is essential to get good  decay. (In fact, to get any better decay than $\varepsilon^K$.) 
In Lemma~\ref{l:uniqueness-hemiball}, we make a crucial observation that the nonuniqueness event in  \eqref{intro:uniqueness-severalballs} implies the (geometrically much more transparent) event that (a) none of Brownian motions visit $B(0,1-\varepsilon)$ and (b) each $\varepsilon$-hemiball $\{x\in B(0,1+\varepsilon)\,:\,\langle x,e\rangle\geq -\varepsilon\}$ (see Figure~\ref{fig:hemiball}), where $\|e\|=1$, is visited by at least one of the Brownian motions. (This is actually the only step in the proof of \eqref{intro:uniqueness-BI}, where we essentially use that Wiener sausages are unions of Euclidean balls.)

\smallskip

Finally, in Theorem~\ref{thm:hitting-all-hemiballs}, we show that 
for independent Brownian motions $W^{(1)},\ldots, W^{(K)}$ in $\R^d$ started from uniform points in $\partial B(0,1)$ and for any $\varepsilon\in(0,1)$, 
\begin{equation}\label{intro:hitting-all-hemiballs}
\mathsf P\left[
\begin{array}{c}
\text{each $\varepsilon$-hemiball $\{x\in B(0,1):\langle x,e\rangle\geq -\varepsilon\}$, for $\|e\|=1$,}\\ 
\text{is visited by at least one of the Brownian motions and}\\
\text{none of the Brownian motions visits $B(0,1-\varepsilon)$}
\end{array}
\right]\leq C\log^m\big(\tfrac1\varepsilon\big)\varepsilon^{d+1},
\end{equation}
for some dimension dependent constants $C$ and $m$, which---together with the observation of Lemma~\ref{l:uniqueness-hemiball} and scaling invariance of the Brownian motion---implies \eqref{intro:uniqueness-severalballs}. Statement \eqref{intro:hitting-all-hemiballs} is the main technical novelty of this paper. To prove it, we show that the event in \eqref{intro:hitting-all-hemiballs} implies the existence of a certain finite cascade of Brownian excursions from $\partial B(0,1)$ to $\partial B(0,1+r_l)$ for some (random) decreasing sequence of ranges $r_l$, which is defined recursively in terms of the distance to the origin from the affine hull spanned by the starting points of Brownian excursions from the previous iteration, see Sections~\ref{sec:cascade} and \ref{sec:cascade-lexicographic}.

\smallskip

We would like to finish this discussion by mentioning a continuum percolation model introduced in \cite{EMP17,EP16}, which---despite exponential decay of correlations---is very similar on microscales to Brownian interlacements. The model is defined as a Poisson cloud of finite-time Wiener sausages in $\R^d$. 
The two papers only consider the set occupied by the Wiener sausages and do not need to face challenges coming from microscopic scales. 
One may ask if the vacant set in this model contains at most one infinite connected component. Because of the finite range of the sausages, one can justify local modifications in a more direct way (e.g.\ similar to the vacant set of the Boolean model in \cite{MR-Uniqueness}), but one may need to prove a statement like \eqref{intro:expected-number-components} for this model, to rule out the possibility of infinitely many infinite components. 
This may be one incentive to extend our results to ensembles of finite-time Wiener sausages. 

\smallskip

Let us outline how the article is organized. We have divided the article into three parts, which can essentially be read independently of each other. The first part is  Section~\ref{sec:hitting-hemiballs}, where we prove \eqref{intro:hitting-all-hemiballs} (see Theorem~\ref{thm:hitting-all-hemiballs}); Sections~\ref{sec:BM-basics} and \ref{sec:r-excursions} contain some preliminaries on Brownian motion; the key construction of the cascade of Brownian excursions is described in Sections~\ref{sec:cascade} and \ref{sec:cascade-lexicographic}; the main result about the cascade is Lemma~\ref{l:probability:tau}, which is proven in Section~\ref{sec:probability-tau}; some auxiliary results about perturbation of affine hulls and hitting probabilities for Brownian motions are proven, respectively, in Sections~\ref{sec:aux-affine-hulls} and \ref{sec:aux-BM-affine-hulls}.
The second part is Section~\ref{sec:uniqueness-WS}, where we prove a more general version of microscopic unqueness \eqref{intro:uniqueness-severalballs} for several well separated balls (see Theorem~\ref{thm:uniqueness-severalballs}); in Section~\ref{sec:uniqueness-WS-oneball-hemiballs}, we prove the main reduction of microscopic nonuniqueness \eqref{intro:uniqueness-severalballs} to the event about hitting all $\varepsilon$-hemiballs (see Lemma~\ref{l:uniqueness-hemiball}); in Sections~\ref{sec:nonuniqueness-reduction-to-hemiballs} and \ref{sec:uniqueness-excursions}, we prove microscopic nonuniqueness for one $\varepsilon$-ball using the result of Theorem~\ref{thm:hitting-all-hemiballs}; finally, in Section~\ref{sec:uniqueness-severalballs-proof}, we prove Theorem~\ref{thm:uniqueness-severalballs} about the microscopic nonuniqueness in several $\varepsilon$-balls. The third part is Sections~\ref{sec:BI}, \ref{sec:connectivity-BI} and \ref{sec:uniqueness-proof}; Section~\ref{sec:BI} contains definition of Brownian interlacements point process as a Poisson process on the space of doubly infinite paths as well as some useful sampling procedure (see \eqref{eq:sample-1} and \eqref{eq:sample-2}); Section~\ref{sec:connectivity-BI} contains a refinement of the result of Li \cite{Li-BI} about connectedness of Brownian interlacements (see Proposition~\ref{prop:BI-connectivity-int} and Corollary~\ref{cor:condBI-connectivity-int}); in Section~\ref{sec:uniqueness-proof} we prove  \eqref{intro:uniqueness-BI} (see Theorem~\ref{thm:uniqueness-BI}); we prove \eqref{intro:expected-number-components} using Theorem~\ref{thm:uniqueness-severalballs} in Section~\ref{sec:expected-number} (see Theorem~\ref{thm:expected-number-components}), which can be read independently of Sections~\ref{sec:BI}, \ref{sec:connectivity-BI} and the rest of Section~\ref{sec:uniqueness-proof}.

\smallskip

Throughout the paper, we write $B(x,r)$ for the closed Euclidean ball in $\R^d$ of radius $r$ centered in $x$ and $B(r)$ for $B(0,r)$. For $K\subset\R^d$, we write $B(K,r)$ for $\bigcup\limits_{x\in K}B(x,r)$.

\section{Hitting all hemiballs of a unit ball with several Brownian motions}\label{sec:hitting-hemiballs}

In this section we prove that several independent Brownian motions started on the boundary of $B(1)$ are unlikely to jointly visit all hemiballs of $B(1)$ while none of them visits a slightly smaller ball $B(1-\varepsilon)$. This is the main technical novelty of our paper.
In Section~\ref{sec:uniqueness-WS}, we relate this event to the nonuniqueness of connected components of the vacant set of several Wiener sausages in a tiny ball $B(\varepsilon)$ (Lemma~\ref{l:uniqueness-hemiball}) and prove that nonuniqueness is unlikely to occur in several separated small balls (Theorem~\ref{thm:uniqueness-severalballs}). This allows us in the end to prove that the expected number of large connected components of the vacant set of Brownian interlacements that intersect a ball is finite (Theorem~\ref{thm:expected-number-components}), which is an important ingredient in the proof that the number of infinite connected components in the vacant set of Brownian interlacements is not infinite (Proposition~\ref{prop:N=01}).

\begin{definition}\label{def:hemiball}
For $r>0$, $\delta>0$ and $e\in\R^d$ with $\|e\|=1$, let
\[
A_{e,\delta}(r) = \{x\in B(r)\,:\,\langle x,e\rangle\geq -\delta\}
\]
(see Figure~\ref{fig:hemiball}) 
and set $\mathcal A_{\delta}(r) = \big\{A_{e,\delta}(r)\,:\,e\in\R^d,\,\|e\|=1\big\}$. 
We call every $A_{e,\delta}(r)$ a \emph{$\delta$-hemiball of $B(r)$}. 
\end{definition}

\begin{figure}[!tp]
\centering
\resizebox{5cm}{!}{\input 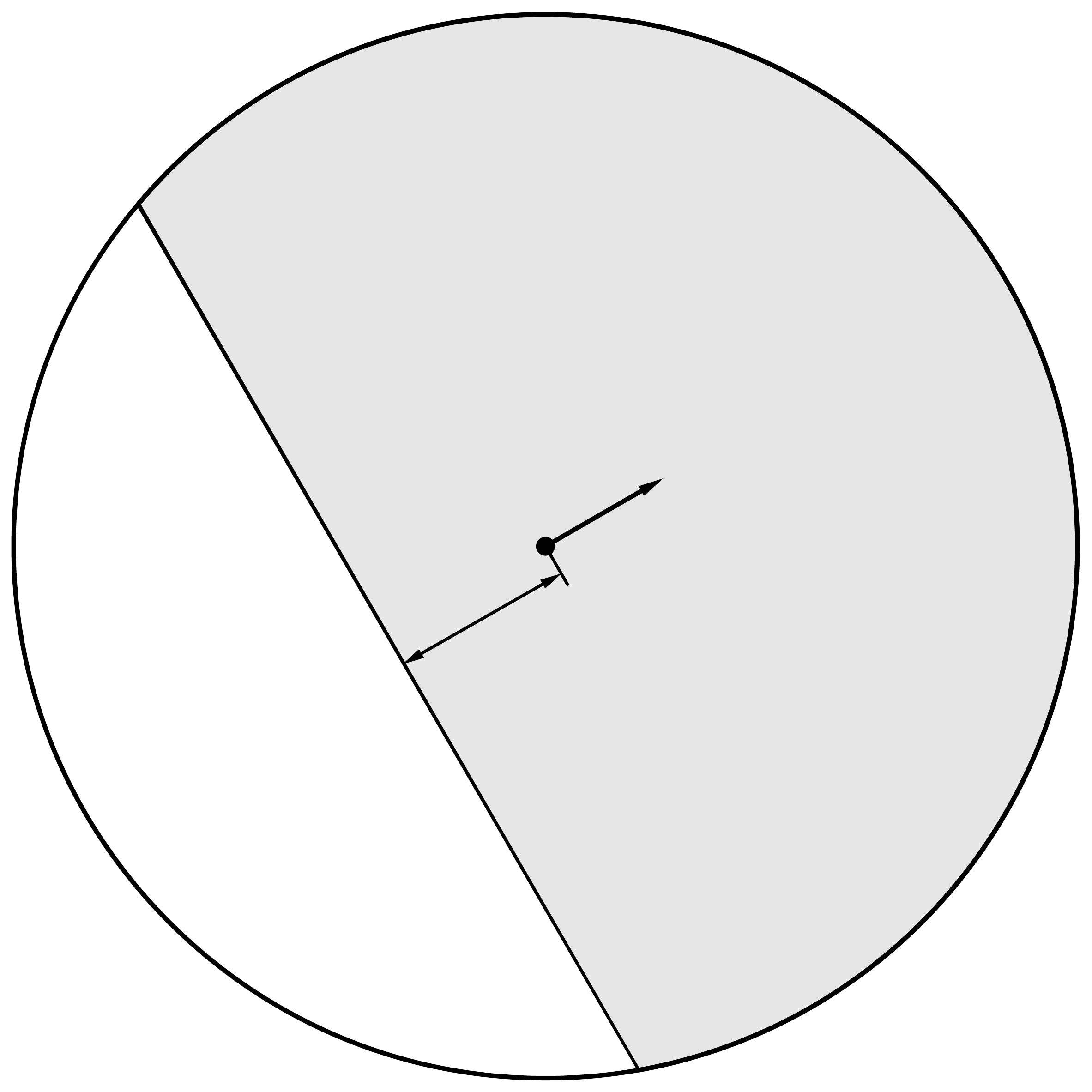_t}
\caption{$A_{e,\delta}(r)= \{x\in B(r)\,:\,\langle x,e\rangle\geq -\delta\}$}
\label{fig:hemiball}
\end{figure}

\begin{theorem}\label{thm:hitting-all-hemiballs} Let $d\geq 3$ and $\varepsilon\in(0,1)$. For $1\leq K\leq d$, let $W^{(1)},\ldots, W^{(K)}$ be independent Brownian motions in $\R^d$, started from uniform points on $\partial B(1)$. Let $E$ be the event that every $\varepsilon$-hemiball in $\mathcal A_\varepsilon(1)$ is visited by at least one of the Brownian motions. Let $F$ be the event that none of the Brownian motions visits $B(1-\varepsilon)$. 
Then there exist $C=C(d)$ and $m=m(d)$, such that for all $\varepsilon$,
\[
\mathsf P[E\cap F]\leq C\log^m(\tfrac1\varepsilon)\, \varepsilon^{d+1}.
\]
\end{theorem}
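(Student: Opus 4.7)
My plan is to reduce the event $E\cap F$ to the existence of a geometrically constrained cascade of outer Brownian excursions, and then bound its probability by a product of Bessel-type hitting estimates, obtaining the announced $d+1$ factors of $\varepsilon$.

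\emph{Setup.} On the event $F$, I split each $W^{(i)}$ into alternating inner excursions (lying in the shell $B(1)\setminus B(1-\varepsilon)$) and outer excursions (leaving $B(1)$ and either returning to $\partial B(1)$ or escaping to infinity, which is possible since $d\geq 3$). Let $X$ be the random finite set consisting of the $K$ starting points $x^{(i)}_0$ together with every subsequent reentry point on $\partial B(1)$. The event $F$ alone contributes a factor $\asymp\varepsilon^K$, since each motion independently hits $B(1-\varepsilon)$ from $\partial B(1)$ with probability $1-(1-\varepsilon)^{d-2}\sim(d-2)\varepsilon$.

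\emph{Geometric cascade.} A simple support-function argument shows that $E$ is equivalent to $\mathrm{dist}(0,\mathrm{conv}(Y))\leq\varepsilon$, where $Y=\bigcup_i W^{(i)}[0,\infty)\cap B(1)$; on $F$, $Y\subset B(1)\setminus B(1-\varepsilon)$. Since an inner excursion from $x\in\partial B(1)$ in the thin shell of width $\varepsilon$ typically reaches only points at angular distance $O(\sqrt\varepsilon)$ from $x$, the set $X$ itself must already satisfy $\mathrm{dist}(0,\mathrm{conv}(X))\lesssim\varepsilon$ up to negligible corrections; hence $X$ must be affinely full-dimensional and must surround the origin. Writing $L_\ell$ for the affine hull of the current $X_\ell$ and $h_\ell=\mathrm{dist}(0,L_\ell)$, one constructs inductively a cascade of outer excursions of ranges $r_0>r_1>\cdots>r_L$ with $r_\ell\asymp h_\ell$, each chosen to produce a reentry point that strictly enlarges $\dim L_\ell$ by $1$ and strictly decreases $h_\ell$. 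The affine-hull perturbation lemmas of Sections~\ref{sec:aux-affine-hulls} and~\ref{sec:aux-BM-affine-hulls} provide the quantitative monotonicity. The cascade terminates at step $L+1=d-(K-1)=d-K+1$, at which $\dim L_{L+1}=d$ and all hemiballs are covered.

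\emph{Probability estimate.} Each cascade outer excursion with range $\asymp r_\ell$ landing in a prescribed reentry target on $\partial B(1)$ contributes a factor $\lesssim\varepsilon/r_\ell$ by a Bessel scale-function estimate (the excursion must neither penetrate $B(1-\varepsilon)$ nor escape to infinity and must land in the prescribed region). Combining the $\varepsilon^K$ factor from the setup with the $d-K+1$ cascade factors and summing dyadically over the scales $r_\ell$, over which motion performs each excursion, and over the starting-point angular configurations, I get
\[
\mathsf P[E\cap F]\;\lesssim\;\varepsilon^K\cdot\prod_{\ell=0}^{d-K}\varepsilon\;=\;\varepsilon^{d+1},
\]
up to polylogarithmic corrections coming from the dyadic sums, which is the claimed bound.

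The principal obstacle is the rigor of the second paragraph. One needs a lexicographic rule for selecting the target hemiball at each stage (Section~\ref{sec:cascade-lexicographic}) so that the required range $r_\ell$ is unambiguously linked to $h_\ell$ and double-counting is avoided; a quantitative perturbation lemma ensuring that appending a reentry point at distance $\gtrsim r_\ell$ from $L_\ell$ produces a strict, uniformly positive decrease in $h_\ell$ and an increase in $\dim L_\ell$; and a shell-hitting estimate ruling out the scenario that a single long inner excursion along $B(1)\setminus B(1-\varepsilon)$ already covers the missing hemisphere, so that the cascade of outer excursions is genuinely forced and contributes the additional $\varepsilon^{d-K+1}$ beyond the trivial $\varepsilon^K$ from $F$.
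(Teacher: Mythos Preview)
Your overall architecture---decomposing the paths into excursions, tracking the affine hull $L_\ell$ of the landing points and its distance $h_\ell$ to the origin, and running a cascade whose scale $r_\ell$ is dictated by $h_{\ell-1}$---is indeed the strategy of the paper, and you correctly flag the need for a lexicographic selection rule and affine-hull perturbation lemmas.

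The probability accounting, however, has a real gap. You state that each cascade excursion contributes a factor $\lesssim\varepsilon/r_\ell$ and then display $\varepsilon^K\cdot\prod_{\ell=0}^{d-K}\varepsilon=\varepsilon^{d+1}$. These two claims are inconsistent: the product $\prod_\ell(\varepsilon/r_\ell)$ equals $\varepsilon^{d-K+1}/\prod_\ell r_\ell$, and since each $r_\ell\leq 1$ (and may be as small as $\varepsilon$), the denominator does not disappear. In the paper the factor $\varepsilon/r_\ell$ arises solely from ``avoid $B(1-\varepsilon)$ before reaching $\partial B(1+r_\ell)$''; the cancellation of $1/r_\ell$ comes from a separate ingredient you do not use in the estimate: the Poisson-kernel bounds of Lemmas~\ref{l:affine-hull-hitting-1}--\ref{l:affine-hull-hitting-3} on the probability that the reentry point lands in $\{z:d(0,\mathrm{aff}\{L_{\ell-1},z\})\leq h_\ell\}$. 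These give factors of the form $(h_\ell/h_{\ell-1})^{d-1-\dim L_{\ell-1}}$ (up to polylogs) which, after the telescoping in Section~\ref{sec:probability-tau}, produce exactly $\prod_\ell h_\ell$ and thereby cancel $\prod_\ell r_\ell^{-1}\asymp\prod_\ell h_{\ell-1}^{-1}$. You cite Section~\ref{sec:aux-BM-affine-hulls} but only for ``quantitative monotonicity'' of the cascade; its actual role is to supply these landing-probability factors, and without them your displayed bound is not established.

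Two further discrepancies. First, the cascade does not terminate only when the hull becomes full-dimensional: there are three stopping rules (Definition~\ref{def:tau})---the point count reaches $d+1$, the hull comes within $2\gamma\varepsilon$ of $0$, or the new point lies within $\varepsilon$ of the current hull---and each requires its own endgame estimate (this is \eqref{eq:probability-tau:final:2}). Your rigid ``exactly $d-K+1$ steps, each raising $\dim L_\ell$ by one'' misses the second and third cases. Second, the paper does not use your inner/outer split; it works entirely with $r$-excursions from $\partial B(1)$ to $\partial B(1+r)$ at a nested family of scales and controls their radii deterministically on the event $G$ of Lemma~\ref{l:eventG}. Your claim that inner excursions in the shell have angular spread $O(\sqrt\varepsilon)$ is only typical behavior, and you would anyway need an analogue of $G$ to make it quantitative.
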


The proof of Theorem~\ref{thm:hitting-all-hemiballs} proceeds by a delicate analysis of the interplay between the \emph{number} of Brownian excursions from $\partial B(1)$ to $\partial B(1+r)$ (for a (random) sequence of ranges $r$), their \emph{starting locations} and their \emph{radii}: On the one hand, each excursion comes with a cost of $\frac{\varepsilon}{r}$, since it has to avoid $B(1-\varepsilon$), so the number of excursions cannot be too big; on the other hand, if the number of excursions is not too big, for event $E$ to occur, either they have to start from a rather degenerate location (e.g.\ in case of two excursions, direct opposite to each other on $\partial B(1)$) or some of them have to have large radius, both of which are unlikely. In Section~\ref{sec:r-excursions}, we identify a likely event $G$, which allows us to assume that all the excursions have suitably small radii (see Lemma~\ref{l:eventG}); this argument is standard. The other two features (number and location) are controlled in Section~\ref{sec:cascade} by constructing a cascade of excursions with random ranges $r_l$; this approach is new and allows to implicitly assess the cost of a large number of excursions resp.\ peculiar location of their starting points; the main statement there is Lemma~\ref{l:probability:tau}.

\smallskip

In fact, if the starting points of the Brownian motions are well-separated and their affine hull is within distance $\varepsilon$ from $0$, which occurs with probability $\geq c\varepsilon^{(d+1-K)_+}$, then, conditioned on $F$, $E$ occurs with uniformly positive probability, hence $\mathsf P[E\cap F]\geq (c \varepsilon)^{(d+1)\vee K}$.

\subsection{Some notation and basic facts about Brownian motion}\label{sec:BM-basics}

We denote by $W$ a Brownian motion in $\R^d$ ($d\geq 3$) and by $\mathsf P_x$ the law of $W$ with $\mathsf P_x[W_0=x]=1$. 

For $r>0$, let $T_r = \inf\{t\geq 0\,:\, W_t\in \partial B(r)\}$ be the first entrance time of $W$ in $\partial B(r)$. We will use the following classical results:
\begin{lemma}\label{l:BM-hitting}
For any $r<R$ and $x\in\R^d$ with $r<\|x\|<R$, 
\[
\mathsf P_x[T_R<T_r] = \frac{r^{2-d} - \|x\|^{2-d}}{r^{2-d} - R^{2-d}}.
\]
In particular, $\mathsf P_x[T_r = +\infty] = 1- \frac{\|x\|^{2-d}}{r^{2-d}}$.
\end{lemma}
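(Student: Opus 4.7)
The plan is to exploit the fact that the function $u(y) = \|y\|^{2-d}$ is harmonic on $\R^d \setminus \{0\}$ for $d \geq 3$ (it is, up to a constant, the Newtonian kernel for the Laplacian). This is the standard route and it yields both statements in a few lines.

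First, I would fix $r < \|x\| < R$ and consider the stopping time $\tau = T_r \wedge T_R$, which is the exit time of $W$ from the open annulus $\{y : r < \|y\| < R\}$. Since this annulus is bounded, standard estimates on Brownian motion give $\mathsf E_x[\tau] < \infty$ and in particular $\tau < \infty$ $\mathsf P_x$-almost surely. Because $u$ is harmonic on the closed annulus $\{r \leq \|y\| \leq R\}$ and $u$ is bounded on this set, Itô's formula shows that the stopped process $M_t := u(W_{t \wedge \tau})$ is a bounded martingale. Applying the optional stopping theorem at $\tau$ gives
\[
\|x\|^{2-d} = u(x) = \mathsf E_x[u(W_\tau)] = r^{2-d}\, \mathsf P_x[T_r < T_R] + R^{2-d}\, \mathsf P_x[T_R < T_r].
\]
Since $\mathsf P_x[T_r < T_R] + \mathsf P_x[T_R < T_r] = 1$ by the almost sure finiteness of $\tau$, this is a single linear equation in one unknown, which I would solve to obtain the claimed formula for $\mathsf P_x[T_R < T_r]$.

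For the second statement, I would pass to the limit $R \to \infty$. I would first observe that the events $\{T_R < T_r\}$ are decreasing in $R$ (by the intermediate value theorem applied to the continuous path $t \mapsto \|W_t\|$, if $R_1 < R_2$ and $\|x\| < R_1$, then $T_{R_1} \leq T_{R_2}$, so $T_{R_2} < T_r$ forces $T_{R_1} < T_r$) and that
\[
\{T_r = +\infty\} \;=\; \bigcap_{R > \|x\|} \{T_R < T_r\};
\]
the inclusion $\supseteq$ uses that if $T_r < \infty$ then the supremum $\sup_{t \leq T_r}\|W_t\|$ is finite and so $T_R > T_r$ for $R$ larger than this supremum, while the inclusion $\subseteq$ uses transience of $W$ in $d \geq 3$ (so $T_R < \infty$ a.s.\ for every $R > \|x\|$). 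Continuity of $\mathsf P_x$ along decreasing sequences then gives $\mathsf P_x[T_r = +\infty] = \lim_{R\to\infty}\mathsf P_x[T_R < T_r]$, and since $2-d < 0$ we have $R^{2-d} \to 0$, yielding the second identity.

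Honestly, there is no substantive obstacle here—the whole argument rests on the harmonicity of $\|\cdot\|^{2-d}$ and optional stopping. The only two points requiring a moment's care are: (i) verifying that Itô's formula applies, for which boundedness of $u$ and its derivatives on the closed annulus $\{r \leq \|y\| \leq R\}$ suffices, and (ii) justifying the limit $R \to \infty$ in the second part, which just needs the monotonicity identification of $\{T_r = \infty\}$ above together with transience. Both are entirely standard for $d \geq 3$ and would not require any new input beyond what is already implicit in the setup.
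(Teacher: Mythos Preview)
Your proof is correct and follows the standard route via harmonicity of $\|y\|^{2-d}$ and optional stopping. The paper does not actually prove this lemma; it simply cites \cite[Theorem~3.18 and Corollary~3.19]{MP-BM-book}, where essentially the same argument appears.
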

\begin{lemma}\label{l:poisson-formula}[Poisson's formula]
For any $r>0$, $B\in\mathscr B(\partial B(r))$ and $x\notin \partial B(r)$, 
\[
\mathsf P_x[W_{T_r}\in B] = \frac{r^{d-2}}{|\partial B(r)|}\int\limits_B\frac{|r^2-\|x\|^2|}{\|x-y\|^d}dy.
\]
\end{lemma}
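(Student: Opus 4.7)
The plan is to identify $u(x) := \mathsf P_x[W_{T_r}\in B]$ with the Poisson integral on both sides of $\partial B(r)$ via uniqueness of bounded harmonic functions with prescribed boundary values. First, I would establish that $u$ is harmonic on $\R^d\setminus\partial B(r)$: for $x$ with $\|x\|\neq r$ and $\rho>0$ small enough that $\overline{B(x,\rho)}\cap\partial B(r)=\emptyset$, the strong Markov property at $\tau_\rho:=\inf\{t\geq 0\,:\,\|W_t-x\|=\rho\}$ together with rotational symmetry of $W$ started at $x$ (which makes $W_{\tau_\rho}$ uniform on $\partial B(x,\rho)$) give
\[
u(x)=\mathsf E_x\bigl[u(W_{\tau_\rho})\bigr]=\frac{1}{|\partial B(x,\rho)|}\int_{\partial B(x,\rho)}u(y)\,d\sigma(y),
\]
which is the mean value property.

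Second, I would record the boundary and far-field behavior of $u$. By a.s.\ continuity of Brownian paths and regularity of every point of $\partial B(r)$ for exit from the (interior/exterior of the) ball, $u(x)\to\mathbf 1_B(x_0)$ as $x\to x_0$ (from either side of $\partial B(r)$) for every $x_0\in\partial B(r)$ that is a point of Lebesgue density $0$ or $1$ of $B$, i.e.\ for $\sigma$-a.e.\ $x_0$. For the exterior case, Lemma~\ref{l:BM-hitting} additionally gives $0\leq u(x)\leq\mathsf P_x[T_r<\infty]=(r/\|x\|)^{d-2}\to 0$ as $\|x\|\to\infty$, which is where the transience assumption $d\geq 3$ enters.

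Third, let
\[
v(x)=\frac{r^{d-2}}{|\partial B(r)|}\int_B\frac{|r^2-\|x\|^2|}{\|x-y\|^d}\,dy.
\]
A direct computation of the Laplacian in $x$ shows that the Poisson kernel $(x,y)\mapsto\frac{r^{d-2}}{|\partial B(r)|}\frac{|r^2-\|x\|^2|}{\|x-y\|^d}$ is harmonic in $x$ on $\R^d\setminus\partial B(r)$, hence so is $v$; the Poisson kernel is an approximate identity on $\partial B(r)$, so $v$ has the same boundary behavior as $u$; and in the exterior, $|r^2-\|x\|^2|=O(\|x\|^2)$ while $\|x-y\|^d\sim\|x\|^d$ uniformly for $y\in\partial B(r)$, so $v(x)=O(\|x\|^{2-d})\to 0$ at infinity. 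Applying the maximum principle to the bounded harmonic function $u-v$ on $B(r)$ (interior case) and on the annulus $\{r<\|x\|<R\}$ with $R\to\infty$ (exterior case, using the decay at infinity of both $u$ and $v$) yields $u\equiv v$ on $\R^d\setminus\partial B(r)$.

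The main obstacle is handling the interior and exterior cases uniformly: the single absolute value $|r^2-\|x\|^2|$ in fact encodes two distinct Dirichlet problems. In the exterior the transience assumption $d\geq 3$ is essential and enters precisely through Lemma~\ref{l:BM-hitting} to give the decay needed to make the exterior Dirichlet problem uniquely solvable; in the interior only the classical maximum principle is needed. A secondary technicality is that $B$ is only assumed Borel, so the boundary values $\mathbf 1_B$ are not continuous; this is resolved by working with a.e.\ boundary convergence, which suffices for uniqueness of bounded harmonic functions.
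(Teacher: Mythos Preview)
The paper does not actually prove this lemma; it simply records that it ``follows e.g.\ from \cite[Theorem~3.44]{MP-BM-book}''. Your outline is the standard textbook derivation---harmonicity of $x\mapsto\mathsf P_x[W_{T_r}\in B]$ via the mean value property, identification of the Poisson integral as the bounded harmonic function with the same boundary data, and (for the exterior) decay at infinity from Lemma~\ref{l:BM-hitting}---and is essentially what one finds in the cited reference, at least for the interior case. So there is no meaningful difference of approach to report.

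One small remark on the technicality you flag at the end: rather than arguing a.e.\ boundary convergence of $u$ and $v$ towards $\mathbf 1_B$ and then invoking a uniqueness statement for bounded harmonic functions with merely a.e.\ boundary data, it is cleaner to first prove the identity with $\mathbf 1_B$ replaced by an arbitrary continuous $f:\partial B(r)\to\R$. In that case the maximum principle applies directly (continuous boundary values), and you conclude that the two finite measures $\mathsf P_x[W_{T_r}\in\cdot]$ and $\frac{r^{d-2}}{|\partial B(r)|}\frac{|r^2-\|x\|^2|}{\|x-\cdot\|^d}\,d\sigma(\cdot)$ agree on all continuous test functions, hence on all Borel sets. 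This sidesteps the Fatou-type argument entirely.
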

Lemma~\ref{l:BM-hitting} can be found e.g.\ in \cite[Theorem~3.18 and Corollary~3.19]{MP-BM-book}, Lemma~\ref{l:poisson-formula} follows e.g.\ from \cite[Theorem~3.44]{MP-BM-book}.

\subsection{\texorpdfstring{$r$}{r}-excursions and their radius}\label{sec:r-excursions}
For a Brownian motion $W$ in $\R^d$ ($d\geq 3$) started on $\partial B(1)$ and $r>0$, 
consider the times of successive revisits of $B(1)$ after leaving $B(1+r)$:
\[
\tau_1 = 0,\,\, \eta_1 = \inf\{t>0\,:\,W_t\in\partial B(1+r)\},
\]
and for $k\geq 1$, 
\[
\tau_k = \inf\{t>\eta_{k-1}\,:\,W_t\in\partial B(1)\},\,\,
\eta_k = \inf\{t>\tau_k\,:\,W_t\in\partial B(1+r)\},
\]
where $\inf\emptyset = +\infty$. 

The fragments $\{W_t, t\in[\tau_k,\eta_k]\}_{k\geq 1}$ of the Brownian motion $W$ are called the \emph{$r$-excursions} of $W$. The \emph{radius} of $r$-excursion $W_{[\tau_k,\eta_k]}$ is defined as $\max\limits_{t\in[\tau_k,\eta_k]} \|W_t - W_{\tau_k}\|$.

\begin{lemma}\label{l:diameter-excursion}
Let $\varepsilon\in(0,1)$. Let $A$ be the event that, for some $r\in[\varepsilon,1]$, there is a $r$-excursion of $W$ with radius $\geq (\log\varepsilon)^2\, r$. 
Then there exists $C=C(d)$ such that for all $\varepsilon$ and $x\in\partial B(1)$, 
\[
\mathsf P_x\big[A,\,T_{1-\varepsilon}=+\infty\big]< C\,\varepsilon^{d+1}.
\]
\end{lemma}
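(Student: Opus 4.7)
The plan is a dyadic decomposition in the scale $r$ followed by the strong Markov property at a fixed dyadic scale, reducing the problem to a single-excursion estimate in a thin shell. Set $M=(\log\varepsilon)^2$, $r_j=2^j\varepsilon$ for $j=0,\dots,J$ with $J=\lceil\log_2(1/\varepsilon)\rceil$, and define
\[
A_j^{*}=\bigl\{\exists\,\tau<t:\ W_\tau\in\partial B(1),\ \|W_t-W_\tau\|\ge Mr_j,\ W_u\in B(1+r_{j+1})\ \forall u\in[\tau,t]\bigr\}.
\]
If $A$ occurs with some $r\in[r_j,r_{j+1}]$, I take $\tau$ to be the start of the bad $r$-excursion and $t$ a time in it with $\|W_t-W_\tau\|\ge Mr\ge Mr_j$; since $W_u\in B(1+r)\subset B(1+r_{j+1})$ throughout the excursion, $A_j^{*}$ holds. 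So $A\subset\bigcup_{j=0}^J A_j^{*}$, and it suffices to bound each $\mathsf P_x[A_j^{*}\cap\{T_{1-\varepsilon}=\infty\}]$.

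Fix $j$ and enumerate the $r_{j+1}$-excursions of $W$ as $\{[\tau_k^*,\eta_k^*]\}_{k\ge 1}$, with radii $R_k$. Any witness $\tau<t$ for $A_j^{*}$ has $[\tau,t]\subset[\tau_k^*,\eta_k^*]$ for some $k$ (by the excursion structure, since $W_u\in B(1+r_{j+1})$ throughout $[\tau,t]$); then $\|W_\tau-W_{\tau_k^*}\|\le R_k$ and $\|W_t-W_{\tau_k^*}\|\le R_k$ by definition of $R_k$, so $\|W_t-W_\tau\|\le 2R_k$ forces $R_k\ge Mr_j/2$. Applying the strong Markov property at $\tau_k^*$ and at $\eta_k^*$, and using Lemma \ref{l:BM-hitting} for (i) the probability $p_{j+1}\asymp\varepsilon/(r_{j+1}+\varepsilon)$ that an excursion reaches $\partial B(1+r_{j+1})$ before $\partial B(1-\varepsilon)$, (ii) the return probability $q_{j+1}=(1+r_{j+1})^{2-d}$ from $\partial B(1+r_{j+1})$ to $\partial B(1)$, and (iii) the escape probability $\mathsf P_z[T_{1-\varepsilon}=\infty]\le C(r_{j+1}+\varepsilon)$ for $z\in\partial B(1+r_{j+1})$, and summing the geometric series in $k$ with ratio $p_{j+1}q_{j+1}$, I obtain
\[
\mathsf P_x[A_j^{*}\cap\{T_{1-\varepsilon}=\infty\}]\le C\,\tfrac{(r_{j+1}+\varepsilon)^2}{r_{j+1}}\,\sup_{y\in\partial B(1)}\mathsf P_y\bigl[R_1\ge Mr_j/2,\ T_{1-\varepsilon}>\eta_1^*\bigr],
\]
which, using $r_{j+1}\ge\varepsilon$, is $\le Cr_{j+1}$ times the single-excursion probability.

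The single-excursion probability is bounded by a shell estimate. Under the event in the supremum, the Brownian motion started at $y\in\partial B(1)$ reaches distance $\ge Mr_j/2$ from $y$ while staying in the shell $S=B(1+r_{j+1})\setminus B(1-\varepsilon)$ of radial thickness $\le 3r_j$. For any $T>0$,
\[
\mathsf P_y\bigl[R_1\ge Mr_j/2,\ T_{1-\varepsilon}>\eta_1^*\bigr]\le\mathsf P_y\bigl[\sup_{t\le T}\|W_t-y\|\ge Mr_j/2\bigr]+\mathsf P_y[\tau_S>T],
\]
with $\tau_S$ the first exit time of $S$. Standard Gaussian tail estimates for BM bound the first term by $Ce^{-c(Mr_j)^2/T}$; the exponential survival of BM in a thin shell (first Dirichlet eigenvalue of the Laplacian on $S$ of order $(r_{j+1}+\varepsilon)^{-2}$) bounds the second by $Ce^{-cT/(r_{j+1}+\varepsilon)^2}$. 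Balancing at $T=Mr_j(r_{j+1}+\varepsilon)$ gives exponent $cMr_j/(r_{j+1}+\varepsilon)\ge cM/6$ (using $r_{j+1}+\varepsilon\le 3r_j$), so the single-excursion probability is $\le Ce^{-cM}$.

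Combining, $\mathsf P_x[A_j^{*}\cap\{T_{1-\varepsilon}=\infty\}]\le Cr_{j+1}e^{-cM}$, and summing the geometric series $\sum_j r_{j+1}=O(1)$ yields $\mathsf P_x[A\cap\{T_{1-\varepsilon}=\infty\}]\le Ce^{-c(\log\varepsilon)^2}\le C\varepsilon^{d+1}$ for $\varepsilon$ below a $d$-dependent threshold (and absorbed in the constant otherwise). The main obstacle is the shell survival estimate above, which I would either derive from the spectral gap of the Dirichlet Laplacian on $S$, or establish by hand by iterating the strong Markov property across $O(M)$ consecutive angular sectors of width $\asymp r_j+\varepsilon$ on $\partial B(1)$, using Lemma \ref{l:BM-hitting} in each sector. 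The precise value of $c>0$ is unimportant, since $e^{-c(\log\varepsilon)^2}$ dominates any polynomial in $\varepsilon$.
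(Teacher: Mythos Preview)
Your proof is correct and follows essentially the same strategy as the paper: dyadic decomposition in the scale $r$, a union bound over scales, decomposition by the index of the first large-radius excursion via the strong Markov property, and a single-excursion estimate showing that Brownian motion confined to a shell of width $\sim\rho$ is exponentially unlikely to travel distance $\sim M\rho$. The paper's only differences are cosmetic: it forgoes your escape-probability factor (picking up a harmless extra $\log(1/\varepsilon)$ when summing over scales) and proves the single-excursion bound directly by the iterated strong-Markov argument you sketch at the end (exiting successive balls of radius $\sim\rho$), rather than via your time-split into a Gaussian tail plus a shell-survival term.
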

\begin{proof}
It suffices to prove the lemma for $\varepsilon\leq \varepsilon_0(d)$, for some $\varepsilon_0(d)>0$ small enough. 

Let $A_r$ be the event that there is a $r$-excursion of $W$ with radius $\geq\frac12(\log\varepsilon)^2r$. Let $\mathcal R = \big\{\frac1{2^k}\,:\,0\leq k\leq \log_2(\frac1\varepsilon)\big\}$. Note that for every $r\in[\varepsilon,1]$ there exists $\rho\in\mathcal R$ such that $\frac12\rho<r\leq \rho$; in particular, if there is a $r$-excursion with radius $\geq(\log\varepsilon)^2r$, then $A_\rho$ occurs. Thus, 
\[
\mathsf P_x\big[A,\,T_{1-\varepsilon}=+\infty\big] \leq 
\sum\limits_{\rho\in\mathcal R}\mathsf P_x\big[A_\rho, T_{1-\varepsilon}=+\infty\big]\leq \big(1+\log_2\big(\tfrac1\varepsilon\big)\big)\,\sup\limits_{\rho\in\mathcal R}\mathsf P_x\big[A_\rho, T_{1-\varepsilon}=+\infty\big].
\]
Let $\rho\in\mathcal R$. Note that for any $z\in B(1+\rho)\setminus B(1-\varepsilon)$, the probability that a Brownian motion started at $z$ will exit from $B(1+\rho)$ before leaving $B(z,2\rho\sqrt{d})$ is at least $\frac{1}{2d}$.
Let $\zeta = \inf\{t\geq 0\,:\,\|W_t - W_0\|\geq \frac12(\log\varepsilon)^2\rho\}$. By considering successive exits of the Brownian motion from balls $B(W_0,2\sqrt{d}\rho k)$, for $1\leq k\leq \tfrac{1}{4\sqrt{d}}(\log\varepsilon)^2$ and using the strong Markov property, we obtain that
\[
\mathsf P_y[\zeta\leq \min(T_{1+\rho},T_{1-\varepsilon})]\leq \big(1-\tfrac{1}{2d}\big)^{\lfloor\tfrac{1}{4\sqrt{d}}(\log\varepsilon)^2\rfloor},\quad y\in\partial B(1).
\]
Furthermore, by Lemma~\ref{l:BM-hitting}, for any $\rho\geq \varepsilon$, 
\[
\mathsf P_y[T_{1+\rho}<T_{1-\varepsilon}] 
\leq \mathsf P_y[T_{1+\varepsilon}<T_{1-\varepsilon}]
\leq \tfrac23,\quad y\in\partial B(1),
\]
for all $\varepsilon\leq \varepsilon_1$, where $\varepsilon_1=\varepsilon_1(d)>0$ is small enough. 

Thus, by decomposing event $A_\rho$ according to the first $\rho$-excursion with big radius and using the strong Markov property, we obtain that 
\[
\mathsf P_x\big[A_\rho, T_{1-\varepsilon}=+\infty\big]\leq 
\sum\limits_{n=1}^\infty \big(\tfrac23\big)^{n-1}\big(1-\tfrac{1}{2d}\big)^{\lfloor\tfrac{1}{4\sqrt{d}}(\log\varepsilon)^2\rfloor}
= 3\big(1-\tfrac{1}{2d}\big)^{\lfloor\tfrac{1}{4\sqrt{d}}(\log\varepsilon)^2\rfloor}
<\frac{\varepsilon^{d+1}}{1+\log_2\big(\tfrac1\varepsilon\big)},
\]
for all $\varepsilon\leq \varepsilon_2$, where $\varepsilon_2=\varepsilon_2(d)>0$ is small enough. 
The proof is completed. 
\end{proof}

\medskip

In the proof of Theorem~\ref{thm:hitting-all-hemiballs} we will use the following result, which is immediate from Lemma~\ref{l:diameter-excursion}. 
\begin{lemma}\label{l:eventG}
For $1\leq K\leq d$, let $W^{(1)},\ldots, W^{(K)}$ be independent Brownian motions in $\R^d$ ($d\geq 3$) started on $\partial B(1)$. For $\varepsilon\in(0,1)$, let $G$ be the event that for every $r\in[\varepsilon,1]$, every $r$-excursion of every Brownian motion $W^{(k)}$ has radius $<(\log\varepsilon)^2r$ and let $F$ be the event that none of the Brownian motions $W^{(1)},\ldots, W^{(K)}$ visits $B(1-\varepsilon)$. Then there exists $C=C(d)$ such that for all $\varepsilon$, 
\[
\mathsf P[G^c\cap F]\leq C\varepsilon^{d+1}.
\]
\end{lemma}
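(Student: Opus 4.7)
The plan is to reduce the statement to Lemma~\ref{l:diameter-excursion} applied to each Brownian motion individually, via a union bound.

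First, I would observe that by the very definition of $G^c$, there must exist an index $k\in\{1,\ldots,K\}$ and some $r\in[\varepsilon,1]$ such that some $r$-excursion of $W^{(k)}$ has radius at least $(\log\varepsilon)^2 r$. Denoting by $A^{(k)}$ the event that such an $r$ and such an $r$-excursion exist for $W^{(k)}$, this gives the inclusion
\[
G^c\subseteq \bigcup_{k=1}^K A^{(k)}.
\]
Next, the event $F$ says that none of the $K$ Brownian motions visits $B(1-\varepsilon)$; in particular, $F\subseteq \{T^{(k)}_{1-\varepsilon}=+\infty\}$ for every $k$, where $T^{(k)}_{1-\varepsilon}$ is the first entry time of $W^{(k)}$ into $\partial B(1-\varepsilon)$. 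Combining these two observations with a union bound yields
\[
\mathsf P[G^c\cap F]\,\leq\, \sum_{k=1}^K \mathsf P\bigl[A^{(k)}\cap\{T^{(k)}_{1-\varepsilon}=+\infty\}\bigr].
\]

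Each summand only depends on a single Brownian motion $W^{(k)}$ started on $\partial B(1)$, so I can condition on (or integrate over) its starting point $x\in\partial B(1)$ and apply Lemma~\ref{l:diameter-excursion}, which provides a bound that is uniform in $x\in\partial B(1)$. This gives $\mathsf P[A^{(k)}\cap\{T^{(k)}_{1-\varepsilon}=+\infty\}] < C(d)\,\varepsilon^{d+1}$ for each $k$. Using $K\leq d$ and absorbing the factor $K$ into the constant completes the proof with $C'(d)=d\cdot C(d)$.

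No essential obstacle is anticipated here: the only points to be mindful of are that the bound in Lemma~\ref{l:diameter-excursion} is uniform in the starting point on $\partial B(1)$ (so the starting distributions of the $W^{(k)}$ are irrelevant), and that the exceedance scale $r$ in $G^c$ ranges over the same interval $[\varepsilon,1]$ as in Lemma~\ref{l:diameter-excursion}, so no additional dyadic covering is required beyond that already performed in the proof of that lemma.
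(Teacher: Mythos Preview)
Your proposal is correct and is exactly the argument the paper has in mind: the paper states that Lemma~\ref{l:eventG} ``is immediate from Lemma~\ref{l:diameter-excursion}'', and the union bound over the $K\leq d$ Brownian motions together with the uniform-in-$x$ bound of Lemma~\ref{l:diameter-excursion} is precisely how one makes that immediate.
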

By Lemma~\ref{l:eventG}, in the proof of Theorem~\ref{thm:hitting-all-hemiballs} we may assume that event $G$ occurs. 

\subsection{Cascade of excursions}\label{sec:cascade}

Let $W^{(1)}, \ldots, W^{(K)}$ be Brownian motions started on $\partial B(1)$. In what follows, we define recursively a family of $r$-excursions for these Brownian motions for a decreasing sequence of (random) ranges $r=r_l$. This construction is key to the estimation of the probability of the event $E\cap F\cap G$, see Lemma~\ref{l:EG-Lleqd}.

\medskip

Let $\varepsilon\in(0,1)$ be such that $\log(\tfrac1\varepsilon)\geq 1$ and define 
\begin{equation}\label{def:gamma}
\gamma=\gamma(\varepsilon) = (\log\varepsilon)^2 + 1
\end{equation}
and 
\[
\mathcal R = \big\{\tfrac1{2^k}\,:\,0\leq k\leq \log_2(\tfrac1\varepsilon)\big\}.
\]

\bigskip

\emph{Step $0$.} For $1\leq k\leq K$, let 
\[
\overline P^{(k)}_0 = W^{(k)}_0 \in\partial B(1)
\]
be the starting point of the Brownian motion $W^{(k)}$. We denote by $H_0$ the affine hull of these $K$ points,
\[
H_0 = \mathrm{aff}\big\{\overline P^{(k)}_0\,:\, 1\leq k\leq K\big\},
\]
and by $d_0$ the Euclidean distance from $H_0$ to the origin. (Note that $d_0$ is random.) For convenience, we also write $\overline N_0$ instead of $K$, when referring to the number of points $\overline P^{(k)}_0$.

\smallskip

If $\overline N_0\geq d+1$ or $d_0<2\gamma\varepsilon$, we stop the procedure; otherwise, proceed to Step $1$. 

\bigskip

\emph{Step $1$.} Let 
\[
r_1 = \max\big\{r\in\mathcal R\,:\, r\leq \gamma^{-1} d_0\big\}
\]
and consider the $r_1$-excursions of the Brownian motions.\footnote{Since $d_0\geq 2\gamma\varepsilon$, there is $r\in\mathcal R$ such that $r\leq \gamma^{-1}d_0$, so $r_1$ is well defined.} For $1\leq k\leq K$, let 
\[
\overline \tau^{(k)}_{1,1} = 0,\,\, \overline \eta^{(k)}_{1,1} = \inf\{t>0\,:\,W^{(k)}_t\in\partial B(1+r_1)\},
\]
and for $i\geq 2$, 
\[
\overline \tau^{(k)}_{1,i} = \inf\{t>\overline\eta^{(k)}_{1,i-1}\,:\,W^{(k)}_t\in\partial B(1)\},\,\,
\overline\eta^{(k)}_{1,i} = \inf\{t>\overline\tau^{(k)}_{1,i}\,:\,W^{(k)}_t\in\partial B(1+r_1)\},
\]
where $\inf\emptyset = +\infty$. We denote by $\overline N^{(k)}_1 = \sup\{i\geq 0\,:\,\overline \tau^{(k)}_{1,i}<\infty\}$ the number of $r_1$-excursions for $W^{(k)}$ and by 
\[
\overline P^{(k)}_{1,i} = W^{(k)}_{\overline \tau^{(k)}_{1,i}},\quad 1\leq i\leq \overline N^{(k)}_1,
\]
the starting points of the $r_1$-excursions for $W^{(k)}$. 
Let 
\[
H_1 = \mathrm{aff}\big\{\overline P^{(k)}_{1,i_k}\,:\,1\leq k\leq K,\,1\leq i_k\leq \overline N^{(k)}_1\big\}
\]
be the affine hull of all the starting points of the $r_1$-excursions and denote by $d_1$ the Euclidean distance from $H_1$ to the origin. 
Finally, let $\overline N_1 = \sum\limits_{k=1}^N \overline  N^{(k)}_1$ be the total number of $r_1$-excursions. 

\smallskip

If $\overline N_1\geq d+1$ or $d_1<2\gamma\varepsilon$, we stop the procedure; otherwise, 
proceed to Step $2$.

\bigskip

\emph{Step $l$} ($l\geq 2$). If the construction proceeds to Step $l$, then we have defined in the preceding step the radius $r_{l-1}$, the affine hull $H_{l-1}$ spanned by all the starting points of the $r_{l-1}$-excursions, its distance $d_{l-1}$ to the origin, and the total number $\overline N_{l-1}$ of $r_{l-1}$-excursions; and, furthermore, 
$\overline N_{l-1}\leq d$ and $d_{l-1}\geq 2\gamma\varepsilon$.

We define 
\[
r_l = \max\big\{r\in\mathcal R\,:\, r\leq \gamma^{-1} d_{l-1}\big\}
\]
and consider the $r_l$-excursions of the Brownian motions.\footnote{Note that $r_l$ is well defined when $d_{l-1}\geq 2\gamma\varepsilon$ and $r_l\leq r_{l-1}$.} Similarly to Step $1$, we introduce the following random variables:
\begin{itemize}
\item
the successive revisits of $B(1)$ after leaving $B(1+r_l)$, 
\begin{equation}\label{def:overline-tau-eta}
\begin{aligned}
\overline \tau^{(k)}_{l,1} &= 0,  &\overline \eta^{(k)}_{l,1} &= \inf\{t>0\,:\,W^{(k)}_t\in\partial B(1+r_l)\},\\
\overline \tau^{(k)}_{l,i} &= \inf\{t>\overline\eta^{(k)}_{l,i-1}\,:\,W^{(k)}_t\in\partial B(1)\},
&\overline\eta^{(k)}_{l,i} &= \inf\{t>\overline\tau^{(k)}_{l,i}\,:\,W^{(k)}_t\in\partial B(1+r_l)\},
\end{aligned}
\end{equation}
for $i\geq 2$, where $\inf\emptyset = +\infty$; 

\item
the number $\overline N^{(k)}_l$ of $r_l$-excursions for $W^{(k)}$ and the total number $\overline N_l$ of $r_l$-excursions; 

\item
the starting points of the $r_l$-excursions,
\[
\overline P^{(k)}_{l,i} = W^{(k)}_{\overline \tau^{(k)}_{l,i}},\quad 1\leq i\leq \overline N^{(k)}_l;
\]

\item
the affine hull of the starting points of $r_l$-excursions,
\[
H_l = \mathrm{aff}\big\{\overline P^{(k)}_{l,i_k}\,:\,1\leq k\leq K,\,1\leq i_k\leq \overline N^{(k)}_l\big\};
\]

\item
the Euclidean distance $d_l$ from $H_l$ to the origin.\footnote{Since each of the starting points of $r_{l-1}$-excursions is also a starting point of a $r_l$-excursion, we have that $H_0\subseteq H_1\subseteq H_2\subseteq \ldots$ and, thus, $d_0\geq d_1\geq d_2\geq\ldots$.}
\end{itemize}

\smallskip

If the total number of excursions $\overline N_l\geq d+1$ or the distance $d_l<2\gamma\varepsilon$, we stop the procedure; otherwise, proceed to the next step. 

\bigskip

We denote by $L$ the \emph{last step} of the above construction, 
\begin{equation}\label{def:L}
L = \inf\big\{l\geq 0\,:\, \overline N_l\geq d+1\text{ or }d_l<2\gamma\varepsilon\big\}\in [0,+\infty].
\end{equation}
Note that the construction can get stuck in an infinite loop, resulting in $L=+\infty$, if for some $l$, $r_l=r_{l-1}$. In fact, the following holds. 
\begin{lemma}\label{l:infinite-loop-L}
Either $L=+\infty$ or $\overline N_l\geq \overline N_{l-1}+1$ for all $1\leq l\leq L$. In particular, if $L<+\infty$ then $L\leq d$. 
\end{lemma}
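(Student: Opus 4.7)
\emph{Plan.} My approach rests on a monotonicity observation for how $r$-excursions behave as the range $r$ shrinks, coupled with a ``no strict increase implies infinite loop'' dichotomy. The preparatory fact I would establish first is that, whenever $r_l \leq r_{l-1}$, the starting times $\{\overline\tau^{(k)}_{l-1,j}\}_j$ of the $r_{l-1}$-excursions of $W^{(k)}$ form a subset of the starting times $\{\overline\tau^{(k)}_{l,i}\}_i$ of the $r_l$-excursions. Indeed, between consecutive elements of $\{\overline\tau^{(k)}_{l-1,j}\}_j$ the Brownian motion visits $\partial B(1+r_{l-1})$, and by continuity it must cross $\partial B(1+r_l)$ in between, which places each $\overline\tau^{(k)}_{l-1,j}$ into the bookkeeping \eqref{def:overline-tau-eta} of the finer $r_l$-excursions. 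This immediately yields $\overline N^{(k)}_l \geq \overline N^{(k)}_{l-1}$ for each $k$, hence $\overline N_l \geq \overline N_{l-1}$, and moreover $H_l \supseteq H_{l-1}$, so $d_l \leq d_{l-1}$.

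The heart of the argument is the dichotomy. Suppose $L < +\infty$ and that for some $l \in \{1,\ldots,L\}$ one had the equality $\overline N_l = \overline N_{l-1}$. Since each $\overline N^{(k)}_l \geq \overline N^{(k)}_{l-1}$, the equality of aggregates forces equality in each coordinate, and then the subset relation forces the two sets of starting times, and hence the starting points $\overline P^{(k)}_{l,i}$ and $\overline P^{(k)}_{l-1,j}$, to coincide; thus $H_l = H_{l-1}$ and $d_l = d_{l-1}$. In the case $l < L$, non-stopping at step $l$ gives $d_l \geq 2\gamma\varepsilon$, and the defining recursion $r_{l+1} = \max\{r \in \mathcal R : r \leq \gamma^{-1} d_l\}$ yields $r_{l+1} = r_l$; the whole data at step $l+1$ then literally repeats that of step $l$, and by induction the stopping condition is never met, contradicting $L < +\infty$. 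In the case $l = L$, stopping at $L$ would require $\overline N_L \geq d+1$ or $d_L < 2\gamma\varepsilon$, but $\overline N_L = \overline N_{L-1} \leq d$ and $d_L = d_{L-1} \geq 2\gamma\varepsilon$ (the latter two from non-stopping at $L-1$) contradict both clauses. Either way we reach a contradiction, so $\overline N_l \geq \overline N_{l-1}+1$ for every $l \in \{1,\ldots,L\}$.

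For the bound $L \leq d$: granted the strict increase, one has $\overline N_{L-1} \geq \overline N_0 + (L-1) = K + L - 1 \geq L$ (using $K \geq 1$), while non-stopping at step $L-1$ gives $\overline N_{L-1} \leq d$, hence $L \leq d$ (the case $L=0$ being vacuous). The only delicate point I expect to face is the boundary case $l=L$, where one must carefully separate the two stopping clauses and verify that the failure to strictly increase $\overline N$ is incompatible with \emph{both}; the rest is a straightforward refinement argument about nested shells.
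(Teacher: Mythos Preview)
Your proposal is correct and follows essentially the same approach as the paper's proof: both hinge on the observation that $\overline N_l = \overline N_{l-1}$ forces $H_l = H_{l-1}$, hence $d_l = d_{l-1}$ and $r_{l+1} = r_l$, so the cascade loops forever. You are simply more explicit than the paper (which compresses the argument to two sentences and calls the bound $L\leq d$ ``obvious''), in particular by separately treating the boundary case $l=L$ and by spelling out the monotonicity $\overline N_l \geq \overline N_{l-1}$ via the nesting of excursion start times.
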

\begin{proof}
If there exists $l\leq L$ such that $\overline N_l=\overline N_{l-1}$, then $H_l=H_{l-1}$, hence $r_{l+1}=r_l$ and we have $L=+\infty$. The second statement is obvious from the definition of $L$.
\end{proof}

\smallskip

A crucial observation for the proof of Theorem~\ref{thm:hitting-all-hemiballs} is that event $E\cap G$ implies $L\leq d$:

\begin{lemma}\label{l:EG-Lleqd}
If events $E$ from Theorem~\ref{thm:hitting-all-hemiballs} and $G$ from Lemma~\ref{l:eventG} occur, then 
$L\leq d$.
\end{lemma}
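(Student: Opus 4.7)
My plan is to argue by contradiction. Suppose $E\cap G$ holds but $L>d$. By Lemma~\ref{l:infinite-loop-L} the assumption $L>d$ forces $L=+\infty$, so that $\overline N_l\leq d$ and $d_l\geq 2\gamma\varepsilon$ for every $l\geq 0$.

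The first substantive step is to show that the cascade \emph{stabilizes}. Because $r_l\leq r_{l-1}$, a short induction on the construction shows that every $r_{l-1}$-excursion starting time of a given Brownian motion is also an $r_l$-excursion starting time; hence $\overline N_l\geq \overline N_{l-1}$ and the set of $r_{l-1}$-excursion starting points is contained in that of the $r_l$-excursion starting points. Since $\overline N_l\leq d$ throughout the infinite cascade, the integer sequence $\overline N_l$ must stabilize from some $l_0$ onward; at that point the two starting-point sets have the same cardinality and one contains the other, so they are equal. Consequently $H_l=:H$ and $d_l=:d^*$ stop changing, and therefore $r_l=:r$ is constant for $l\geq l_0+1$. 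Along the way I record $d^*\geq 2\gamma\varepsilon$, $r\leq \gamma^{-1} d^*$ and (since $\gamma^{-1}d^*\geq 2\varepsilon$ and $\varepsilon\in\mathcal R$) $r\geq\varepsilon$.

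Next I would use $G$ to trap the trajectories geometrically. Because $r\in[\varepsilon,1]$, event $G$ gives that each $r$-excursion of each $W^{(k)}$ has radius strictly less than $(\gamma-1)r=(\log\varepsilon)^2 r$. Combined with the fact that all starting points of these excursions lie in $H$, this means that during every excursion the trajectory is confined to $B(\overline P^{(k)}_{l,i},(\gamma-1)r)\subset B(H,(\gamma-1)r)$. Between excursions the trajectory lies outside $B(1+r)$ and thus cannot visit $B(1)$. So every visit of any $W^{(k)}$ to $B(1)$ lies in the tube $B(H,(\gamma-1)r)$.

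The final step is to exhibit an $\varepsilon$-hemiball that no $W^{(k)}$ can enter, contradicting $E$. I let $p$ denote the foot of the perpendicular from $0$ to $H$ (well-defined since $d^*>0$) and set $e=-p/\|p\|$; then for $x=h+v\in B(H,(\gamma-1)r)$ with $h\in H$ and $\|v\|\leq (\gamma-1)r$, since $\langle h,e\rangle=-d^*$,
\[
\langle x,e\rangle=-d^*+\langle v,e\rangle\leq -d^*+(\gamma-1)r\leq -d^*+\tfrac{\gamma-1}{\gamma}d^*=-\tfrac{d^*}{\gamma}\leq -2\varepsilon<-\varepsilon,
\]
so $A_{e,\varepsilon}(1)$ contains no visited point, contradicting $E$. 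The conceptual hurdle I anticipate is the stabilization step itself: recognising that monotonicity of $\overline N_l$ together with the ceiling $\overline N_l\leq d$ freezes the whole geometric picture, and that the two quantitative margins $d^*\geq 2\gamma\varepsilon$ and $\gamma r\leq d^*$ hard-coded into the stopping rule translate exactly into the strict inequality $\langle x,e\rangle<-\varepsilon$. Once that observation is made, the geometry afterwards is routine.
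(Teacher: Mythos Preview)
Your argument is correct and is essentially the paper's own proof. Both exploit that $L=+\infty$ forces some step with $\overline N_l=\overline N_{l-1}$ (you phrase this as eventual stabilization of the bounded nondecreasing sequence $\overline N_l$), so that all $r_l$-excursions start in a fixed affine space $H$; then the choice $e=-p/\|p\|$ together with the inequalities $r\leq\gamma^{-1}d^*$ and $d^*\geq 2\gamma\varepsilon$ yields an $\varepsilon$-hemiball at distance $\geq(\log\varepsilon)^2 r$ from $H$, which by $G$ cannot be reached---the paper computes this as a distance bound $d_{l-1}-\varepsilon\geq(\gamma-1)r_l$, while you equivalently compute $\langle x,e\rangle\leq -d^*/\gamma<-\varepsilon$.
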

\begin{proof}
Assume that $L=+\infty$. By Lemma~\ref{l:infinite-loop-L} there exists $l$ such that $\overline N_l=\overline N_{l-1}$. Then $H_l=H_{l-1}$; in particular, every $r_l$-excursion begins in $H_{l-1}$. 

Let $x\in H_{l-1}$ be the closest point of $H_{l-1}$ to the origin and let $e$ be the unit vector in $\R^d$ collinear with $0$ and $x$ and oriented opposite to $x$. Let $A_{e,\varepsilon}(1)$ be the corresponding $\varepsilon$-hemiball (see Definition~\ref{def:hemiball} and Figure~\ref{fig:hemiball}). Note that the distance between $A_{e,\varepsilon}(1)$ and $H_{l-1}$ is 
\[
d_{l-1}-\varepsilon \geq \gamma r_l - \varepsilon \geq (\gamma - 1)\,r_l\stackrel{\eqref{def:gamma}}=(\log\varepsilon)^2r_l,
\]
where in the second inequality we used that $r_l\in\mathcal R$ and thus $r_l\geq \varepsilon$. 

\smallskip

If event $E$ occurs, then $A_{e,\varepsilon}(1)$ has to be visited by one of the Brownian motions, hence by one of the $r_l$-excursions. However, all the $r_l$-excursions start in $H_{l-1}$, that is at distance $\geq(\log\varepsilon)^2r_l$ from $A_{e,\varepsilon}(1)$. Thus, at least one of the $r_l$-excursions must have radius $\geq(\log\varepsilon)^2r_l$, hence event $G$ does not occur. 

We have shown that $L=+\infty$ implies that either $E$ or $G$ does not occur. Thus, by Lemma~\ref{l:infinite-loop-L}, $E\cap G$ implies $L\leq d$. The proof is completed. 
\end{proof}

\subsection{Cascade of excursions II, lexicographic order}\label{sec:cascade-lexicographic}

By Lemmas~\ref{l:eventG} and \ref{l:EG-Lleqd}, to prove Theorem~\ref{thm:hitting-all-hemiballs} it suffices to estimate the probability $\mathsf P[F,G,L\leq d]$. For that, it will be important to refine the construction of Section~\ref{sec:cascade} and distinguish the starting points of $r_l$-excursions which are \emph{new} with respect to the starting points of the $r_{l-1}$-excursions.  

We define 
\[
\tau^{(k)}_{1,i} = \overline \tau^{(k)}_{1,i+1},\quad i\geq 1,
\]
and for $l\geq 2$, 
\begin{align*}
\tau^{(k)}_{l,1} &= \inf\big\{\overline\tau^{(k)}_{l,j}>0\,:\,\overline\tau^{(k)}_{l,j}\neq \overline \tau^{(k)}_{l-1,j'}\text{ for all }1\leq j'\leq \overline N^{(k)}_{l-1}\big\}\\
\tau^{(k)}_{l,i} &= \inf\big\{\overline\tau^{(k)}_{l,j}>\tau^{(k)}_{l,i-1}\,:\,\overline\tau^{(k)}_{l,j}\neq \overline \tau^{(k)}_{l-1,j'}\text{ for all }1\leq j'\leq \overline N^{(k)}_{l-1}\big\}\quad (i\geq 2),
\end{align*}
where $\inf\emptyset=+\infty$. 

We denote by $N^{(k)}_l = \sup\{i\geq 1\,:\,\tau^{(k)}_{l,i}<+\infty\}$ ($=0$ when $\tau^{(k)}_{l,1}=+\infty$) the difference between the number of $r_l$- and $r_{l-1}$-excursions for $W^{(k)}$ ($N^{(k)}_1$ is one less the number of $r_1$-excursions) and let $N_l = \sum\limits_{k=1}^K N^{(k)}_l$.

The corresponding new starting points of the $r_l$-excursions are denoted by  
\[
P^{(k)}_{l,i} = W^{(k)}_{\tau^{(k)}_{l,i}},\quad 1\leq i\leq N^{(k)}_l.
\]
For convenience, we also define $P^{(k)}_{0,1} = W^{(k)}_0$, $N^{(k)}_0 = 1$ and $N_0 = K$. 

\medskip

We would like to view the points $P^{(k)}_{l,i}$ in  a specific order. For this, we consider the index set  
\[
\mathcal I = \{(l,k,i)\,:\,l\geq 0, 1\leq k\leq K,\,1\leq i\leq N^{(k)}_l\}
\]
and denote by $\prec$ the lexicographic order on $\mathcal I$.\footnote{$(l,k,i)\prec(l',k',i')$ if (a) $l<l'$ or (b) $l=l'$, $k<k'$ or (c) $l=l'$, $k=k'$, $i<i'$.}

Let 
\[
H^{(k)}_{l,i} = \mathrm{aff}\big\{P^{(k')}_{l',i'}\,:\,(l',k',i')\preceq(l,k,i)\big\}\quad\text{and}\quad \check H^{(k)}_{l,i} =  \mathrm{aff}\big\{P^{(k')}_{l',i'}\,:\,(l',k',i')\prec(l,k,i)\big\}
\]
and denote by $d^{(k)}_{l,i}$ the Euclidean distance from $H^{(k)}_{l,i}$ to the origin.

Notice that $H^{(K)}_{l,N^{(K)}_l} = H_l$ and $d^{(K)}_{l,N^{(K)}_l} =d_l$.

There is a natural one-to-one correspondence between elements of $\mathcal I$ and $\N$ given by the map
\begin{equation}\label{def:Nkli}
N^{(k)}_{l,i} = \big|\{(l',k',i')\,:\,(l',k',i')\preceq(l,k,i)\}\big|.
\end{equation}
It will be convenient to define the following stopping rule $\tau$, which is finer than the stopping rule defined by the variable $L$ in \eqref{def:L}.
\begin{definition}\label{def:tau}
Let $\tau$ be the smallest $N^{(k)}_{l,i}(\geq 2)$, such that one of the following events occurs:
\begin{enumerate}
\item
$d\big(P^{(k)}_{l,i},\check H^{(k)}_{l,i}\big)<\varepsilon$;
\item
$d^{(k)}_{l,i}<2\gamma\varepsilon$ (recall \eqref{def:gamma} for definition of $\gamma$);
\item
$N^{(k)}_{l,i} = d+1$;
\end{enumerate}
and $\tau=+\infty$ otherwise.\footnote{$\tau=+\infty$ if the construction gets stuck in an infinite loop before any of the events (1)-(3) occurs.}
\end{definition}

By the definition \eqref{def:L} of $L$, $L\leq d$ implies that $\tau\leq d+1$. Thus, by Lemmas~\ref{l:eventG} and \ref{l:EG-Lleqd}, Theorem~\ref{thm:hitting-all-hemiballs} follows from the following lemma.
\begin{lemma}\label{l:probability:tau}
For $1\leq K\leq d$, let $W^{(1)}, \ldots, W^{(K)}$ be independent Brownian motions started from uniform points on $\partial B(1)$. Let $F$ be the event from Theorem~\ref{thm:hitting-all-hemiballs} and $G$ the event from Lemma~\ref{l:eventG}. There exist $C=C(d)$ and $m=m(d)$, such that for all $\varepsilon$,
\[
\mathsf P[F,G,\tau=t]\leq C\log^m(\tfrac1\varepsilon)\, \varepsilon^{d+1},\quad 2\leq t\leq d+1.
\]
\end{lemma}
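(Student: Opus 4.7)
The plan is to decompose the event $\{F,G,\tau=t\}$ according to the lexicographic sequence of new starting points $(P^{(k)}_{l,i})_{(l,k,i)\in\mathcal I}$ and to bound each admissible ``profile'' --- a decreasing list of scales $r_1\geq\ldots\geq r_L\in\mathcal R$ with $L\leq d$ (by Lemma~\ref{l:infinite-loop-L}) and a matrix $(N^{(k)}_l)$ of per-Brownian-motion new-excursion counts summing to $t-K$ --- by combining three estimates from the toolkit assembled so far.

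The first ingredient is the per-excursion avoidance cost from Lemma~\ref{l:BM-hitting}. Iterating the strong Markov property at the hitting times $\overline\tau^{(k)}_{l,i}$ of \eqref{def:overline-tau-eta}, a forward trip from $\partial B(1)$ to $\partial B(1+r_l)$ before entering $B(1-\varepsilon)$ costs at most $C\varepsilon/(r_l+\varepsilon)\leq C\varepsilon/r_l$, the backward trip costs at most $1$, and the final escape to infinity from $\partial B(1+r_{L_k})$ (for the deepest scale $r_{L_k}$ actually visited by $W^{(k)}$) costs at most $Cr_{L_k}$. Multiplying across all $K$ Brownian motions, the ``avoidance+excursion-count'' contribution to $\mathsf P[F,G,\tau=t,\text{profile}]$ for a fixed profile is bounded, up to constants, by $\prod_{l,k}(\varepsilon/r_l)^{N^{(k)}_l}\cdot\prod_k r_{L_k}$, with a further $\log^{O(d)}(\tfrac1\varepsilon)$ overhead from summation over the $O(\log\tfrac1\varepsilon)$ elements of $\mathcal R$ and at most $d$ nested levels.

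The second and third ingredients control the locations of the new points. Lemma~\ref{l:poisson-formula}, combined with the radius bound $(\log\varepsilon)^2 r_l$ on every $r_l$-excursion that follows from event $G$, yields that the conditional density of each $P^{(k)}_{l,i}$ on $\partial B(1)$ is bounded by $C\log^{O(1)}(\tfrac1\varepsilon)$ times the uniform density over a macroscopic subset, uniformly over admissible histories; this is precisely the role of Section~\ref{sec:aux-BM-affine-hulls}. Hence the integrals over the locations of the $t-1$ non-triggering new points cost at most a log-polynomial factor each, while at the trigger step $t$ the geometric estimates of Section~\ref{sec:aux-affine-hulls} bound the $(d-1)$-surface area of the trigger set on $\partial B(1)$: $\leq C\varepsilon^{d-t+1}$ for condition~(1) (the $\varepsilon$-tube about a $(t-2)$-dimensional affine subspace meets $\partial B(1)$ in such a set); $\leq C\gamma\varepsilon\cdot\varepsilon^{d-t+1}$ for condition~(2) outside the condition-(1) set (one extra codimension-1 constraint of width $\gamma\varepsilon$); and for condition~(3) (which forces $t=d+1$) the density bound applies directly, with no additional geometric factor.

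Putting everything together and using $K+\sum_{l,k}N^{(k)}_l=t$, the combined bound collapses to $\log^m(\tfrac1\varepsilon)\,\varepsilon^{d+1}$: schematically, the excursion costs contribute $\varepsilon^{t-K}$, the trigger area contributes $\varepsilon^{d-t+1}$, and one additional $\varepsilon$ per Brownian motion emerges from the ``final-escape'' factors $r_{L_k}$ combined with the Poisson density normalization, giving $\varepsilon^{t-K}\cdot\varepsilon^{d-t+1}\cdot\varepsilon^K=\varepsilon^{d+1}$. The main obstacle I expect is to carry out this cancellation rigorously: the $r_l^{-1}$-powers coming from the per-excursion cost and the $r_l$-powers from the final escape must match, after integrating over the return-point locations, the $r_l$-dependence of the Poisson-kernel density bounds on $\partial B(1)$. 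This is what the estimates of Sections~\ref{sec:aux-affine-hulls}--\ref{sec:aux-BM-affine-hulls} are designed to deliver, using crucially the radius control from $G$ together with the constraint $r_l\leq\gamma^{-1}d_{l-1}$ (which in particular keeps $r_l\geq\varepsilon$). A secondary challenge is to handle uniformly in $t$ both extreme cases: $t=2$ (where most of the $\varepsilon^{d+1}$ decay must come from the trigger area $\varepsilon^{d-1}$), and $t=d+1$ (where most of it is accumulated from a cascade of $d-1$ chained excursions with almost no geometric trigger factor).
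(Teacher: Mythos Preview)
Your outline has the right skeleton --- decompose by excursion profile, extract $C\varepsilon/r_l$ per new excursion via the strong Markov property and Lemma~\ref{l:BM-hitting}, then integrate over the locations of the new points using the Poisson kernel and the geometric trigger constraints --- and you correctly flag that the hard part is cancelling the $r_l^{-1}$ factors. But the mechanism you propose does not work, because two of your intermediate claims are false. First, the conditional density of a return point $P^{(k)}_{l,i}$ on $\partial B(1)$ is \emph{not} bounded by a log-polynomial multiple of the uniform density: the Poisson kernel from $q\in\partial B(1+r_l)$ has a peak of order $r_l^{-(d-1)}$, so when $r_l\ll 1$ the density is highly concentrated (event $G$ only localizes the peak to a ball of radius $\sim(\log\varepsilon)^2 r_l$). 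Hence the ``non-triggering location integrals cost only log-poly each'' is wrong, and without them you have uncancelled $\prod r_l^{-1}$ factors that blow up when some $r_l\sim\varepsilon$. Second, the trigger-area bound $\varepsilon^{d-t+1}$ for condition~(1) is only correct when $d(0,h_{t-1})\sim 1$; in general the relevant bound is $(\varepsilon/\delta_{t-1})^{d+1-t}$ where $\delta_{t-1}$ is the distance from $0$ to the affine hull of the first $t-1$ points, and $\delta_{t-1}$ can range all the way down to $\sim\gamma\varepsilon$.

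The missing idea is that the \emph{intermediate} steps also carry a geometric constraint you are not using: for each $s<t$ the distance $d(0,h_s)$ lies in a dyadic window $[\tfrac12\delta_s,\delta_s]$. Discretizing these windows and applying Lemma~\ref{l:affine-hull-hitting-1} (not just a surface-area bound) at step $s$ yields a factor $(\delta_s/\delta_{s-1})^{d+1-s}$, whose product over $s=2,\ldots,t-1$ telescopes to $\big(\prod_{s=2}^{t-1}\delta_s\big)\cdot\delta_{t-1}^{d+1-t}$. Because the cascade is designed so that $\overline\rho_s\gtrsim\gamma^{-1}\delta_{s-1}$, the first factor cancels the $\prod_s\overline\rho_s^{-1}$ from the avoidance costs up to $\gamma$-powers; the second factor combines with the trigger bound $(\varepsilon/\delta_{t-1})^{d+1-t}$ to produce $\varepsilon^{d+1-t}$ \emph{uniformly in} $\delta_{t-1}$. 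Your schematic $\varepsilon^{t-K}\cdot\varepsilon^{d-t+1}\cdot\varepsilon^K$ is the special case $\delta_s\equiv 1$ (all $r_l\sim\gamma^{-1}$); the telescoping via the $\delta_s$'s is precisely what extends the bound to all cascades, and it is the content of Lemmas~\ref{l:affine-hull-hitting-1}--\ref{l:affine-hull-hitting-3}. (A secondary technical point you omit: because the affine-hull constraints mix points from different time intervals, the paper first discretizes the locations onto a mesh of width $\overline\varepsilon=\varepsilon^{2d+2}$ and uses Lemma~\ref{l:affine-hulls-properties} to transfer the constraints; only then can the strong Markov property be applied cleanly.)
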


\subsection{Proof of Lemma~\ref{l:probability:tau}}\label{sec:probability-tau}
We first specify the triple $(\ell,\kappa,\iota)$ such that $\tau=N^{(\kappa)}_{\ell,\iota}$. By the union bound, 
\[
\mathsf P[F,G,\tau=t]\leq \sum\limits_{\ell=0}^d\sum\limits_{\kappa=1}^K\sum\limits_{\iota\geq 1}\mathsf P\big[F,G,\tau=t=N^{(\kappa)}_{\ell,\iota}\big]\leq d(d+1)^2\,\sup\limits_{\ell,\kappa,\iota}\mathsf P\big[F,G,\tau=t=N^{(\kappa)}_{\ell,\iota}\big].
\]
Thus, it suffices to bound $\mathsf P\big[F,G,\tau=t=N^{(\kappa)}_{\ell,\iota}\big]$ for each fixed triple $(\ell,\kappa,\iota)$.

\smallskip

Since the excursions in the cascade are not defined chronologically, in order to be able to apply the strong Markov property, we will decompose the event according to the number of different excursions, their ranges $r_l$, the locations of the points $P^{(k)}_{l,i}$ and the distances of respective affine hulls to the origin. 

\smallskip

Recall that $N^{(k)}_l$ denotes the difference between the number of $r_l$- and $r_{l-1}$-excursions for Brownian motion $W^{(k)}$. Given $\tau = N^{(\kappa)}_{\ell,\iota}$, we also define for each $1\leq k\leq K$, 
\[
\widetilde N^{(k)}_\ell = \big|\{(\ell,k,i)\,:\,1\leq i\leq N^{(k)}_\ell\text{ such that }(\ell,k,i)\preceq(\ell,\kappa,\iota)\}\big|.
\]
When $\ell\geq 1$, $\widetilde N^{(k)}_\ell$ is the number of new starting-points of $r_\ell$-excursions of the $k$-th Brownian motion with indices $(\ell,k,i)$ not exceeding $(\ell,\kappa,\iota)$ lexicographically. When $\ell=0$, $\widetilde N^{(k)}_\ell=1$ for the first $\tau$ Brownian motions and $=0$ for the remaining $K-\tau$ Brownian motions. 
We have 
\begin{multline*}
\mathsf P\big[F,G,\tau=t=N^{(\kappa)}_{\ell,\iota}\big]\\
\leq
\sum\limits_{\{n^{(k)}_{l,i}\}}\sum\limits_{\{\widetilde n^{(k)}_{\ell,i}\}}\mathsf P\big[F,G,\tau=t=N^{(\kappa)}_{\ell,\iota},N^{(k)}_l = n^{(k)}_l,\widetilde N^{(k)}_\ell = \widetilde n^{(k)}_\ell,\text{ for all }l,k\big]
\end{multline*}
where the sum is over all $n^{(k)}_{l,i}\in \N_0$, $\widetilde n^{(k)}_{\ell,i}\in\N_0$ satisfying $\sum\limits_{k=1}^K\big(\sum\limits_{l=0}^{\ell-1} n^{(k)}_l + \widetilde n^{(k)}_\ell\big)=t$. 

Since $0\leq \ell\leq d$, $1\leq K\leq d$ and $t\leq d+1$, the above sum is bounded by 
\[
(d+2)^{d(d+1)}\,\sup\limits_{\{n^{(k)}_{l,i}\},\{\widetilde n^{(k)}_{\ell,i}\}}\mathsf P\big[F,G,\tau=t=N^{(\kappa)}_{\ell,\iota},N^{(k)}_l = n^{(k)}_l,\widetilde N^{(k)}_\ell = \widetilde n^{(k)}_\ell,\text{ for all }l,k\big].
\]
From now on we fix $\{n^{(k)}_l\}$ and $\{\widetilde n^{(k)}_\ell\}$ and use the convention (without further mentioning) that we only consider the triples $(l,k,i)$, whose parameters satisfy $0\leq l\leq \ell$, $1\leq k\leq K$ and $1\leq i\leq n^{(k)}_l$ resp.\ $1\leq i\leq \widetilde n^{(k)}_\ell$. 

\smallskip

Next, we pin down the locations of the radii $r_l$, the points $P^{(k)}_{l,i}$ and the distances $d^{(k)}_{l,i}$ of the affine hulls to $0$. Let
\[
\overline \varepsilon = \varepsilon^{2d+2}
\]
and fix a set $\mathcal Z\subseteq \partial B(1)$ such that 
\begin{equation}\label{def:mathcal:Z}
\text{for any $x\in\partial B(1)$, $1\leq \big|\{z\in\mathcal Z\,:\, \|z-x\|<\overline\varepsilon\}\big|\leq 3^d$.}
\end{equation}
Let 
\[
\mathcal D = \big\{\tfrac1{2^k}\,:\,0\leq k\leq \log_2(\tfrac1\varepsilon)\big\}. 
\]
By the definition of $\tau$, $d^{(k)}_{l,i}\geq 2\gamma\varepsilon> \varepsilon$ for all $(l,k,i)\prec(\ell,\kappa,\iota)$, thus there exist $\delta^{(k)}_{l,i}\in\mathcal D$, such that $\tfrac12\delta^{(k)}_{l,i}< d^{(k)}_{l,i}\leq \delta^{(k)}_{l,i}$. We obtain
\begin{multline*}
\mathsf P\big[F,G,\tau=t=N^{(\kappa)}_{\ell,\iota},N^{(k)}_l = n^{(k)}_l,\widetilde N^{(k)}_\ell = \widetilde n^{(k)}_\ell,\text{ for all }l,k\big]\\
\leq 
\sum\limits_{\{\rho_l\}}\sum\limits_{\{z^{(k)}_{l,i}\}}\sum\limits_{\{\delta^{(k)}_{l,i}\}}\mathsf P\Big[\begin{array}{l}F,G,\tau=t=N^{(\kappa)}_{\ell,\iota},N^{(k)}_l = n^{(k)}_l,\widetilde N^{(k)}_\ell = \widetilde n^{(k)}_\ell, r_l=\rho_l\\ P^{(k)}_{l,i}\in B^{(k)}_{l,i}, \tfrac12\delta^{(k)}_{l,i}<d^{(k)}_{l,i}\leq \delta^{(k)}_{l,i}\text{ for all }l,k,i\end{array}
\Big],
\end{multline*}
where the sum is over all $\rho_l\in\mathcal R$ with $1\leq l\leq \ell$, $z^{(k)}_{l,i}\in\mathcal Z$ with $(l,k,i)\preceq(\ell,\kappa,\iota)$ and  
$\delta^{(k)}_{l,i}\in\mathcal D$ with $(l,k,i)\prec(\ell,\kappa,\iota)$, such that 
$\delta^{(1)}_{0,1}=1$ and $\delta^{(k)}_{l,i}\geq \delta^{(k')}_{l',i'}$ when $(l,k,i)\prec(l',k',i')$, and where we use the notation $B^{(k)}_{l,i} = B(z^{(k)}_{l,i},\overline\varepsilon)$. 
Furthermore, since $r_l\geq \tfrac12\gamma^{-1}d_{l-1}$ and $d_{l-1} = d^{(K)}_{l-1,n^{(K)}_{l-1}}$, we may assume that 
\begin{equation}\label{eq:relation:rho-delta}
\rho_l\geq \tfrac14\gamma^{-1}\delta^{(k')}_{l',i'}\quad\text{for all }1\leq l\leq \ell\text{ and }(l',k',i')\succeq(l-1,K,n^{(K)}_{l-1}).
\end{equation}

\smallskip

Since $|\mathcal R| = |\mathcal D|=\log_2(\tfrac1\varepsilon)+1$, we obtain that 
\begin{multline}\label{eq:probability-tau:fixed-rho-delta}
\mathsf P\big[F,G,\tau=t=N^{(\kappa)}_{\ell,\iota},N^{(k)}_l = n^{(k)}_l,\widetilde N^{(k)}_\ell = \widetilde n^{(k)}_\ell,\text{ for all }l,k\big]\\
\begin{aligned}
\leq\,\, 
&\big(\log_2(\tfrac1\varepsilon)+1\big)^{2(d+2)^3}\\
&\qquad\sup\limits_{\{\rho_l\}}\sup\limits_{\{\delta^{(k)}_{l,i}\}}\sum\limits_{\{z^{(k)}_{l,i}\}}\mathsf P\Big[\begin{array}{l}F,G,\tau=t=N^{(\kappa)}_{\ell,\iota},N^{(k)}_l = n^{(k)}_l,\widetilde N^{(k)}_\ell = \widetilde n^{(k)}_\ell, r_l=\rho_l\\ P^{(k)}_{l,i}\in B^{(k)}_{l,i}, \tfrac12\delta^{(k)}_{l,i}< d^{(k)}_{l,i}\leq \delta^{(k)}_{l,i}\text{ for all }l,k,i\end{array}
\Big],
\end{aligned}
\end{multline}
From now we also fix $\{\rho_l\}\in\mathcal R^\ell$ and $\{\delta^{(k)}_{l,i}\}\in\mathcal D^{t-1}$ such that $\delta^{(1)}_{0,1}=1$, $\delta^{(k)}_{l,i}\geq \delta^{(k')}_{l',i'}$ when $(l,k,i)\prec(l',k',i')$, and \eqref{eq:relation:rho-delta} holds.

\smallskip

It will be significant that the probability under the sum in \eqref{eq:probability-tau:fixed-rho-delta} is nonzero only for those $\{z^{(k)}_{l,i}\}$, which satisfy certain constraints, similar to those satisfied by the points $P^{(k)}_{l,i}$. 

Recall the definition of affine hulls $H^{(k)}_{l,i}$ and $\check H^{(k)}_{l,i}$. The points $P^{(k)}_{l,i}$ satisfy:
\begin{itemize}
\item[(a)]
$\tfrac12\delta^{(k)}_{l,i}< d(0,H^{(k)}_{l,i})\leq \delta^{(k)}_{l,i}$ for all $(l,k,i)\prec(\ell,\kappa,\iota)$;
\item[(b)]
$d(P^{(k)}_{l,i},\check H^{(k)}_{l,i})> \varepsilon$ for all $(l,k,i)\prec(\ell,\kappa,\iota)$;
\item[(c)]
if $\sum\limits_{k=1}^K\big(\sum\limits_{l=0}^{\ell-1} n^{(k)}_l + \widetilde n^{(k)}_\ell\big)\neq d+1$ then either 
$d(P^{(\kappa)}_{\ell,\iota},\check H^{(\kappa)}_{\ell,\iota})<\varepsilon$ or 
$d(0,H^{(\kappa)}_{\ell,\iota})<2\gamma\varepsilon$. 
\end{itemize}
In analogy with $H^{(k)}_{l,i}$ and $\check H^{(k)}_{l,i}$, we define the affine hulls
\[
h^{(k)}_{l,i} = \mathrm{aff}\big\{z^{(k')}_{l',i'}\,:\,(l',k',i')\preceq(l,k,i)\big\},\quad \check h^{(k)}_{l,i} = \mathrm{aff}\{z^{(k')}_{l',i'},\,(l',k',i')\prec(l,k,i)\}.
\]

\begin{definition}\label{def:mathcal:z'}
Let $\mathcal Z'$ be the set of all tuples $\{z^{(k)}_{l,i}\in\mathcal Z,\,(l,k,i)\preceq(\ell,\kappa,\iota)\}$ such that 
\begin{itemize}
\item[(a)]
$\tfrac14\delta^{(k)}_{l,i}\leq d(0,h^{(k)}_{l,i})\leq 2\delta^{(k)}_{l,i}$ for all $(l,k,i)\prec(\ell,\kappa,\iota)$;
\item[(b)]
$d(z^{(k)}_{l,i},\check h^{(k)}_{l,i})> \tfrac12\varepsilon$ for all $(l,k,i)\prec(\ell,\kappa,\iota)$;
\item[(c)]
if $\sum\limits_{k=1}^K\big(\sum\limits_{l=0}^{\ell-1} n^{(k)}_l + \widetilde n^{(k)}_\ell\big)\neq d+1$ then either 
$d(z^{(\kappa)}_{\ell,\iota},\check h^{(\kappa)}_{\ell,\iota})<2\varepsilon$ or 
$d(0,h^{(\kappa)}_{\ell,\iota})<4\gamma\varepsilon$. 
\end{itemize} 
\end{definition}
By the definition of $\tau$ and Lemma~\ref{l:affine-hulls-properties}, if $\varepsilon<\varepsilon_*=\varepsilon_*(d)>0$, then 
the probability in \eqref{eq:probability-tau:fixed-rho-delta} is nonzero only for the tuples $\{z^{(k)}_{l,i}\}$ from $\mathcal Z'$. Thus, to complete the proof of Lemma~\ref{l:probability:tau}, it suffices to prove that 
\begin{equation}\label{eq:probability-tau:sum-over-mathcalz'}
\sum\limits_{\mathcal Z'}\mathsf P\Big[\begin{array}{l}F,G,N^{(k)}_l = n^{(k)}_l,\widetilde N^{(k)}_\ell = \widetilde n^{(k)}_\ell, r_l=\rho_l\\ P^{(k)}_{l,i}\in B^{(k)}_{l,i}\text{ for all }l,k,i\end{array}
\Big]\leq C\log^m(\tfrac1\varepsilon)\, \varepsilon^{d+1}.
\end{equation}

\medskip

With all the restrictions we have imposed so far, if $\ell\geq 1$, then there can still be some ambiguity in the order in which Brownian motions realize different excursions. 
The total number of ways points $P^{(k)}_{l,i}$ can be visited by $W^{(k)}$ in order is at most the number of permutations $\big(\sum\limits_{l=0}^{\ell-1}n^{(k)}_l + \widetilde n^{(k)}_\ell\big)!$, which is bounded from above by $(d+1)!$. Thus, the total number of possible cascades (for fixed $n^{(k)}_l$'s and $\widetilde n^{(k)}_\ell$'s) is at most $((d+1)!)^d$.

\smallskip

In fact, the order in which the points $P^{(k)}_{l,i}$ are visited by $W^{(k)}$ uniquely specifies the order in which $W^{(k)}$ realizes different excursions. (This is not entirely trivial, since $P^{(k)}_{l,i}$ is generally not a starting point of a $r_l$-excursion after $\ell$ iterations, but of a $r_{l'}$-excursion for some $l\leq l'\leq \ell$.)
To see this it is more convenient to work with the end-points of excursions. 

\smallskip

Recall the definition of $\overline\tau^{(k)}_{l,i}$ and $\overline\eta^{(k)}_{l,i}$ from \eqref{def:overline-tau-eta}. For $(l,k,i)\preceq (\ell,\kappa,\iota)$ with $l\geq 1$, let $j$ ($\geq 2$) be such that $P^{(k)}_{l,i} = \overline P^{(k)}_{l,j} = W^{(k)}_{\overline\tau^{(k)}_{l,j}}$. We denote the end-point of the $r_l$-excursion starting from $\overline P^{(k)}_{l,j-1}$ as 
\[
Q^{(k)}_{l,i} = W^{(k)}_{\overline\eta^{(k)}_{l,j-1}}\footnote{Note that after $\ell$ iterations $Q^{(k)}_{l,i}$ is still the end-point of a $r_l$-excursion, but generally with a different starting point $P^{(k)}_{l',i'}$ for $l'\geq l$.}
\] 
and, for a given tuple $\{z^{(k')}_{l',i'}\}$ from $\mathcal Z'$, consider the sets
\[
\mathbb Q^{(k)}_{l,i} = \big\{q\in \partial B(1+\rho_l)\,:\,
\|q-z^{(k')}_{l',i'}\|<2(\log\varepsilon)^2\rho_l\text{ for some }(l',k',i')\prec(l,k,i)\big\}.
\]
\begin{lemma}\label{l:properties-Qkli}
For every $(l,k,i)\preceq (\ell,\kappa,\iota)$ with $l\geq 1$, the following properties hold:
\begin{enumerate}
\item
after $Q^{(k)}_{l,i}$, the Brownian motion $W^{(k)}$ enters $B(1)$ through $P^{(k)}_{l,i}$, furthermore, if $l>1$, then it enters $B(1)$ before leaving $B(1+r_{l-1})$;
\item
if the event in \eqref{eq:probability-tau:sum-over-mathcalz'} occurs, then $Q^{(k)}_{l,i}\in \mathbb Q^{(k)}_{l,i}$.
\end{enumerate}
\end{lemma}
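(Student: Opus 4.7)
Fix $(l,k,i)\preceq(\ell,\kappa,\iota)$ with $l\geq 1$, and let $j\geq 2$ be the index with $P^{(k)}_{l,i}=\overline P^{(k)}_{l,j}$, so that $Q^{(k)}_{l,i}=W^{(k)}_{\overline\eta^{(k)}_{l,j-1}}$. The first assertion in (1) is an immediate restatement of the definition \eqref{def:overline-tau-eta}: $\overline\tau^{(k)}_{l,j}$ is by construction the first time after $\overline\eta^{(k)}_{l,j-1}$ at which $W^{(k)}$ reaches $\partial B(1)$, so $W^{(k)}$ re-enters $B(1)$ precisely at $\overline P^{(k)}_{l,j}=P^{(k)}_{l,i}$. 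For the second assertion, assume $l\geq 2$ and argue by contradiction: were $W^{(k)}$ to visit $\partial B(1+r_{l-1})$ at some time $s\in(\overline\eta^{(k)}_{l,j-1},\overline\tau^{(k)}_{l,j})$, then by the standard alternation of the level-$(l-1)$ cascade (the next entry into $\partial B(1)$ after any exit of $B(1+r_{l-1})$ is the next level-$(l-1)$ entry time) that next entry would be some $\overline\tau^{(k)}_{l-1,j'}$; but it must also coincide with $\overline\tau^{(k)}_{l,j}$, contradicting the fact that $P^{(k)}_{l,i}$ is a \emph{new} starting point, i.e.\ $\overline\tau^{(k)}_{l,j}\notin\{\overline\tau^{(k)}_{l-1,j'}\}_{j'}$.

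For assertion (2), the strategy is first to identify the starting point $\overline P^{(k)}_{l,j-1}$ of the $r_l$-excursion that terminates at $Q^{(k)}_{l,i}$ with some $P^{(k)}_{l'',i''}$ satisfying $(l'',k,i'')\prec(l,k,i)$, and then to quantify the displacement during this excursion using event $G$. The identification proceeds by a short downward induction on the level: since $r_{l-1}\geq r_l$ forces every level-$(l-1)$ entry time to be a level-$l$ entry time, $\overline P^{(k)}_{l,j-1}$ is either itself a new level-$l$ point $P^{(k)}_{l,i^\ast}$ with $i^\ast<i$, or it equals $\overline P^{(k)}_{l-1,j''}$ for some $j''$, in which case I iterate; the recursion terminates at a new point $P^{(k)}_{l'',i''}$ with $l''<l$, or at $P^{(k)}_{0,1}=W^{(k)}_0$ when $\overline\tau^{(k)}_{l,j-1}=0$, and in all cases $(l'',k,i'')\prec(l,k,i)$. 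The event in \eqref{eq:probability-tau:sum-over-mathcalz'} gives $\|P^{(k)}_{l'',i''}-z^{(k)}_{l'',i''}\|<\overline\varepsilon$, while event $G$ (together with $r_l=\rho_l\in[\varepsilon,1]$) bounds the radius of the $r_l$-excursion by $(\log\varepsilon)^2\rho_l$, so by the triangle inequality
\[
\|Q^{(k)}_{l,i}-z^{(k)}_{l'',i''}\|\leq \|Q^{(k)}_{l,i}-\overline P^{(k)}_{l,j-1}\|+\|P^{(k)}_{l'',i''}-z^{(k)}_{l'',i''}\|\leq (\log\varepsilon)^2\rho_l+\overline\varepsilon.
\]
Since $\overline\varepsilon=\varepsilon^{2d+2}$ while $\rho_l\geq\varepsilon$, the right-hand side is at most $2(\log\varepsilon)^2\rho_l$ for $\varepsilon$ sufficiently small, which places $Q^{(k)}_{l,i}\in\mathbb Q^{(k)}_{l,i}$ as required.

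Neither step is deep; the only care required is the bookkeeping for the inductive identification in (2), because as finer excursions at level $l$ split coarser excursions at level $l-1$, the same spatial point can carry several labels $(l',k,i')$ across levels, and one must verify that the re-labelling one eventually lands on is strictly lexicographically below $(l,k,i)$. Beyond this, the argument is a direct unfolding of the cascade definitions combined with the a priori excursion-radius bound supplied by event $G$.
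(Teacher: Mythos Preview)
Your proof is correct and follows essentially the same approach as the paper. The paper is terse—asserting in (2) simply that $\overline P^{(k)}_{l,j-1}=P^{(k)}_{l',i'}$ for some $(l',k,i')\prec(l,k,i)$ without justification—whereas you make the downward induction on the level explicit; this extra bookkeeping is exactly what is needed and your handling of it (including the base case $\overline\tau^{(k)}_{l,j-1}=0$ landing on $P^{(k)}_{0,1}$) is correct.
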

\begin{proof}
While the first statement in (1) follows from the definition of $Q^{(k)}_{l,i}$, the second statement follows from the definition of $P^{(k)}_{l,i}$, since it is a revisit point of a $r_l$-excursion but not of  a $r_{l-1}$-excursion.

\smallskip

To prove claim (2), let $Q^{(k)}_{l,i} = W^{(k)}_{\overline\eta^{(k)}_{l,j-1}}$. Note that $ \overline P^{(k)}_{l,j-1} = W^{(k)}_{\overline\tau^{(k)}_{l,j-1}} = P^{(k)}_{l',i'}$, for some $(l',k,i')\prec(l,k,i)$. By assumption, $\|P^{(k)}_{l',i'} - z^{(k)}_{l',i'}\|<\overline \varepsilon$. Thus, 
\[
\|Q^{(k)}_{l,i} - z^{(k)}_{l',i'}\|\leq 
\|Q^{(k)}_{l,i}- P^{(k)}_{l',i'}\| 
+
\|P^{(k)}_{l',i'}- z^{(k)}_{l',i'}\|
\leq (\log\varepsilon)^2\rho_l + \overline\varepsilon < 2(\log\varepsilon)^2\rho_l,
\]
where the third inequality holds by the definition of event $G$.
This proves claim (2).
\end{proof}

\smallskip

By Lemma~\ref{l:properties-Qkli} (1), the order in which the points $P^{(k)}_{l,i}$ are visited by $W^{(k)}$ uniquely specifies the order in which $W^{(k)}$ realizes different excursions. (Indeed, the excursion that ends in $Q^{(k)}_{l,i}$ is always a $r_l$-excursion.)
For $1\leq k\leq K$, let $a^{(k)}$ be an arbitrary fixed permutation of triples 
$\{(l',k',i')\preceq(\ell,\kappa,\iota)\,:\,l'\geq 1,\ k'=k\}$. 
(It is possible that the set is empty, e.g.\ when $\ell=0$ it is so for all $k\leq t$; if so, we write $a^{(k)}=\emptyset$.) 
We write $W^{(k)}\in a^{(k)}$ if the chronological order of the points $P^{(k)}_{l,i}$ is as in the vector $a^{(k)}$. (When $a^{(k)}=\emptyset$, it means that the Brownian motion $W^{(k)}$ does not contribute any excursion to the cascade, that is after leaving $B(1+\rho_\ell)$ it never returns to $B(1)$.)  
For any (admissible) vectors $a^{(1)},\ldots, a^{(K)}$, by repeatedly applying the strong Markov property at successive revisits of $\partial B(1)$ resp.\ exits from $B(1+\rho_l)$ (and by replacing the random locations of the Brownian motions at the stopping times by the suprema over the admissible ranges of locations) and using Lemma~\ref{l:properties-Qkli}, we obtain that for any $\ell\geq 1$,  
\begin{multline*}
\mathsf P\Big[\begin{array}{l}F,G,N^{(k)}_l = n^{(k)}_l,\widetilde N^{(k)}_\ell = \widetilde n^{(k)}_\ell, r_l=\rho_l\\ P^{(k)}_{l,i}\in B^{(k)}_{l,i}, W^{(k)}\in a^{(k)}\text{ for all }l,k,i\end{array}
\Big]\\
\begin{aligned}
\leq 
&\Big(\prod\limits_{k=1}^K \mathsf P[W_0^{(k)}\in B^{(k)}_{0,1}]\Big)\,\,\Big(\sup\limits_{p\in\partial B(1)}\mathsf P_p[T_{1-\varepsilon}=+\infty]\Big)^K\\
&\quad\prod\limits_{k=1}^K \Big(\prod\limits_{l=1}^{\ell-1}\big(\sup\limits_{p\in\partial B(1)}\mathsf P_p[T_{1+\rho_l}<T_{1-\varepsilon}]\big)^{n^{(k)}_l}\,\big(\sup\limits_{p\in\partial B(1)}\mathsf P_p[T_{1+\rho_\ell}<T_{1-\varepsilon}]\big)^{\widetilde n^{(k)}_\ell}\Big)\\
&\qquad\prod\limits_{\substack{(l,k,i)\preceq (\ell,\kappa,\iota)\\l\geq 1}}\sup\limits_{q\in \mathbb Q^{(k)}_{l,i}} \mathsf P_q[W_{T_1}\in B^{(k)}_{l,i}].
\end{aligned}
\end{multline*}
By Lemma~\ref{l:BM-hitting}, 
for $p\in\partial B(1)$ and $\varepsilon, \rho>0$, $\mathsf P_p[T_{1+\rho}<T_{1-\varepsilon}]\leq C\frac{\varepsilon}{\rho}$ and $\mathsf P_p[T_{1-\varepsilon}=+\infty]\leq C\varepsilon$. Thus, if we define 
\[
n_l = \sum\limits_{k=1}^K n^{(k)}_l\quad\text{resp.}\quad \widetilde n_\ell = \sum\limits_{k=1}^K \widetilde n^{(k)}_\ell \quad\big(\text{note that $K+\sum\limits_{l=1}^{\ell-1}n_l + \widetilde n_\ell = t$}\big),
\]
then the above probability is bounded from above by 
\begin{multline*}
\Big(\prod\limits_{l=1}^{\ell-1}\big(\frac{C\varepsilon}{\rho_l}\big)^{n_l}\Big)\,\big(\frac{C\varepsilon}{\rho_\ell}\big)^{\widetilde n_\ell}\,(C\varepsilon)^K\,
\Big(\prod\limits_{k=1}^K \mathsf P[W_0^{(k)}\in B^{(k)}_{0,1}]\Big)\,\,\prod\limits_{\substack{(l,k,i)\preceq (\ell,\kappa,\iota)\\l\geq 1}}\sup\limits_{q\in \mathbb Q^{(k)}_{l,i}} \mathsf P_q[W_{T_1}\in B^{(k)}_{l,i}]\\
=
(C\varepsilon)^t\,\big(\prod\limits_{l=1}^{\ell-1}\rho_l^{-n_l}\big)\,\rho_\ell^{-\widetilde n_\ell}\,\,\Big(\prod\limits_{k=1}^K \mathsf P[W_0^{(k)}\in B^{(k)}_{0,1}]\Big)\,\,\prod\limits_{\substack{(l,k,i)\preceq (\ell,\kappa,\iota)\\l\geq 1}}\sup\limits_{q\in \mathbb Q^{(k)}_{l,i}} \mathsf P_q[W_{T_1}\in B^{(k)}_{l,i}].
\end{multline*}

\smallskip

When $\ell=0$ (recall that this is equivalent to $t\leq K$), there is no need to specify the order of excursions and the inequality is much simplier: By Lemma~\ref{l:BM-hitting},
\begin{multline*}
\mathsf P\Big[F,G,\tau=t,P^{(k)}_{0,1}\in B^{(k)}_{0,1} \text{ for all }k\leq t\Big]\\
\leq \Big(\prod\limits_{k=1}^t \mathsf P[W_0^{(k)}\in B^{(k)}_{0,1}]\Big)\,\Big(\sup\limits_{p\in\partial B(1)}\mathsf P_p[T_{1-\varepsilon}=+\infty]\Big)^t
\leq (C\varepsilon)^t\,\,\Big(\prod\limits_{k=1}^{K\wedge t} \mathsf P[W_0^{(k)}\in B^{(k)}_{0,1}]\Big).
\end{multline*}

If we define 
\[
\rho_0=1, 
\]
then this bound has the same form as the bound obtained in the case $\ell\geq 1$. Thus, by summing over all admissible tuples $(a^{(1)},\ldots, a^{(K)})$, we obtain that the sum of probabilities in \eqref{eq:probability-tau:sum-over-mathcalz'} is bounded from above by 
\[
((d+1)!)^d\,(C\varepsilon)^t\,\big(\prod\limits_{l=1}^{\ell-1}\rho_l^{-n_l}\big)\,\rho_\ell^{-\widetilde n_\ell}\,
\sum\limits_{\mathcal Z'}\Big(\prod\limits_{k=1}^{K\wedge t} \mathsf P[W_0^{(k)}\in B^{(k)}_{0,1}]\,
\prod\limits_{\substack{(l,k,i)\preceq (\ell,\kappa,\iota)\\l\geq 1}}\sup\limits_{q\in \mathbb Q^{(k)}_{l,i}}\mathsf P_q[W_{T_1}\in B^{(k)}_{l,i}]\Big).
\]
Although it is not essential, we rewrite the probabilities involving the (uniformly distributed) starting points of the Brownian motions in terms of probabilities involving entrance points. This will allow us to estimate them in a unified way using Lemmas~\ref{l:affine-hull-hitting-1}, \ref{l:affine-hull-hitting-2} and \ref{l:affine-hull-hitting-3}.
In order to do this, we extend the definition of $\mathbb Q^{(k)}_{l,i}$ to $l=0$ as (recall that $\rho_0=1$)
\[
\mathbb Q^{(k)}_{0,1} = \big\{q\in \partial B(1+\rho_0)\,:\,
\|q-z^{(k')}_{0,1}\|<2(\log\varepsilon)^2\rho_0 \text{ for some }k'<k)\big\}, \quad 2\leq k\leq K\wedge t.
\]

By the Poisson formula (Lemma~\ref{l:poisson-formula}),
\[
\mathsf P[W_0^{(k)}\in B^{(k)}_{0,1}] = \frac{|B^{(k)}_{0,1}\cap\partial B(1)|}{|\partial B(1)|} \leq  C\sup\limits_{q\in \mathbb Q^{(k)}_{0,1}}\mathsf P_q[W_{T_1}\in B^{(k)}_{0,1}].
\]
This allows us to rewrite the last bound as 
\[
C\varepsilon^t\,\big(\prod\limits_{l=1}^{\ell-1}\rho_l^{-n_l}\big)\,\rho_\ell^{-\widetilde n_\ell}\,
\sum\limits_{\mathcal Z'}\Big(\mathsf P[W^{(1)}_0\in B^{(1)}_{0,1}]
\prod\limits_{\substack{(l,k,i)\preceq (\ell,\kappa,\iota)\\ (l,k,i)\neq (0,1,0)}}\sup\limits_{q\in \mathbb Q^{(k)}_{l,i}} \mathsf P_q[W_{T_1}\in B^{(k)}_{l,i}]\Big)
\]
for some $C=C(d)$.

\smallskip

Recall that there is a one-to-one correspondence between the triples $(l,k,i)\preceq(\ell,\kappa,\iota)$ (ordered lexicographically) and the numbers $\{1,\ldots, t\}$, given by the map $N^{(k)}_{l,i} = s$; cf.\ \eqref{def:Nkli}.
We relabel the tuples from $\mathcal Z'$ as $(z_1,\ldots, z_t)$, where $z_s$ stays for $z^{(k)}_{l,i}$ with $N^{(k)}_{l,i} = s$. We define $h_s = \mathrm{aff}\{z_1,\ldots, z_s\}$.
Similarly, we define $\delta_s = \delta^{(k)}_{l,i}$, when $N^{(k)}_{l,i} = s$. 
By the definition of $\mathcal Z'$, (a) $\tfrac14\delta_s\leq d(0,h_s)\leq 2\delta_s$ for all $s<t$ and (b) $d(z_s,h_{s-1})>\tfrac12\varepsilon$ for all $s<t$ and (c) if $t\neq d+1$, then either $d(z_t,h_{t-1})<2\varepsilon$ or $d(0,h_t)<4\gamma\varepsilon$. 
When $\ell=0$, we define $\overline\rho_s=1$ for $s\leq t$, and when $\ell\geq 1$, we define 
\[
\overline\rho_s = \left\{\begin{array}{ll}1 & s\leq K;\\[4pt] \rho_l & K+\sum\limits_{j=1}^{l-1}n_j< s\leq K+\sum\limits_{j=1}^l n_j\text{ for }1\leq l\leq \ell-1;\\[4pt] \rho_\ell & K+\sum\limits_{j=1}^{\ell-1} n_j <s \leq t.\end{array}\right.
\]
At last, for $s\geq 2$, let $\mathbb Q_s = \{q\in\partial B(1+\overline \rho_s)\,:\,\|q-z_{s'}\|<2(\log\varepsilon)^2\overline\rho_s\text{ for some }s'<s\}$.

\smallskip

With the new notation, we can rewrite the above bound as 
\begin{multline*}
C\varepsilon^t\,\Big(\prod\limits_{s=1}^t(\overline\rho_s)^{-1}\Big)
\sum\limits_{\mathcal Z'}\Big(\mathsf P[W^{(1)}_0\in B(z_1,\overline\varepsilon)]\,
\prod\limits_{s=2}^t \sup\limits_{q\in\mathbb Q_s}
\mathsf P_q[W_{T_1}\in B(z_s,\overline\varepsilon)]\Big)\\
=
C\varepsilon^t\,\Big(\prod\limits_{s=1}^t(\overline\rho_s)^{-1}\Big)
\sum\limits_{z_1\in \mathcal Z}\mathsf P[W^{(1)}_0\in B(z_1,\overline\varepsilon)]\,
\sum\limits_{z_2}\sup\limits_{q\in\mathbb Q_2} \mathsf P_q[W_{T_1}\in B(z_2,\overline\varepsilon)]\\
\ldots
\sum\limits_{z_t}\sup\limits_{q\in\mathbb Q_t} \mathsf P_q[W_{T_1}\in B(z_t,\overline\varepsilon)],
\end{multline*}
where $\sum\limits_{z_s}$ is the sum over (a) $z_s\in\mathcal Z$ such that $\tfrac14\delta_s\leq d(0,h_s)\leq 2\delta_s$ and $d(z_s,h_{s-1})>\tfrac12\varepsilon$ for given $(z_1,\ldots, z_{s-1})$, when $2\leq s<t$, (b) $z_t\in\mathcal Z$ for given $(z_1,\ldots, z_{t-1})$, when $t=d+1$, and (c) $z_t\in\mathcal Z$ such that either $d(z_t,h_{t-1})<2\varepsilon$ or $d(0,h_t)<4\gamma\varepsilon$ for given $(z_1,\ldots, z_{t-1})$, when $t<d+1$. Note that for $2\leq s\leq t$, 
\begin{multline*}
\sum\limits_{z_s}\sup\limits_{q\in\mathbb Q_s} \mathsf P_q[W_{T_1}\in B(z_s,\overline\varepsilon)]
\leq \sum\limits_{z_s}\sum\limits_{s'<s}\sup\limits_{\substack{q\in\partial B(1+\overline\rho_s)\\ \|q-z_{s'}\|\leq 2(\log\varepsilon)^2\overline\rho_s}} \mathsf P_q[W_{T_1}\in B(z_s,\overline\varepsilon)]\\
\begin{aligned}
\leq 
&\,\,d\,\max\limits_{s'<s}\Big(\sum\limits_{z_s}\sup\limits_{\substack{q\in\partial B(1+\overline\rho_s)\\\|q-z_{s'}\|\leq 2(\log\varepsilon)^2\overline\rho_s}} \mathsf P_q[W_{T_1}\in B(z_s,\overline\varepsilon)]\Big)\\
\leq
&\,\,d \sup\limits_{h\in \mathcal H_{s-1}}\sup\limits_{y\in h\cap\partial B(1)}
\Big(\sum\limits_{z\in\mathcal Z_{h,s}}
\sup\limits_{\substack{q\in\partial B(1+\overline\rho_s)\\\|q-y\|\leq 2(\log\varepsilon)^2\overline\rho_s}} \mathsf P_q[W_{T_1}\in B(z,\overline\varepsilon)]\Big),
\end{aligned}
\end{multline*}
where for $s'<t$, $\mathcal H_{s'}$ is the set of $(s'-1)$-dimensional affine spaces $h$ intersecting $\partial B(1)$ such that 
$\tfrac14\delta_{s'}\leq d(0,h)\leq 2\delta_{s'}$ (note that $h_{s-1}\in\mathcal H_{s-1}$ for every $2\leq s\leq t$, by conditions (a) and (b) from the definition of $\mathcal Z'$), $\mathcal Z_{h,s}=\{z\in\mathcal Z\,:\,d(0,\mathrm{aff}\{z,h\})\leq 2\delta_s\}$ for $s<t$, $\mathcal Z_{h,t} = \mathcal Z$ when $t=d+1$, and $\mathcal Z_{h,t} = \{z\in\mathcal Z\,:\,d(z,h)<2\varepsilon\text{ or }d(0,\mathrm{aff}\{z,h\})<4\gamma\varepsilon\}$ when $t<d+1$. Note that the final estimate does not depend on $z_1,\ldots, z_{s-1}$ anymore. 
Furthermore, $\sum\limits_{z_1\in \mathcal Z}\mathsf P[W^{(1)}_0\in B(z_1,\overline\varepsilon)]\leq 3^d$ by the definition of $\mathcal Z$ (see  \eqref{def:mathcal:Z}). All in all, we obtain that the sum of probabilities in \eqref{eq:probability-tau:sum-over-mathcalz'} is bounded from above by 
\begin{equation}\label{eq:probability-tau:final}
C\varepsilon^t\,\Big(\prod\limits_{s=1}^t(\overline\rho_s)^{-1}\Big)\,
\prod\limits_{s=2}^t\sup\limits_{h\in \mathcal H_{s-1}}\sup\limits_{y\in h\cap\partial B(1)}\Big(\sum\limits_{z\in\mathcal Z_{h,s}}\sup\limits_{\substack{q\in\partial B(1+\overline\rho_s)\\\|q-y\|\leq 2(\log\varepsilon)^2\overline\rho_s}} \mathsf P_q[W_{T_1}\in B(z,\overline\varepsilon)]\Big)
\end{equation}
for some $C=C(d)$. Thus, it suffices to prove that \eqref{eq:probability-tau:final} is bounded from above by $C\log^m\big(\tfrac1\varepsilon\big)\varepsilon^{d+1}$ for some $C=C(d)$ and $m=m(d)$. We estimate the sums over $\mathcal Z_{h,s}$ in \eqref{eq:probability-tau:final} using Lemmas~\ref{l:affine-hull-hitting-1}, \ref{l:affine-hull-hitting-2} and \ref{l:affine-hull-hitting-3}.

By Lemma~\ref{l:affine-hull-hitting-1} (and recalling that $\delta_1=1$),
\begin{multline*}
\prod\limits_{s=2}^{t-1}\sup\limits_{h\in \mathcal H_{s-1}}\sup\limits_{y\in h\cap\partial B(1)}\Big(\sum\limits_{z\in\mathcal Z_{h,s}}\sup\limits_{\substack{q\in\partial B(1+\overline\rho_s)\\\|q-y\|\leq 2(\log\varepsilon)^2\overline\rho_s}} \mathsf P_q[W_{T_1}\in B(z,\overline\varepsilon)]\Big)\\
\leq 
\prod\limits_{s=2}^{t-1}\Big(C(\log\varepsilon)^{2(d-1)}\big(\tfrac {\delta_s}{\delta_{s-1}}\big)^{d-1-(s-2)}\Big)
\leq 
C^d(\log\varepsilon)^{2d^2}\Big(\prod\limits_{s=1}^{t-1}\delta_s\Big)\,\big(\delta_{t-1}\big)^{d+1-t}.
\end{multline*}
By \eqref{eq:relation:rho-delta}, $\rho_l\geq \tfrac14\gamma^{-1}\delta_s$ for all $s\geq K+ \sum\limits_{j=1}^{l-1}n_j$. Thus, $\overline\rho_s\geq \tfrac14\gamma^{-1}\delta_{s-1}$ for all $s$. Recalling that $\overline\rho_1=1$, the above expression is at most 
\[
C^d(\log\varepsilon)^{2d^2}(4\gamma)^d\Big(\prod\limits_{s=1}^t\overline\rho_s\Big)\,\big(\delta_{t-1}\big)^{d+1-t} \stackrel{\eqref{def:gamma}}\leq 
(8C)^d(\log\varepsilon)^{2d(d+1)}\Big(\prod\limits_{s=1}^t\overline\rho_s\Big)\,\big(\delta_{t-1}\big)^{d+1-t}.
\]
Thus, to prove that $\eqref{eq:probability-tau:final}\leq C\log^m\big(\tfrac1\varepsilon\big)\varepsilon^{d+1}$, if suffices to show that 
\begin{equation}\label{eq:probability-tau:final:2}
\sup\limits_{h\in \mathcal H_{t-1}}\sup\limits_{y\in h\cap\partial B(1)}\Big(\sum\limits_{z\in\mathcal Z_{h,t}}\sup\limits_{\substack{q\in\partial B(1+\overline\rho_t)\\\|q-y\|\leq 2(\log\varepsilon)^2\overline\rho_t}} \mathsf P_q[W_{T_1}\in B(z,\overline\varepsilon)]\Big)
\leq C\log^m\big(\tfrac1\varepsilon\big)\big(\tfrac{\varepsilon}{\delta_{t-1}}\big)^{d+1-t}
\end{equation}
for some $C=C(d)$ and $m=m(d)$.
We consider separately three cases, which correspond to three stopping rules in the definition of $\tau$ (Definition~\ref{def:tau}) resp.\ the three conditions in the definition of $\mathcal Z_{h,t}$.

If $t=d+1$, then by Lemma~\ref{l:affine-hull-hitting-3}, 
\begin{multline*}
\sup\limits_{h\in \mathcal H_{t-1}}\sup\limits_{y\in h\cap\partial B(1)}\Big(\sum\limits_{z\in\mathcal Z_{h,t}}\sup\limits_{\substack{q\in\partial B(1+\overline\rho_t)\\\|q-y\|\leq 2(\log\varepsilon)^2\overline\rho_t}} \mathsf P_q[W_{T_1}\in B(z,\overline\varepsilon)]\Big)\\
\begin{aligned}
= &\,\,\sup\limits_{y\in \partial B(1)}\Big(\sum\limits_{z\in\mathcal Z}\sup\limits_{\substack{q\in\partial B(1+\overline\rho_t)\\\|q-y\|\leq 2(\log\varepsilon)^2\overline\rho_t}} \mathsf P_q[W_{T_1}\in B(z,\overline\varepsilon)]\Big)\\
\leq &\,\,C(\log\varepsilon)^{2(d-1)} \stackrel{(t=d+1)}= C(\log\varepsilon)^{2(d-1)} \big(\tfrac{\varepsilon}{\delta_{t-1}}\big)^{d+1-t},
\end{aligned}
\end{multline*}
hence \eqref{eq:probability-tau:final:2} holds e.g.\ with $m=2(d-1)$.

\smallskip

If $t\neq d+1$, then by Lemma~\ref{l:affine-hull-hitting-1}, 
\begin{multline*}
\sup\limits_{h\in \mathcal H_{t-1}}\sup\limits_{y\in h\cap\partial B(1)}\Big(\sum\limits_{z\in\mathcal Z\,:\,d(0,\mathrm{aff}\{z,h\})\leq 4\gamma\varepsilon}\sup\limits_{\substack{q\in\partial B(1+\overline\rho_t)\\\|q-y\|\leq 2(\log\varepsilon)^2\overline\rho_t}} \mathsf P_q[W_{T_1}\in B(z,\overline\varepsilon)]\Big)\\
\leq 
C(\log\varepsilon)^{2(d-1)}\big(\tfrac {4\gamma\varepsilon}{\delta_{t-1}}\big)^{d-1-(t-2)}\stackrel{\eqref{def:gamma}}\leq
8^dC(\log\varepsilon)^{4(d-1)}\big(\tfrac {\varepsilon}{\delta_{t-1}}\big)^{d+1-t}
\end{multline*}
and by Lemma~\ref{l:affine-hull-hitting-2}, 
\begin{multline*}
\sup\limits_{h\in \mathcal H_{t-1}}\sup\limits_{y\in h\cap\partial B(1)}\Big(\sum\limits_{z\in\mathcal Z\,:\,d(z,h)\leq 2\varepsilon}\sup\limits_{\substack{q\in\partial B(1+\overline\rho_t)\\\|q-y\|\leq 2(\log\varepsilon)^2\overline\rho_t}} \mathsf P_q[W_{T_1}\in B(z,\overline\varepsilon)]\Big)\\
\begin{aligned}
\leq 
C(\log\varepsilon)^{2(t-3)^+}\big(\tfrac{2\varepsilon}{\overline\rho_t}\big)^{(d-1)-(t-3)^+}
\stackrel{(*)}\leq 
&\,\,C(\log\varepsilon)^{2d}\,\big(\tfrac{8\gamma\varepsilon}{\delta_{t-1}}\big)^{d+1-t}\,
\big(\tfrac{2\varepsilon}{\overline\rho_t}\big)^{(t-2)-(t-3)^+}\\
\stackrel{(**)}\leq 
&\,\,32^dC(\log\varepsilon)^{4d}\,\big(\tfrac {\varepsilon}{\delta_{t-1}}\big)^{d+1-t},
\end{aligned}
\end{multline*}
where $(*)$ follows from $\overline\rho_t\geq \tfrac14\gamma^{-1}\delta_{t-1}$ and 
$(**)$ from \eqref{def:gamma} and $\overline\rho_t\geq \varepsilon$ (and $t\geq 2$). Hence, \eqref{eq:probability-tau:final:2} holds also in the case $t\neq d+1$ e.g.\ with $m=4d$. 

\smallskip

The proof of \eqref{eq:probability-tau:final:2} and hence of Lemma~\ref{l:probability:tau} is completed. \qed

\subsection{Small perturbations of affine hulls}\label{sec:aux-affine-hulls}

In this section we collect some deterministic results on small perturbations of affine hulls, which are essential for the reduction to \eqref{eq:probability-tau:sum-over-mathcalz'} in the proof of Lemma~\ref{l:probability:tau}. We begin with a general result in Proposition~\ref{prop:distance-affine-hull} and collect specific applications that we need for \eqref{eq:probability-tau:sum-over-mathcalz'} in Lemma~\ref{l:affine-hulls-properties}.

\smallskip

The \emph{affine hull} of a set $S\subseteq \R^d$, denoted by $\mathrm{aff}\{S\}$, is the smallest affine subspace of $\R^d$ containing $S$.\footnote{The affine hull of a set of points $x_0,\ldots, x_n\in\R^d$ can be defined as
\[
\mathrm{aff}\{x_0,\ldots, x_n\} = \Big\{\sum\limits_{i=0}^n\alpha_ix_i\,:\,\alpha_0,\ldots, \alpha_n\in\R^d,\,\sum\limits_{i=0}^n\alpha_i=1\Big\}
\]}
\begin{proposition}\label{prop:distance-affine-hull}
Let $R<\infty$ and $p>0$. There exists $C=C(d,R,p)$, such that for all $\delta\in(0,1)$, $n\in\{1,\ldots, d\}$, $x_0,\ldots, x_n\in\R^d$ and $y_0,\ldots, y_n\in\R^d$, if 
\begin{itemize}
 \item[(a)]
 $\|x_i\|\leq R$ for all $0\leq i\leq n$; 
 \item[(b)]
 $d(x_i,\mathrm{aff}\{x_0,\ldots, x_{i-1}\})>\delta$ for all $1\leq i\leq n$;
 \item[(c)]
 $\|x_i-y_i\|<R\delta^{2d+p}$ for all $0\leq i\leq n$,
 \end{itemize}
then 
\begin{equation}\label{eq:distance-affine-hull-xy}
\big|d(x_i,\mathrm{aff}\{x_0,\ldots, x_{i-1}\}) - d(y_i,\mathrm{aff}\{y_0,\ldots, y_{i-1}\})\big|
\leq C\delta^p
\end{equation}
for all $1\leq i\leq n$.
\end{proposition}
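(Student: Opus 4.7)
I would express the distances in terms of Gram determinants and exploit their polynomial structure. Let $G_j^x$ denote the $j\times j$ Gram matrix of the vectors $x_1-x_0,\dots,x_j-x_0$, with the convention $\det G_0^x=1$, and define $G_j^y$ analogously. The classical formula
\[
d\bigl(x_i,\mathrm{aff}\{x_0,\dots,x_{i-1}\}\bigr)^2 \;=\; \frac{\det G_i^x}{\det G_{i-1}^x}
\]
(and the analogue for $y$) is my starting point. Since $\det G_j^x$ is a polynomial of degree $2j\le 2d$ in the coordinates of $x_0,\dots,x_j$, and since $\|x_a\|\le R$ by (a) and $\|y_a\|\le 2R$ by (a), (c) together with $\delta<1$, this polynomial is Lipschitz on the relevant box with constant depending only on $d,R$. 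Combined with (c), this yields
\[
|\det G_j^x - \det G_j^y| \;\le\; C_1\,\delta^{2d+p},\qquad 0\le j\le n,
\]
for some $C_1=C_1(d,R,p)$.

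Next, a straightforward induction based on (b) gives $\det G_j^x\ge\delta^{2j}$ for all $0\le j\le n$, the inductive step being exactly $\det G_j^x/\det G_{j-1}^x>\delta^2$. Since $n\le d$ and $p>0$, the perturbation bound above then forces $\det G_j^y\ge\tfrac12\det G_j^x\ge\tfrac12\delta^{2j}$ for $\delta\le\delta_0$, for some $\delta_0=\delta_0(d,R,p)>0$ small enough. I would then apply the elementary identity $\tfrac{a}{b}-\tfrac{a'}{b'}=\tfrac{a(b'-b)}{bb'}+\tfrac{a-a'}{b'}$ with $(a,b)=(\det G_i^x,\det G_{i-1}^x)$ and $(a',b')=(\det G_i^y,\det G_{i-1}^y)$, noting that $a/b\le 4R^2$ by (a). Using the lower bounds on $b,b'$ and the Lipschitz estimate, each summand is bounded by a constant times $\delta^{2(d-i+1)+p}\le\delta^{p+2}$, because $i\le n\le d$. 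Hence
\[
\Bigl|d\bigl(x_i,\mathrm{aff}\{x_0,\dots,x_{i-1}\}\bigr)^2 - d\bigl(y_i,\mathrm{aff}\{y_0,\dots,y_{i-1}\}\bigr)^2\Bigr|\;\le\;C_2\,\delta^{p+2}
\]
for $\delta\le\delta_0$. Dividing by $d(x_i,\cdot)+d(y_i,\cdot)\ge\delta$ (using (b)) via the identity $|\sqrt u-\sqrt v|=|u-v|/(\sqrt u+\sqrt v)$, I obtain $|d(x_i,\cdot)-d(y_i,\cdot)|\le C_2\,\delta^{p+1}\le C_2\,\delta^p$, which is \eqref{eq:distance-affine-hull-xy} in the regime $\delta\le\delta_0$, with one power to spare.

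For $\delta\in[\delta_0,1)$, both distances are trivially bounded by $2R$ by (a), so the left-hand side of \eqref{eq:distance-affine-hull-xy} is at most $4R\le(4R/\delta_0^p)\,\delta^p$, and taking $C$ to exceed both constants finishes the proof. The main obstacle is really the bookkeeping of exponents in the ratio step: the exponent $2d+p$ in hypothesis (c) is precisely what is needed to absorb the worst-case division by $\det G_{i-1}^x\ge\delta^{2(d-1)}$ (occurring at $i=d$), with two powers of $\delta$ still left to spare for the ratio identity and for the passage from squared distances to distances. Everything else reduces to polynomial Lipschitz estimates and a one-line induction.
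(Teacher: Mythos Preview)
Your proof is correct and follows essentially the same approach as the paper: express the squared distances as ratios of Gram determinants, bound the perturbation of each determinant by $O(\delta^{2d+p})$ via its polynomial structure, and use the lower bound $\det G_j^x\ge\delta^{2j}$ coming from hypothesis (b) to control the ratios. The only cosmetic differences are that the paper translates so that $x_0=y_0=0$ and expands the determinant explicitly to get the perturbation bound, and that it organises the final step multiplicatively (obtaining $(1-C\delta^p)d(x_i,\cdot)\le d(y_i,\cdot)\le(1+C\delta^p)d(x_i,\cdot)$ and then using $d(x_i,\cdot)\le 2R$), whereas you use the additive identity for $a/b-a'/b'$ and then pass from squared distances to distances; both routes are equivalent.
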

\begin{proof}
It suffices to prove the result for $i=n$ and $\delta\leq \delta_0$ for $\delta_0 = \delta_0(d,R,p)$ small enough.

Without loss of generality we may assume that $x_0=y_0=0$. (The general result follows by considering translated points $\widetilde x_i = x_i-x_0$ and $\widetilde y_j = y_j-y_0$.) In this case, $V_i:=\mathrm{aff}\{0,x_1,\ldots,x_i\}$ and $W_j:=\mathrm{aff}\{0,y_1,\ldots, y_j\}$ are vector subspaces of $\R^d$; hence
\begin{equation}\label{eq:Gram-distance}
d(x_k,V_{k-1})^2 = \frac{G(x_1,\ldots,x_{k-1},x_k)}{G(x_1,\ldots,x_{k-1})},
\end{equation}
where 
\[
G(x_1,\ldots, x_k) = 
\mathrm{det}\left( \begin{array}{cccc}
    \langle x_1,x_1\rangle & \langle x_1,x_2\rangle & \cdots & \langle x_1,x_k\rangle\\
    \langle x_2,x_1\rangle & \langle x_2,x_2\rangle & \cdots & \langle x_2,x_k\rangle\\
    \vdots & \vdots & \ddots & \vdots\\
    \langle x_k,x_1\rangle & \langle x_k,x_2\rangle & \cdots & \langle x_k,x_k\rangle
  \end{array}  \right)
\]
is the Gram determinant. Similar formula holds for $d(y_k,W_{k-1})$.

\smallskip

Consider $a_{ij} = \langle x_i,x_j\rangle$, $b_{ij} = \langle y_i,y_j\rangle$ and $\epsilon_{ij} = b_{ij}-a_{ij}$. By the Cauchy-Schwarz inequality and assumptions (a) and (c), 
\[
|\varepsilon_{ij}|\leq \|x_i\|\,\|x_j-y_j\| + \|x_j\|\,\|x_i-y_i\| + \|x_i-y_i\|\,\|x_j-y_j\| \leq 2R^2\delta^{2d+p} + R^2\delta^{2(2d+p)}.
\]
Let $C_*=C_*(R)\geq1$ be such that $|a_{ij}|\leq C_*$, $|b_{ij}|\leq C_*$ and $|\varepsilon_{ij}|\leq C_*\delta^{2d+p}$.
We have 
\begin{eqnarray*}
G(y_1,\ldots, y_k) &= 
&\mathrm{det}\left( \begin{array}{ccc}
    b_{11}&  \cdots & b_{1k}\\
    \vdots &  \ddots & \vdots\\
    b_{k1} &  \cdots & b_{kk}
  \end{array}  \right)
= \sum\limits_{\sigma\in S_k}(-1)^{\mathrm{sign}(\sigma)}\,b_{1\sigma(1)}\ldots b_{k\sigma(k)}\\
&= &\sum\limits_{\sigma\in S_k}(-1)^{\mathrm{sign}(\sigma)}\,a_{1\sigma(1)}\ldots a_{k\sigma(k)} + R_k\\
&= &G(x_1,\ldots,x_k) + R_k,
\end{eqnarray*}
where
\begin{eqnarray*}
|R_k| &= 
&\big|
\sum\limits_{\sigma\in S_k}(-1)^{\mathrm{sign}(\sigma)}\,b_{1\sigma(1)}\ldots b_{k\sigma(k)}
- \sum\limits_{\sigma\in S_k}(-1)^{\mathrm{sign}(\sigma)}\,a_{1\sigma(1)}\ldots a_{k\sigma(k)}
\big|\\
&\leq &\sum\limits_{\sigma\in S_k}\big|
b_{1\sigma(1)}\ldots b_{k\sigma(k)} - a_{1\sigma(1)}\ldots a_{k\sigma(k)}
\big| 
\leq \sum\limits_{\sigma\in S_k} k C_*^k\delta^{2d+p} = k!kC_*^k\delta^{2d+p}\\
&\leq &d!dC_*^d\delta^{2d+p}.
\end{eqnarray*}
Furthermore, by assumption (b), 
\[
G(x_1,\ldots,x_k) = d(x_1,V_0)^2\,d(x_2,V_1)^2\ldots d(x_k,V_{k-1})^2 > \delta^{2k}\geq \delta^{2d}.
\]
Thus, $|R_k|\leq d!dC_*^d G(x_1,\ldots, x_k)\delta^p$ and we obtain for each $k\leq n$,
\[
(1-d!dC_*^d\delta^p)G(x_1,\ldots, x_k)\leq G(y_1,\ldots, y_k)\leq (1+d!dC_*^d\delta^p)G(x_1,\ldots,x_k).
\]
Plugging this into \eqref{eq:Gram-distance} gives that there is a constant $C_{**}=C_{**}(d,R,p)$, such that for all $k$ and $\delta\leq \delta_0(d,R,p)$,
\[
(1-C_{**}\delta^p)d(x_k,V_{k-1})\leq d(y_k,W_{k-1})\leq (1+C_{**}\delta^p)d(x_k,V_{k-1}).
\]
Finally, $d(x_k,V_{k-1})\leq 2R$ by assumption (a), thus we obtain \eqref{eq:distance-affine-hull-xy} with $C=2RC_{**}$.
\end{proof}

\medskip

In the following lemma, we collect all the applications of Proposition~\ref{prop:distance-affine-hull} used in the proof of Lemma~\ref{l:probability:tau}; cf.\ Definition~\ref{def:mathcal:z'} and below.

\begin{lemma}\label{l:affine-hulls-properties}
Let $x_0,\ldots, x_n\in\R^d$ and $y_0,\ldots, y_n\in\R^d$ satisfy the assumptions of Proposition~\ref{prop:distance-affine-hull} with $p>1$. Then there exists $\delta_0 = \delta_0(d,R,p)>0$ such that the following statements hold for all $\delta\in(0,\delta_0)$, $\delta_1\in[\delta,1)$ and $i\in\{1,\ldots, n\}$.
\begin{enumerate}
\item
If $d(x_i,\mathrm{aff}\{x_0,\ldots, x_{i-1}\}) > \delta_1$, then $d(y_i,\mathrm{aff}\{y_0,\ldots, y_{i-1}\})> \tfrac12\delta_1$.

\item
If $\delta_1 < d(0,\mathrm{aff}\{x_0,\ldots, x_{i-1}\}) \leq 2\delta_1$, then $\tfrac12\delta_1 <d(0,\mathrm{aff}\{y_0,\ldots, y_{i-1}\})< 4\delta_1$.

\item
If $d(x,\mathrm{aff}\{x_0,\ldots, x_{i-1}\})<\delta_1$ for some $x\in\R^d$ with $\|x\|\leq R$, then 

$d(y,\mathrm{aff}\{y_0,\ldots, y_{i-1}\})<2\delta_1$ for any $y\in\R^d$ with $\|y-x\|\leq R\delta^{2d+p}$.
\end{enumerate}
\end{lemma}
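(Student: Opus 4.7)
The plan is to deduce all three statements from Proposition~\ref{prop:distance-affine-hull} by choosing $\delta_0=\delta_0(d,R,p)$ small enough that the approximation error $C\delta^p$ in that proposition satisfies $C\delta^p\leq C\delta_0^{p-1}\delta\leq\tfrac14\delta_1$ for all $\delta\leq\delta_0$ and $\delta_1\geq\delta$; the assumption $p>1$ is crucial here. Along the way, Proposition~\ref{prop:distance-affine-hull} applied to the original sequences already yields $d(y_j,\mathrm{aff}\{y_0,\ldots,y_{j-1}\})\geq\tfrac12\delta>0$ for every $1\leq j\leq n$, so the $y_j$'s are affinely independent and $\mathrm{aff}\{y_0,\ldots,y_{j-1}\}$ is $(j-1)$-dimensional.

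Statement (1) is immediate from the error bound of Proposition~\ref{prop:distance-affine-hull}: $d(y_i,\mathrm{aff}\{y_0,\ldots,y_{i-1}\})\geq d(x_i,\mathrm{aff}\{x_0,\ldots,x_{i-1}\})-C\delta^p>\delta_1-\tfrac12\delta_1=\tfrac12\delta_1$. For statement (2), I would apply Proposition~\ref{prop:distance-affine-hull} to the \emph{augmented} sequences $(x_0,\ldots,x_{i-1},0)$ and $(y_0,\ldots,y_{i-1},0)$: hypotheses (a) and (c) at the new step are trivial, and hypothesis (b) at the new step reduces to $d(0,\mathrm{aff}\{x_0,\ldots,x_{i-1}\})>\delta$, which holds since this distance is $>\delta_1\geq\delta$ by assumption. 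The proposition then gives $\bigl|d(0,\mathrm{aff}\{x_0,\ldots,x_{i-1}\})-d(0,\mathrm{aff}\{y_0,\ldots,y_{i-1}\})\bigr|\leq\tfrac12\delta_1$, from which the hypothesis of (2) yields $\tfrac12\delta_1<d(0,\mathrm{aff}\{y_0,\ldots,y_{i-1}\})<\tfrac52\delta_1<4\delta_1$.

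The hard part is statement (3): one \emph{cannot} augment with $(x,y)$ and reuse Proposition~\ref{prop:distance-affine-hull} directly, since hypothesis (b) at the extra step would require $d(x,\mathrm{aff}\{x_0,\ldots,x_{i-1}\})>\delta$, whereas this distance may in fact vanish. Instead, I would construct an explicit affine map $\phi:\mathrm{aff}\{x_0,\ldots,x_{i-1}\}\to\mathrm{aff}\{y_0,\ldots,y_{i-1}\}$ by writing each $z$ uniquely as $z=x_0+\sum_{j=1}^{i-1}c_j(x_j-x_0)$ and setting $\phi(z):=y_0+\sum_{j=1}^{i-1}c_j(y_j-y_0)$. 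The coefficients $(c_j)$ solve a Gram linear system $Mc=b$ with $M_{jk}=\langle x_j-x_0,x_k-x_0\rangle$ and $b_j=\langle z-x_0,x_j-x_0\rangle$; by hypothesis (b) of Proposition~\ref{prop:distance-affine-hull},
\[
\det M=G(x_1-x_0,\ldots,x_{i-1}-x_0)=\prod_{j=1}^{i-1}d(x_j,\mathrm{aff}\{x_0,\ldots,x_{j-1}\})^2\geq\delta^{2d},
\]
while the entries of $M$ and $b$ are bounded by a constant depending only on $R$ when $\|z\|\leq 2R$. Cramer's rule therefore gives $|c_j|\leq C(d,R)\delta^{-2d}$, and hence
\[
\|z-\phi(z)\|\leq\|x_0-y_0\|+\sum_{j=1}^{i-1}|c_j|\bigl(\|x_j-y_j\|+\|x_0-y_0\|\bigr)\leq C'(d,R)\,\delta^p.
\]
Taking $z$ to be the orthogonal projection of $x$ onto $\mathrm{aff}\{x_0,\ldots,x_{i-1}\}$ (so $\|z\|\leq R+\delta_1\leq 2R$ and $\|x-z\|<\delta_1$), the triangle inequality yields
\[
d(y,\mathrm{aff}\{y_0,\ldots,y_{i-1}\})\leq\|y-\phi(z)\|\leq\|y-x\|+\|x-z\|+\|z-\phi(z)\|\leq R\delta^{2d+p}+\delta_1+C'\delta^p,
\]
which is less than $2\delta_1$ once $\delta_0$ is small enough, thanks to $\delta\leq\delta_0\leq\delta_1$ and $p>1$.
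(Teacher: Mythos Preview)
Your proofs of (1) and (2) match the paper's exactly: apply Proposition~\ref{prop:distance-affine-hull} to the original sequences for (1), and to the sequences augmented by the origin for (2), then use $C\delta^p<\tfrac12\delta\leq\tfrac12\delta_1$ (which is where $p>1$ enters).

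For (3) you take a genuinely different route. The paper argues by contradiction: assuming $d(y,\mathrm{aff}\{y_0,\ldots,y_{i-1}\})\geq 2\delta_1$, it \emph{swaps} the roles of the two sequences and applies Proposition~\ref{prop:distance-affine-hull} to $(y_0,\ldots,y_{i-1},y)$ and $(x_0,\ldots,x_{i-1},x)$. This is legitimate because, by (1), $d(y_j,\mathrm{aff}\{y_0,\ldots,y_{j-1}\})>\tfrac12\delta$ for all $j$, and the contradiction hypothesis supplies $d(y,\mathrm{aff}\{y_0,\ldots,y_{i-1}\})>\tfrac12\delta$ at the final step, so hypothesis~(b) holds throughout with $\delta$ replaced by $\tfrac12\delta$. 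The resulting bound $|d(y,\ldots)-d(x,\ldots)|\leq C'\delta^p<\tfrac12\delta_1$ forces $d(x,\mathrm{aff}\{x_0,\ldots,x_{i-1}\})>\tfrac32\delta_1$, a contradiction. Your direct argument via the explicit affine map and Cramer's rule is also correct (the Gram determinant lower bound $\det M\geq\delta^{2(i-1)}\geq\delta^{2d}$ is exactly what drives the $\delta^{-2d}$ coefficient bound, matching the exponent in hypothesis~(c)). The paper's swap trick is shorter and avoids redoing any linear algebra, while your approach is more constructive and makes the dependence on the Gram determinant explicit; both reach the same conclusion with the same quantitative input. One cosmetic point: your bound $\|z\|\leq R+\delta_1\leq 2R$ tacitly assumes $R\geq 1$, but replacing $2R$ by $R+1$ throughout fixes this with no change to the argument.
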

\begin{proof}
By Proposition~\ref{prop:distance-affine-hull}, 
\[
\big|d(x_i,\mathrm{aff}\{x_0,\ldots, x_{i-1}\}) - d(y_i,\mathrm{aff}\{y_0,\ldots, y_{i-1}\})\big|\leq C\delta^p<\tfrac12\delta\leq \tfrac12\delta_1,
\]
for all $\delta$ small enough. 
The first statement now follows from the assumption on $d(x_i,\mathrm{aff}\{x_0,\ldots, x_{i-1}\})$.

\smallskip

Since $\delta_1\in[\delta,1)$, the points $\{x_0,\ldots, x_{i-1},0\}$ and $\{y_0,\ldots, y_{i-1},0\}$ satisfy the assumptions of Proposition~\ref{prop:distance-affine-hull}. Thus, 
\[
\big|d(0,\mathrm{aff}\{x_0,\ldots, x_{i-1}\}) - d(0,\mathrm{aff}\{y_0,\ldots, y_{i-1}\})\big|\leq C\delta^p<\tfrac12\delta\leq \tfrac12\delta_1,
\]
for all $\delta$ small enough. The second statement now follows from the assumption on $d(0,\mathrm{aff}\{x_0,\ldots, x_{i-1}\})$.

\smallskip

We prove the third statement by contradiction. Assume $d(y,\mathrm{aff}\{y_0,\ldots, y_{i-1}\})\geq 2\delta_1$. By the first statement, the points $\{y_0,\ldots, y_{i-1},y\}$ satisfy the assumptions (on $x_0,\ldots, x_i$) of Proposition~\ref{prop:distance-affine-hull} with $\delta:=\tfrac12\delta$ (and suitably enlarged $R$). Thus, 
\[
\big|d(y,\mathrm{aff}\{y_0,\ldots, y_{i-1}\})-d(x,\mathrm{aff}\{x_0,\ldots, x_{i-1}\})\big|\leq C'\delta^p<\tfrac12\delta\leq \tfrac12\delta_1,
\]
for all $\delta$ small enough; hence 
\[
d(x,\mathrm{aff}\{x_0,\ldots, x_{i-1}\}) >d(y,\mathrm{aff}\{y_0,\ldots, y_{i-1}\}) - \tfrac12\delta_1 >\delta_1, 
\]
which contradicts the assumption $d(x,\mathrm{aff}\{x_0,\ldots, x_{i-1}\})<\delta_1$. Claim (3) is proven. 
\end{proof}

\subsection{Some results on hitting probabilities for Brownian motion related to affine spaces}\label{sec:aux-BM-affine-hulls}

In this section we discuss three applications of the Poisson formula (see Lemma~\ref{l:poisson-formula}), which we use to obtain final estimates in the proof of Lemma~\ref{l:probability:tau}; see the proof of \eqref{eq:probability-tau:final} and \eqref{eq:probability-tau:final:2}.

\begin{lemma}\label{l:affine-hull-hitting-1}
Let $d\geq 3$. Let $\delta\in(0,\tfrac12]$, $\rho\in(0,1]$, $k\in\{0,\ldots, d-1\}$ and $D>1$. 
Let $h$ be an affine space of dimension $k$ such that $\delta\leq d(0,h)\leq 2\delta$. Let $y\in h\cap\partial B(1)$.
Let $\overline\varepsilon\in(0,1)$ and let $\mathcal Z\subseteq\partial B(1)$ be such that for any $x\in\partial B(1)$, $1\leq \big|\{z\in\mathcal Z\,:\,\|z-x\|<\overline\varepsilon\}\big|\leq 3^d$. Then there exists $C=C(d)$ such that for $\delta'\in(0,\delta]$,
\[
\sum\limits_{z\in\mathcal Z:d(0,\mathrm{aff}\{h,z\})\leq \delta'}
\sup\limits_{\substack{q\in\partial B(1+\rho)\\ \|q-y\|\leq D\rho}}
\mathsf P_q[W_{T_1}\in B(z,\overline\varepsilon)]\leq CD^{d-1}\big(\tfrac {\delta'}{\delta}\big)^{d-1-k}.
\]
\end{lemma}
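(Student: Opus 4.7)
The plan is to bound each probability via Poisson's formula (Lemma~\ref{l:poisson-formula}), reduce $\sup_q$ to a case distinction in $\|y-z\|$, convert the sum over $\mathcal Z$ to a surface integral via the packing property, and finally estimate that integral using a geometric description of $A_{\delta'}:=\{z\in\partial B(1):d(0,\mathrm{aff}\{h,z\})\leq\delta'\}$. Since $\|q-z\|\geq\rho$ uniformly for $q\in\partial B(1+\rho)$ and $z\in\partial B(1)$, Poisson's formula gives $\mathsf P_q[W_{T_1}\in B(z,\overline\varepsilon)]\lesssim\rho\overline\varepsilon^{d-1}\|q-z\|^{-d}$. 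Combining $\|q-z\|\geq\rho$ with $\|q-z\|\geq\|y-z\|-D\rho$ yields $\sup_{q}\|q-z\|^{-d}\lesssim\rho^{-d}\mathbf 1_{\|y-z\|\leq 2D\rho}+\|y-z\|^{-d}\mathbf 1_{\|y-z\|>2D\rho}$. The packing property of $\mathcal Z$ lets me convert the sum to an integral (absorbing the $\overline\varepsilon^{d-1}$-factors), reducing the lemma to
\[
S\lesssim \rho^{1-d}\,\sigma\big(A_{\delta'}\cap B(y,2D\rho)\big)+\rho\int_{A_{\delta'}\setminus B(y,2D\rho)}\|y-z\|^{-d}\,d\sigma(z),
\]
where $\sigma$ is the surface measure on $\partial B(1)$.

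Next I would describe the geometry of $A_{\delta'}$. Let $V=\mathrm{span}(h)$, a $(k+1)$-dim subspace of $\R^d$ (since $0\notin h$); $V_h\subset V$ the direction space of $h$; $p_h\in h$ the closest point to $0$, so $p_h\perp V_h$ and $d_h:=\|p_h\|\in[\delta,2\delta]$; and $e_0=p_h/d_h$. Decomposing $z=z_0e_0+v_{V_h}+v_\perp$ with $v_{V_h}\in V_h$ and $v_\perp\in V^\perp$, and writing $\mathrm{aff}\{h,z\}=p_h+V_h+\R u$ with $u=(z_0-d_h)e_0+v_\perp\in V_h^\perp$, a direct projection of $-p_h$ onto $\R u$ inside $V_h^\perp$ yields
\[
d(0,\mathrm{aff}\{h,z\})^2=\frac{d_h^2\|v_\perp\|^2}{(z_0-d_h)^2+\|v_\perp\|^2}.
\]
In the main case $\delta'\leq d_h/2$ this is equivalent, up to multiplicative constants, to $\|v_\perp\|\leq C(\delta'/\delta)|z_0-d_h|$: thus $A_{\delta'}$ is a ``pinched tube'' around the $k$-sphere $V\cap\partial B(1)$, whose width is proportional to the $e_0$-distance from the equatorial $(k-1)$-sphere $h\cap\partial B(1)$ (which contains $y$). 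The remaining case $\delta'\in(d_h/2,\delta]$ (which forces $\delta'/\delta\geq 1/2$, so $(\delta'/\delta)^{d-1-k}$ is bounded below by a dimensional constant) and the degenerate configurations ($k=0$, $d_h=1$; or $d_h$ close to $1$) can be handled separately via the trivial bound $\sigma(A_{\delta'})\lesssim 1$ combined with the same two-piece splitting.

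Finally, for the pinched-tube estimates I would use local coordinates around $y$ adapted to $\R^d=\R e_0\oplus V_h\oplus V^\perp$: let $t=z_0-d_h$, $r=\|v_\perp\|$, $\theta$ the geodesic distance on $V\cap\partial B(1)$ from $y$ to the $V$-projection of $z$, and $\omega$ spherical angles on the transverse spheres. A Jacobian computation then gives $\|y-z\|^2\approx t^2+r^2+\theta^2$ and $d\sigma\approx r^{d-k-2}\theta^{k-2}\,dt\,dr\,d\theta\,d\omega$ (for $k\geq 2$; the small-$k$ variants are analogous), and the constraint defining $A_{\delta'}$ becomes $r\leq C(\delta'/\delta)|t|$. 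Integrating $r$ first produces a factor $((\delta'/\delta)|t|)^{d-k-1}$; passing to polar coordinates $(\tau,\phi)$ in the $(t,\theta)$-plane (so the remaining integrands become $\tau^{d-2}$ and $\tau^{-2}$ respectively) delivers the two key estimates
\[
\sigma\big(A_{\delta'}\cap B(y,s)\big)\lesssim \Big(\tfrac{\delta'}{\delta}\Big)^{d-k-1}\min(s,1)^{d-1},\qquad \int_{A_{\delta'}\setminus B(y,s)}\|y-z\|^{-d}\,d\sigma\lesssim \Big(\tfrac{\delta'}{\delta}\Big)^{d-k-1}\tfrac{1}{s}
\]
for $s\geq\rho$. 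Applying both with $s=2D\rho$: the first piece of $S$ is $\lesssim\rho^{1-d}(\delta'/\delta)^{d-k-1}\min(D\rho,1)^{d-1}\leq D^{d-1}(\delta'/\delta)^{d-k-1}$ (the inequality holding in both $D\rho\leq 1$ and $D\rho>1$), and the second piece is $\lesssim(\delta'/\delta)^{d-k-1}/D\leq D^{d-1}(\delta'/\delta)^{d-k-1}$, yielding the claim. The main obstacle I anticipate is the careful geometric/Jacobian computation on $\partial B(1)$ for the pinched-tube coordinates, together with a uniform treatment of the low-$k$ edge cases and $d_h$ close to $1$.
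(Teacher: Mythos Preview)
Your approach is correct and shares its skeleton with the paper's proof: Poisson's formula, a split according to $\|y-z\|$, and the same underlying geometric fact that $A_{\delta'}=\{z\in\partial B(1):d(0,\mathrm{aff}\{h,z\})\le\delta'\}$ has surface measure $\lesssim(\delta'/\delta)^{d-1-k}s^{d-1}$ in any ball $B(y,s)$ centered at $y\in h\cap\partial B(1)$. The paper's execution is shorter in two ways. First, instead of passing to a continuous surface integral it uses a dyadic decomposition $\mathcal Z_i=\{z\in\mathcal Z:2^iD\rho\le\|y-z\|\le 2^{i+1}D\rho\}$ (with $\mathcal Z_0$ the ball of radius $2D\rho$) and sums a geometric series in $i$. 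Second, it simply \emph{asserts} the count
\[
|\mathcal Z_i|\le C\big(\tfrac{1}{\overline\varepsilon}\big)^{d-1}(2^iD\rho)^{k}\big(\tfrac{\delta'}{\delta}2^iD\rho\big)^{d-1-k},
\]
which encodes exactly your pinched-tube picture ($k$ free directions along $h$, and $d-1-k$ transverse directions with relative aperture $\delta'/\delta$), without writing down charts or Jacobians. Your local-coordinate computation is correct but heavier than needed: once you have derived $\|v_\perp\|\lesssim(\delta'/\delta)|z_0-d_h|$, the surface-measure bound on $A_{\delta'}\cap B(y,s)$ follows by a crude covering argument on $\partial B(1)$, and the paper evidently regards this step as routine.
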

\begin{proof}[Proof of Lemma~\ref{l:affine-hull-hitting-1}]
We will apply the Poisson formula (Lemma~\ref{l:poisson-formula}) with $r=1$, $x=q$ and $B=B(z,\overline\varepsilon)\cap\partial B(1)$.

Let $\mathcal Z_0$ be the set of all $z\in\mathcal Z$ such that $d(0,\mathrm{aff}\{h,z\})\leq \delta'$ and $\|y-z\|\leq 2D\rho$ and let $\mathcal Z_i$ be the set of all $z\in\mathcal Z$ such that $d(0,\mathrm{aff}\{h,z\})\leq \delta'$ and $2^iD\rho\leq \|y-z\|\leq 2^{i+1}D\rho$, for $i\geq 1$. 
Note that 
\[
|\mathcal Z_i|\leq C \big(\tfrac{1}{\overline\varepsilon}\big)^{d-1}\,(2^iD\rho)^k\,\big(\tfrac{\delta'}{\delta}2^iD\rho\big)^{d-1-k} = C\big(\tfrac{2^iD\rho}{\overline\varepsilon}\big)^{d-1}\,\big(\tfrac{\delta'}{\delta}\big)^{d-1-k}
\]
and for each $z\in\mathcal Z_i$ and $q\in\partial B(1+\rho)$ with $\|q-y\|\leq D\rho$, we have $\|q-z\|\geq \|y-z\|-\|y-q\|\geq 2^{i-1}D\rho$ (when $i\geq 1$) and $\|q-z\|\geq \rho$ (when $i=0$); thus by the Poisson formula, 
\begin{equation}\label{eq:poisson-formula-zi}
\mathsf P_q[W_{T_1}\in B(z,\overline\varepsilon)] \leq 
C\left\{\begin{array}{ll}
\tfrac{\rho\,\overline\varepsilon^{d-1}}{\rho^d} = 
\big(\tfrac{\overline\varepsilon}{\rho}\big)^{d-1} & \text{for }i=0;\\[4pt]
\tfrac{\rho\,\overline\varepsilon^{d-1}}{(2^iD\rho)^d} 
\leq \tfrac{1}{2^{id}}\big(\tfrac{\overline\varepsilon}{\rho}\big)^{d-1}
&\text{for }i\geq 1.\end{array}\right.
\end{equation}
As a result, 
\begin{multline*}
\sum\limits_{z\in\mathcal Z:d(0,\mathrm{aff}\{h,z\})\leq \delta'}
\sup\limits_{\substack{q\in\partial B(1+\rho)\\ \|q-y\|\leq D\rho}}
\mathsf P_q[W_{T_1}\in B(z,\overline\varepsilon)] 
\leq \sum_{i=0}^\infty\sum\limits_{z\in\mathcal Z_i}
\sup\limits_{\substack{q\in\partial B(1+\rho)\\ \|q-y\|\leq D\rho}}
\mathsf P_q[W_{T_1}\in B(z,\overline\varepsilon)]\\
\begin{aligned}
\leq 
&\,\,\sum\limits_{i=0}^\infty\Big(C\big(\tfrac{2^iD\rho}{\overline\varepsilon}\big)^{d-1}\,\big(\tfrac{\delta'}{\delta}\big)^{d-1-k}\Big)\,\Big(C\tfrac{1}{2^{id}}\big(\tfrac{\overline\varepsilon}{\rho}\big)^{d-1}\Big)\\
= &\,\,2C^2D^{d-1}\big(\tfrac{\delta'}{\delta}\big)^{d-1-k}.
\end{aligned}
\end{multline*}
The proof is completed.
\end{proof}

\smallskip

\begin{lemma}\label{l:affine-hull-hitting-2}
Let $d\geq 3$. Let $\rho\in(0,1]$, $k\in\{0,\ldots, d-1\}$ and $D>1$. Let $h$ be an affine space of dimension $k$ such that $h\cap\partial B(1)\neq\emptyset$. Let $y\in h\cap\partial B(1)$. 
Let $\overline\varepsilon\in(0,1)$ and let $\mathcal Z\subseteq\partial B(1)$ be such that for any $x\in\partial B(1)$, $1\leq \big|\{z\in\mathcal Z\,:\,\|z-x\|<\overline\varepsilon\}\big|\leq 3^d$. Then there exists $C=C(d)$ such that for $\delta'\in(0,1)$,
\[
\sum\limits_{z\in\mathcal Z:d(z,h)\leq \delta'}
\sup\limits_{\substack{q\in\partial B(1+\rho)\\ \|q-y\|\leq D\rho}}
\mathsf P_q[W_{T_1}\in B(z,\overline\varepsilon)]\leq CD^{(k-1)^+}\big(\tfrac{\delta'}{\rho}\big)^{(d-1)-(k-1)^+}.
\]
\end{lemma}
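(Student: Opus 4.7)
The plan is to run the dyadic annulus argument of Lemma~\ref{l:affine-hull-hitting-1} with $\mathcal Z$ partitioned by distance to $y$. Set
\[
\mathcal Z_0=\{z\in\mathcal Z:d(z,h)\le\delta',\ \|z-y\|\le 2D\rho\},\quad
\mathcal Z_i=\{z\in\mathcal Z:d(z,h)\le\delta',\ 2^iD\rho\le\|z-y\|\le 2^{i+1}D\rho\}\ (i\ge 1).
\]
The Poisson-formula estimate on $\mathsf P_q[W_{T_1}\in B(z,\overline\varepsilon)]$, uniform in $q\in\partial B(1+\rho)$ with $\|q-y\|\le D\rho$, goes through verbatim as in \eqref{eq:poisson-formula-zi}: bounded by $C(\overline\varepsilon/\rho)^{d-1}$ on $\mathcal Z_0$ and by $C\,2^{-id}D^{-d}(\overline\varepsilon/\rho)^{d-1}$ on $\mathcal Z_i$ for $i\ge 1$. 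What is new is the cardinality count of $\mathcal Z_i$, which now reflects the codimension of $h\cap\partial B(1)$ inside $\partial B(1)$ rather than the closeness-to-origin of an affine span used in Lemma~\ref{l:affine-hull-hitting-1}.

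The geometric crux is the tube area estimate, for any $r\in(0,2]$,
\[
\mathrm{area}\bigl(\{z\in\partial B(1):d(z,h)\le\delta',\ \|z-y\|\le r\}\bigr)\le C\min\bigl(r^{d-1},\ r^{(k-1)^+}(\delta')^{(d-1)-(k-1)^+}\bigr),
\]
from which the $3^d$-packing hypothesis yields $|\mathcal Z_i|\le C\overline\varepsilon^{-(d-1)}$ times this quantity with $r=2^{i+1}D\rho$. The estimate rests on the observation that $h\cap\partial B(1)$ is contained in a submanifold of $\partial B(1)$ of dimension $(k-1)^+$: a $(k-1)$-sphere for $k\ge 2$, at most two points for $k=1$, and the singleton $\{y\}$ for $k=0$. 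Its $\delta'$-neighborhood on the sphere has $(d-1)$-volume inside the spherical ball of radius $r$ around $y$ bounded by a tangential factor $r^{(k-1)^+}$ times the normal volume $(\delta')^{(d-1)-(k-1)^+}$, capped by the ball's own $(d-1)$-volume $r^{d-1}$.

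Multiplying the count and the Poisson bound, each $\mathcal Z_i$ contributes at most the minimum of two geometric expressions in $i$, which switches branches at $i_0:=\log_2(\delta'/(D\rho))$. I split the sum accordingly. For indices $i\le i_0$ (the annulus sits inside the tube) the first branch dominates, producing a geometric series $\sum 2^{-i}D^{-1}\lesssim D^{-1}$, and using $\delta'\ge 2D\rho$ one checks $D^{-1}\le D^{(k-1)^+}(\delta'/\rho)^{(d-1)-(k-1)^+}$. For $i>i_0$ the thin-tube branch is tight and, since $d-(k-1)^+\ge 2$ whenever $k\le d-1$, the series is geometric; evaluated via the identity $2^{i_0}D=\delta'/\rho$ (when $i_0\ge 1$) or by direct summation from $i=1$ (when $i_0<1$), both partial sums are dominated by a constant times $D^{(k-1)^+}(\delta'/\rho)^{(d-1)-(k-1)^+}$ (the former case reduces to $\rho/\delta'\le D^{(k-1)^+}(\delta'/\rho)^{(d-1)-(k-1)^+}$, which is clear from $\rho/\delta'\le 1$). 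In the degenerate case $k=0$ one additionally uses that $\mathcal Z_i=\emptyset$ for $i>i_0$, since then $d(z,h)=\|z-y\|$.

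The main obstacle is the tube area estimate itself: the uniform exponent $(k-1)^+$ masks a genuine geometric case distinction, with $h\cap\partial B(1)$ being a finite set for $k\in\{0,1\}$ and a positive-dimensional sphere for $k\ge 2$, so the underlying verification has to be carried out case by case even though the resulting exponent matches. Once this estimate is in hand, the two geometric summations are a mechanical variation on the end of the proof of Lemma~\ref{l:affine-hull-hitting-1}.
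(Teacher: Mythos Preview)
Your proposal is correct and follows the same dyadic-annulus scheme as the paper, but you work harder than necessary. The paper uses only the single tube bound
\[
|\mathcal Z_i|\leq C\,\overline\varepsilon^{-(d-1)}\,(2^iD\rho)^{(k-1)^+}\,(\delta')^{(d-1)-(k-1)^+}
\]
for \emph{all} $i\geq 0$, combined with the cruder Poisson estimate $C\,2^{-id}(\overline\varepsilon/\rho)^{d-1}$ from \eqref{eq:poisson-formula-zi} (i.e.\ it drops your extra $D^{-d}$ factor). The product gives terms of size $C\,D^{(k-1)^+}(\delta'/\rho)^{(d-1)-(k-1)^+}\,2^{-i(d-(k-1)^+)}$, and since $d-(k-1)^+\geq 2$ for $k\leq d-1$, the series sums directly to $2C\,D^{(k-1)^+}(\delta'/\rho)^{(d-1)-(k-1)^+}$. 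There is no need to take the minimum with $r^{d-1}$, no split at $i_0=\log_2(\delta'/(D\rho))$, and no separate treatment of $k=0$. Your additional bookkeeping is sound but buys nothing: both routes yield the same final bound.
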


\begin{proof}[Proof of Lemma~\ref{l:affine-hull-hitting-2}]
The argument is similar to the proof of Lemma~\ref{l:affine-hull-hitting-1}.
Let $\mathcal Z_0$ be the set of all $z\in\mathcal Z$ such that $d(z,h)\leq \delta'$ and $\|z-y\|\leq 2D\rho$ and let $\mathcal Z_i$ be the set of all $z\in\mathcal Z$ such that $d(z,h)\leq \delta'$ and $2^iD\rho\leq \|z-y\|\leq 2^{i+1}D\rho$, for $i\geq 1$. 
Note that 
\[
|\mathcal Z_i|\leq C\big(\tfrac{1}{\overline\varepsilon}\big)^{d-1}\,(2^iD\rho)^{(k-1)^+}\,(\delta')^{(d-1)-(k-1)^+}
\]
and for each $z\in\mathcal Z_i$ and $q\in\partial B(1+\rho)$ with $\|q-y\|\leq D\rho$, 
the probability $P_q[W_{T_1}\in B(z,\overline\varepsilon)]$ 
is bounded exactly as in \eqref{eq:poisson-formula-zi}. Thus,
\begin{multline*}
\sum\limits_{z\in\mathcal Z:d(z,h)\leq \delta'}
\sup\limits_{\substack{q\in\partial B(1+\rho)\\ \|q-y\|\leq D\rho}}
\mathsf P_q[W_{T_1}\in B(z,\overline\varepsilon)]
\leq \sum_{i=0}^\infty\sum\limits_{z\in\mathcal Z_i}
\sup\limits_{\substack{q\in\partial B(1+\rho)\\ \|q-y\|\leq D\rho}}
\mathsf P_q[W_{T_1}\in B(z,\overline\varepsilon)]\\
\begin{aligned}
\leq 
&\,\,\sum\limits_{i=0}^\infty\Big(C\big(\tfrac{1}{\overline\varepsilon}\big)^{d-1}\,(2^iD\rho)^{(k-1)^+}\,(\delta')^{(d-1)-(k-1)^+}\Big)\,\Big(C\tfrac{1}{2^{id}}\big(\tfrac{\overline\varepsilon}{\rho}\big)^{d-1}\Big)\\
\leq &\,\,2C^2D^{(k-1)^+}\big(\tfrac{\delta'}{\rho}\big)^{(d-1)-(k-1)^+}.
\end{aligned}
\end{multline*}
The proof is completed. 
\end{proof}

\smallskip

\begin{lemma}\label{l:affine-hull-hitting-3}
Let $d\geq 3$. Let $\rho\in(0,1]$ and $D>1$. Let $y\in \partial B(1)$. 
Let $\overline\varepsilon\in(0,1)$ and let $\mathcal Z\subseteq\partial B(1)$ be such that for any $x\in\partial B(1)$, $1\leq \big|\{z\in\mathcal Z\,:\,\|z-x\|<\overline\varepsilon\}\big|\leq 3^d$. Then there exists $C=C(d)$ such that
\[
\sum\limits_{z\in\mathcal Z}
\sup\limits_{\substack{q\in\partial B(1+\rho)\\ \|q-y\|\leq D\rho}}
\mathsf P_q[W_{T_1}\in B(z,\overline\varepsilon)]\leq C D^{d-1}.
\]
\end{lemma}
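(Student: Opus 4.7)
The plan is to mimic the dyadic-shell decomposition used in the proofs of Lemmas~\ref{l:affine-hull-hitting-1} and \ref{l:affine-hull-hitting-2}, but with the affine-hull constraint on $\mathcal Z$ simply dropped; this essentially reduces to the ``unconstrained $k=d-1$'' version of Lemma~\ref{l:affine-hull-hitting-1}.

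First I would split $\mathcal Z$ according to distance from $y$: set $\mathcal Z_0=\{z\in\mathcal Z:\|z-y\|\le 2D\rho\}$ and, for $i\ge 1$, set $\mathcal Z_i=\{z\in\mathcal Z:2^iD\rho\le\|z-y\|\le 2^{i+1}D\rho\}$, so that $\bigcup_{i\ge 0}\mathcal Z_i=\mathcal Z$. Next, the $\overline\varepsilon$-separation assumption (each $\overline\varepsilon$-ball on $\partial B(1)$ contains at most $3^d$ points of $\mathcal Z$), combined with the elementary surface-area bound $\mathcal H^{d-1}\bigl(\partial B(1)\cap B(y,R)\bigr)\le CR^{d-1}$ (trivially bounded by $|\partial B(1)|$ when $R>2$), yields $|\mathcal Z_i|\le C(2^iD\rho/\overline\varepsilon)^{d-1}$ uniformly in $i\ge 0$.

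The hitting-probability estimate then carries over verbatim from \eqref{eq:poisson-formula-zi}. Indeed, for any $q\in\partial B(1+\rho)$ with $\|q-y\|\le D\rho$ and $z\in\mathcal Z_i$, the triangle inequality gives $\|q-z\|\ge\rho$ when $i=0$ and $\|q-z\|\ge 2^{i-1}D\rho$ when $i\ge 1$, so Lemma~\ref{l:poisson-formula} yields $\sup_q\mathsf P_q[W_{T_1}\in B(z,\overline\varepsilon)]\le C(\overline\varepsilon/\rho)^{d-1}$ in the first case and $\le C2^{-id}(\overline\varepsilon/\rho)^{d-1}$ in the second. Multiplying these two estimates, observing that the $\overline\varepsilon^{d-1}$ and $\rho^{d-1}$ factors cancel exactly, and summing the resulting geometric series in $i$ produces a bound of the form $CD^{d-1}\sum_{i\ge 0}2^{-i}\le CD^{d-1}$, as desired.

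I do not expect any genuine obstacle here: dropping the affine-hull constraint only enlarges the set being summed over, so the argument is strictly a simplification of Lemma~\ref{l:affine-hull-hitting-1}. The only mild technical point is to check that the bound $|\mathcal Z_i|\le C(2^iD\rho/\overline\varepsilon)^{d-1}$ remains valid once $2^iD\rho$ exceeds the diameter of $\partial B(1)$; this is handled by simply replacing the volume estimate by the trivial $\overline\varepsilon$-net bound $|\mathcal Z|\le C\overline\varepsilon^{-(d-1)}$ from that point on, which does not affect the geometric summation.
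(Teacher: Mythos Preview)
Your proposal is correct and is essentially identical to the paper's own proof: the same dyadic decomposition $\mathcal Z_i$ by distance from $y$, the same counting bound $|\mathcal Z_i|\le C(2^iD\rho/\overline\varepsilon)^{d-1}$, the same Poisson-kernel estimate \eqref{eq:poisson-formula-zi}, and the same geometric summation. The only extra remark you add---about the regime $2^iD\rho>2$---is harmless (those $\mathcal Z_i$ are empty) and not made explicit in the paper.
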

\begin{proof}
The argument is similar to the proof of Lemmas~\ref{l:affine-hull-hitting-1} and \ref{l:affine-hull-hitting-2}. 
Let $\mathcal Z_0$ be the set of all $z\in\mathcal Z$ such that $\|z-y\|\leq 2D\rho$ and let $\mathcal Z_i$ be the set of all $z\in\mathcal Z$ such that $2^iD\rho\leq \|z-y\|\leq 2^{i+1}D\rho$, for $i\geq 1$. 
Note that 
\[
|\mathcal Z_i|\leq C\big(\tfrac{1}{\overline\varepsilon}\big)^{d-1}\,(2^iD\rho)^{d-1}
\]
and for each $z\in\mathcal Z_i$ and $q\in\partial B(1+\rho)$ with $\|q-y\|\leq D\rho$, 
the probability $P_q[W_{T_1}\in B(z,\overline\varepsilon)]$ 
is bounded exactly as in \eqref{eq:poisson-formula-zi}. Thus,
\begin{eqnarray*}
\sum\limits_{z\in\mathcal Z}
\sup\limits_{\substack{q\in\partial B(1+\rho)\\ \|q-y\|\leq D\rho}}
\mathsf P_q[W_{T_1}\in B(z,\overline\varepsilon)]
&\leq &\sum_{i=0}^\infty\sum\limits_{z\in\mathcal Z_i}
\sup\limits_{\substack{q\in\partial B(1+\rho)\\ \|q-y\|\leq D\rho}}
\mathsf P_q[W_{T_1}\in B(z,\overline\varepsilon)]\\
&\leq 
&\sum\limits_{i=0}^\infty\Big(C\big(\tfrac{1}{\overline\varepsilon}\big)^{d-1}\,(2^iD\rho)^{d-1}\Big)\,\Big(C\tfrac{1}{2^{id}}\big(\tfrac{\overline\varepsilon}{\rho}\big)^{d-1}\Big)\\
&\leq &2C^2D^{d-1}.
\end{eqnarray*}
The proof is completed. 
\end{proof}

\section{Microscopic uniqueness for the vacant set of several Wiener sausages}\label{sec:uniqueness-WS}

Let $W^{(1)},W^{(2)},\ldots$ be independent Brownian motions in $\R^d$ ($d\geq 3$). For $y_1,\ldots, y_K\in\R^d$, we write $\mathsf P_{y_1,\ldots, y_K}$ for the law of $W^{(1)},\ldots, W^{(K)}$ when $W^{(1)}_0=y_1,\ldots, W^{(K)}_0 = y_K$. Let
\[
\mathcal V_K(W^{(1)},\ldots, W^{(K)}) = \R^d\setminus\Big(\bigcup\limits_{k=1}^K\bigcup\limits_{t_k=0}^\infty B\big(W^{(k)}_{t_k},1\big)\Big)
\]
be the set of points in $\R^d$ not contained in any Wiener sausage of radius $1$ around the Brownian motions $W^{(1)},\ldots, W^{(K)}$. We refer to $\mathcal V_K$ as the \emph{vacant set}.

\smallskip

In this section we study the probability that the vacant set $\mathcal V_K$ contains at least $2$ connected components in each of several well separated microscopic balls. 

\begin{theorem}\label{thm:uniqueness-severalballs}
Let $J\geq 1$ and $x_1,\ldots, x_J\in\R^d$ such that $\|x_j - x_{j'}\|>6$ for all $j\neq j'$. 

Let $\varepsilon\in(0,1)$ and $K\geq 1$. For $1\leq j\leq J$, let $A_{K,j}(W^{(1)},\ldots, W^{(K)})$ be the event that $\mathcal V_K(W^{(1)},\ldots, W^{(K)})\cap B(x_j,\varepsilon)$ contains at least $2$ connected components. 

Then there exist $C=C(d)$ and $m=m(d)$ such that for all $\varepsilon$, $K$, $x_1,\ldots, x_J$ and for all $y_1,\ldots, y_K\in \bigcup\limits_{j=1}^J \partial B(x_j,2)$, 
\begin{equation}\label{eq:uniqueness-severalballs}
\mathsf P_{y_1,\ldots, y_K}\Big[\bigcap\limits_{j=1}^J A_{K,j}(W^{(1)},\ldots, W^{(K)})\Big]\leq \Big(C\log^m\big(\tfrac1\varepsilon\big)\varepsilon^{d+1}\Big)^J.
\end{equation}
\end{theorem}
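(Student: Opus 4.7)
My plan is to combine the one-ball microscopic nonuniqueness bound (which itself comes from the hemiball reduction of Lemma~\ref{l:uniqueness-hemiball} together with the hitting estimate of Theorem~\ref{thm:hitting-all-hemiballs}) with an ``independence across the $J$ balls'' factorization. The geometric input that makes this possible is the hypothesis $\|x_j-x_{j'}\|>6$, which guarantees that the closed balls $B(x_j,3)$ are pairwise disjoint; a Brownian path can contribute to the $j$-th event only through visits which first hit $\partial B(x_j,2)$ from the complement of $B(x_j,3)$.

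First, I would apply Lemma~\ref{l:uniqueness-hemiball} to each $A_{K,j}$: on this event none of the $W^{(k)}$ enters $B(x_j,1-\varepsilon)$ and every $\varepsilon$-hemiball of $B(x_j,1)$ (in the translated sense of Definition~\ref{def:hemiball}) is visited by at least one of the $W^{(k)}$. In particular, only those excursions of the $W^{(k)}$ that reach $B(x_j,1+\varepsilon)$ are relevant near the $j$-th microscopic ball, and each such excursion must enter and exit $B(x_j,3)$.

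Next I would decompose each path $W^{(k)}$ into its successive ``visits'' to $\bigcup_j B(x_j,3)$: introduce the alternating hitting times $0\le \sigma_1^{(k)}<\rho_1^{(k)}<\sigma_2^{(k)}<\cdots$ of $\bigcup_j\partial B(x_j,2)$ and of $\bigcup_j\partial B(x_j,3)$, and for each $m$ let $J_m^{(k)}\in\{1,\dots,J\}$ be the unique index such that $W^{(k)}|_{[\sigma_m^{(k)},\rho_m^{(k)}]}\subset B(x_{J_m^{(k)}},3)$; uniqueness uses the disjointness of the $B(x_j,3)$. By the strong Markov property at the $\sigma_m^{(k)}$'s, the pieces of path in different visits are conditionally independent given the visit pattern $\{(J_m^{(k)},W^{(k)}_{\sigma_m^{(k)}},W^{(k)}_{\rho_m^{(k)}})\}$. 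Fixing $j$, each visit with $J_m^{(k)}=j$ must avoid $B(x_j,1-\varepsilon)$, which by Lemma~\ref{l:BM-hitting} costs a factor $O(\varepsilon)$; collectively their Wiener sausages must cover every $\varepsilon$-hemiball of $B(x_j,1)$. If the number of relevant visits to $B(x_j,2)$ is at most $d$, a translate of Theorem~\ref{thm:hitting-all-hemiballs}, extended to arbitrary prescribed entry points on the outer sphere (this is the content of Sections~\ref{sec:nonuniqueness-reduction-to-hemiballs} and \ref{sec:uniqueness-excursions}), provides a conditional bound $C\log^m(1/\varepsilon)\varepsilon^{d+1}$; if more than $d$ visits are relevant, the avoidance factors alone already yield at least $(C\varepsilon)^{d+1}$.

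Multiplying the per-ball bounds and using the conditional independence from Step~2 yields $(C\log^m(1/\varepsilon)\varepsilon^{d+1})^J$, with the sum over visit patterns contributing only a dimension-dependent prefactor, since transience of $W$ in $d\geq 3$ (Lemma~\ref{l:BM-hitting}) forces each Brownian motion to return to $\bigcup_j\partial B(x_j,2)$ only a geometrically-tailed number of times. The hard part will be the rigorous adaptation of the cascade machinery of Section~\ref{sec:hitting-hemiballs} to excursions entering $B(x_j,2)$ at prescribed rather than uniform points, and the combinatorial bookkeeping needed to display the product structure over visit patterns without destroying the polylogarithmic control; both are handled in the sections preceding Section~\ref{sec:uniqueness-severalballs-proof}.
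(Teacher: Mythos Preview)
Your proposal is correct and follows essentially the same approach as the paper: decompose each Brownian motion into excursions between $\bigcup_j\partial B(x_j,2)$ and $\bigcup_j\partial B(x_j,3)$, use the strong Markov property to factorize over the $J$ disjoint balls, and apply the local one-ball bound (Theorem~\ref{thm:uniqueness:excursions}, which packages the hemiball reduction and Theorem~\ref{thm:hitting-all-hemiballs} exactly as you describe) to each factor.

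One small simplification relative to what you sketched: you do not need the geometric tail on the number of returns. Since the bound of Theorem~\ref{thm:uniqueness:excursions} is \emph{uniform} in the number $n_j$ of excursions at the $j$-th ball, after pulling out the product $\prod_j C\log^m(1/\varepsilon)\varepsilon^{d+1}$ the remaining sum over visit-count patterns $(n_1,\dots,n_J)$ is simply bounded by $1$ (it is a sum of probabilities of disjoint events). No transience-based tail estimate is required at this step.
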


In Section~\ref{sec:uniqueness-WS-oneball-hemiballs} we show in a deterministic result how to reduce the nonuniqueness question for one ball to a question about hitting all $\varepsilon$-hemiballs of a ball by Brownian motions (see Lemma~\ref{l:uniqueness-hemiball}), which allows to deduce a version of Theorem~\ref{thm:uniqueness-severalballs} for one ball ($J=1$) from Theorem~\ref{thm:hitting-all-hemiballs}.
In order to decouple correlations between events in \eqref{eq:uniqueness-severalballs}, caused by Brownian motions that visit multiple $(1+\varepsilon)$-balls, we prove a local version of the result for one ball, formulated in terms of Brownian excursions (see Theorem~\ref{thm:uniqueness:excursions} in Section~\ref{sec:uniqueness-excursions}). Having done this, Theorem~\ref{thm:uniqueness-severalballs} follows rather directly by an application of the strong Markov property (see Section~\ref{sec:uniqueness-severalballs-proof}). 

\subsection{Case of one ball: Nonuniqueness and hitting of hemiballs}\label{sec:uniqueness-WS-oneball-hemiballs}

In this section we make the first step in the study of nonuniqueness for one ball ($J=1$). 
We prove a deterministic result that the nonuniqueness event implies that all $\varepsilon$-hemiballs of $B(1+\varepsilon)$ (recall Definition~\ref{def:hemiball}) have to be visited by the Brownian motions. This will enable us to apply Theorem~\ref{thm:hitting-all-hemiballs} in the next subsection.

\smallskip

We write $\mathcal V_K$ for $\mathcal V_K(W^{(1)},\ldots, W^{(K)})$.

\begin{lemma}\label{l:uniqueness-hemiball}
Let $\varepsilon\in(0,1)$. Let $W^{(1)},\ldots, W^{(K)}$ be Brownian motions. If $\mathcal V_K\cap B(\varepsilon)$ contains at least $2$ connected components, then 
\begin{enumerate}
\item
every $A_{e,\varepsilon}(1+\varepsilon)$ is hit by at least one of the Brownian motions $W^{(1)},\ldots, W^{(K)}$;
\item
$B(1-\varepsilon)$ is not visited by any of the Brownian motions $W^{(1)},\ldots, W^{(K)}$.
\end{enumerate}
\end{lemma}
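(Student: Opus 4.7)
Part (2) will be immediate from the triangle inequality: if some $W^{(k)}$ ever enters $B(1-\varepsilon)$ at a point $x$, then every $y \in B(\varepsilon)$ satisfies $\|y-x\| \leq \|y\| + \|x\| \leq \varepsilon + (1-\varepsilon) = 1$, so $B(\varepsilon) \subseteq B(x, 1)$ and $\mathcal V_K \cap B(\varepsilon) = \emptyset$, which is incompatible with the existence of two components.

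For part (1) I will prove the contrapositive: assuming there exists a unit vector $e$ such that no Brownian motion visits $A_{e,\varepsilon}(1+\varepsilon)$, I will conclude that $\mathcal V_K \cap B(\varepsilon)$ has at most one connected component. Only Brownian points $w$ with $\|w\| \leq 1+\varepsilon$ affect $\mathcal V_K \cap B(\varepsilon)$, and the hypothesis forces every such $w$ to satisfy $\langle w, e\rangle < -\varepsilon$. The key lemma will be a $+e$-monotonicity of the vacant set: for any $y \in \mathcal V_K \cap B(\varepsilon)$ and any $s \geq 0$ with $y+se \in B(\varepsilon)$, one has $y + se \in \mathcal V_K$. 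Indeed, for any such Brownian point $w$, the bounds $\langle y, e\rangle \geq -\|y\| \geq -\varepsilon$ and $\langle w, e\rangle < -\varepsilon$ give $\langle y - w, e\rangle > 0$, so
\[
\|y + se - w\|^2 = \|y - w\|^2 + 2s\,\langle y - w, e\rangle + s^2 \geq \|y - w\|^2 > 1,
\]
and hence $y + se \notin B(w, 1)$ for every relevant $w$.

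Writing $r(y) = \sup\{s \geq 0 : y + se \in B(\varepsilon)\}$, the continuous homotopy $H_t(y) = y + t\,r(y)\,e$ is then a deformation retraction of $\mathcal V_K \cap B(\varepsilon)$ onto $\mathcal V_K \cap H^+$, where $H^+ := \partial B(\varepsilon) \cap \{\langle \cdot, e\rangle \geq 0\}$ is the upper hemisphere. Consequently the two spaces have the same number of connected components, reducing the problem to showing that $\mathcal V_K \cap H^+$ is connected.

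The last step is where I expect the main obstacle. Each covered cap $B(w,1) \cap H^+$ is the intersection on the sphere $\partial B(\varepsilon)$ of a spherical half-ball $\{y : \langle y, w\rangle \geq c_w\}$ (with $c_w = (\varepsilon^2 + \|w\|^2 - 1)/2$) and the hemisphere $H^+$, and as the intersection of two spherically convex sets it is itself spherically convex. Moreover, because $\langle w, e\rangle < -\varepsilon$, the point of $H^+$ closest to $w$ lies on the equator $\partial H^+$ at $\varepsilon w_\perp/\|w_\perp\|$, so every nonempty covered cap on $H^+$ is anchored to the equator. I plan to exploit this structure—a union of spherically convex caps each attached to $\partial H^+$—to show that the vacant complement on $H^+$ is always connected, the intuition being that no such cap can enclose a vacant island in the interior of $H^+$ without first reaching the equator. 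Together with the deformation retraction above, this will yield the claimed uniqueness of the connected component in $\mathcal V_K \cap B(\varepsilon)$.
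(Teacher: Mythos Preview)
Your Part~(2) and the $+e$-monotonicity lemma are correct and match the paper verbatim; the paper likewise reduces Part~(1) to showing that any two points of $\mathcal V_K\cap H^+$ are connected there (it does not phrase this as a deformation retraction, but the content is the same).

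The gap is in your final step. First, the spherical-convexity claim is false as stated: when $1-\varepsilon<\|w\|<\sqrt{1-\varepsilon^2}$ one has $c_w<0$, so $\{y\in\partial B(\varepsilon):\langle y,w\rangle\geq c_w\}$ exceeds a hemisphere and is not geodesically convex. Concretely, if $w=-ce$ with $c$ in that range, then $B(w,1)\cap H^+$ is an equatorial band $\{y\in H^+:0\leq\langle y,e\rangle\leq |c_w|/c\}$, which contains antipodal equatorial points whose geodesic through the north pole leaves the band. Second, even granting convexity and equator-anchoring, you have offered only an intuition, not an argument, for why the complement on $H^+$ must be connected; this is precisely the place you flagged as the main obstacle, and it remains one.

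The paper sidesteps the topology with a direct computation that supplies exactly your missing connecting path. For $x,y\in\mathcal V_K\cap H^+$ and $\lambda\in(0,1)$, set $z_\lambda=\lambda x+(1-\lambda)y+\mu_\lambda e$, where $\mu_\lambda>0$ is the unique value with $z_\lambda\in\partial B(\varepsilon)$. Expanding $\|z_\lambda-a\|^2$ for arbitrary $a$ and then specializing to $a=0$ (using $\|x\|=\|y\|=\|z_\lambda\|=\varepsilon$) to eliminate the quadratic-in-$\mu_\lambda$ terms yields the identity
\[
\|z_\lambda-a\|^2=\lambda\|x-a\|^2+(1-\lambda)\|y-a\|^2-2\mu_\lambda\langle a,e\rangle.
\]
For every relevant Brownian position $a$ one has $\langle a,e\rangle\leq-\varepsilon$, so the last term is nonnegative and the convex combination exceeds~$1$; hence $z_\lambda\in\mathcal V_K$ and $\lambda\mapsto z_\lambda$ connects $x$ to $y$ in $\mathcal V_K\cap H^+$. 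In other words, the right notion of convexity here is not geodesic: for each such $a$ it is the \emph{vacant} region $\{\|{\cdot}-a\|>1\}\cap H^+$, not the covered cap, that is closed under the lifted-chord operation $(x,y,\lambda)\mapsto z_\lambda$.
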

\begin{proof}
Statement (2) is trivial, since if a Brownian motion visits $B(1-\varepsilon)$, then the respective Wiener sausage completely covers $B(\varepsilon)$, but $\mathcal V_K\cap B(\varepsilon)$ is non-empty by assumption. 

\smallskip

We proceed with the proof of statement (1). 
Assume on the contrary that, for some $e \in\R^d$ with $\|e\|=1$, $\varepsilon$-hemiball $A_{e,\varepsilon}(1+\varepsilon)$ is not hit by the Brownian motions $W^{(1)},\ldots, W^{(K)}$.

Let $x,y\in \mathcal V_K\cap B(\varepsilon)$. We want to show that $x$ and $y$ belong to the same connected component of $\mathcal V_K\cap B(\varepsilon)$.

First note that if $x\in\mathcal V_K\cap B(\varepsilon)$, then any $\widetilde x = x + \mu e$ with $\mu>0$ contained in $B(\varepsilon)$ is also vacant: $\widetilde x\in\mathcal V_K$. Indeed, it suffices to prove that for any $a\notin A_{e,\varepsilon}(1+\varepsilon)$ such that $B(a,1)\not\ni x$, it also holds that $B(a,1)\not\ni \widetilde x$; in fact, it suffices to prove this for $a$ such that $\langle a,e\rangle \leq -\varepsilon$, since otherwise $B(a,1)\cap B(\varepsilon)=\emptyset$. Assume that $B(a,1)\not\ni x$, that is $\|x-a\|>1$. Then 
\begin{eqnarray*}
\|\widetilde x-a\|^2 &= &\langle \widetilde x-a,\widetilde x-a\rangle = 
\langle x+ \mu e - a, x+\mu e - a\rangle\\
&= &\|x-a\|^2 + \mu^2 + 2\mu\big(\langle x,e\rangle - \langle a,e\rangle\big).
\end{eqnarray*}
Since $x\in B(\varepsilon)$, we have $\langle x,e\rangle\geq -\varepsilon$; thus, $\langle x,e\rangle - \langle a,e\rangle\geq 0$. As a result, $\|\widetilde x-a\|\geq \|x-a\|>1$. 

Thus, it suffices to prove that any two points of $\mathcal V_K$ that lie on the ``upper'' hemisphere of $\partial B(\varepsilon)$ in the direction of $e$ are connected in $\mathcal V_K\cap B(\varepsilon)$:

Let $x,y\in\mathcal V_K\cap\partial B(\varepsilon)$ such that $\langle x,e\rangle\geq 0$ and $\langle y,e\rangle\geq 0$. It suffices to prove that $x$ and $y$ are connected in $\mathcal V_K\cap \partial B(\varepsilon)$. 

By convexity, for any $\lambda\in(0,1)$, $\lambda x+ (1-\lambda)y\in B(\varepsilon)$ and there is unique $\mu_\lambda>0$ such that 
\[
z_\lambda = \lambda x + (1-\lambda) y + \mu_\lambda e\in\partial B(\varepsilon). 
\]
Since the set $\{z_\lambda\,:\,\lambda\in[0,1]\}\subseteq\partial B(\varepsilon)$ is connected and contains $x$ and $y$, it suffices to prove that for every $\lambda\in(0,1)$, $z_\lambda\in\mathcal V_K$; this would follow from the following: 
\begin{equation}\label{eq:zlambda-vacant}
\begin{array}{c}\text{For every $a\in\R^d$ with $\langle a,e\rangle\leq -\varepsilon$, if $\|x-a\|>1$ and $\|y-a\|>1$,}\\ 
 \text{then $\|z_\lambda-a\|>1$ for all $\lambda\in(0,1)$.}
\end{array}
\end{equation}
We have for any $\overline a\in\R^d$ (using $2\langle x-\overline a,y-\overline a\rangle = \|x-\overline a\|^2 + \|y-\overline a\|^2 - \|x-y\|^2$)
\begin{eqnarray*}
\|z_\lambda-\overline a\|^2 &= &\langle z_\lambda - \overline a,z_\lambda - \overline a\rangle\\
&= &\lambda\|x-\overline a\|^2 + (1-\lambda)\|y-\overline a\|^2 - 2\mu_\lambda\langle \overline a,e\rangle\\
&+ &\mu_\lambda^2 - \lambda(1-\lambda) \|x-y\|^2 + 2\lambda\mu_\lambda\langle x,e\rangle + 2(1-\lambda)\mu_\lambda\langle y,e\rangle .
\end{eqnarray*}
When $\overline a=0$, we get (since $x,y,z_\lambda\in\partial B(\varepsilon)$)
\begin{eqnarray*}
\varepsilon^2 &= &\|z_\lambda\|^2 \\
&= &\underbrace{\lambda\|x\|^2 + (1-\lambda)\|y\|^2}_{=\varepsilon^2} + \mu_\lambda^2 - \lambda(1-\lambda) \|x-y\|^2 + 2\lambda\mu_\lambda\langle x,e\rangle + 2(1-\lambda)\mu_\lambda\langle y,e\rangle.
\end{eqnarray*}
Thus, 
\[
\mu_\lambda^2 - \lambda(1-\lambda) \|x-y\|^2 + 2\lambda\mu_\lambda\langle x,e\rangle + 2(1-\lambda)\mu_\lambda\langle y,e\rangle = 0
\]
and the above equality becomes
\[
\|z_\lambda-\overline a\|^2 = \lambda\|x-\overline a\|^2 + (1-\lambda)\|y-\overline a\|^2 - 2\mu_\lambda\langle \overline a,e\rangle, 
\]
from which \eqref{eq:zlambda-vacant} immediately follows. 
The proof of Lemma~\ref{l:uniqueness-hemiball} is completed. 
\end{proof}

\subsection{Case of one ball: Reduction to Theorem~\ref{thm:hitting-all-hemiballs}}\label{sec:nonuniqueness-reduction-to-hemiballs}

In this section we study nonuniqueness in $B(\varepsilon)$ in the case, when the Brownian motions all start from uniform positions on $\partial B(1+\varepsilon)$. We show here how this result follows from Theorem~\ref{thm:hitting-all-hemiballs} about the probability that all $\varepsilon$-hemiballs of $B(1)$ are visited by the Brownian motions. In the subsequent sections we derive Theorem~\ref{thm:uniqueness-severalballs} from this special case for one ball.

\begin{theorem}\label{thm:uniqueness:oneball}
Let $\varepsilon\in(0,1)$. Assume that $W^{(1)},\ldots, W^{(K)}$ start from uniform points on $\partial B(1+\varepsilon)$. 
Then there exist $C=C(d)$ and $m=m(d)$, such that for all $\varepsilon$ and $K$,
\[
\mathsf P\Big[\begin{array}{c}
\text{$\mathcal V_K(W^{(1)},\ldots, W^{(K)})\cap B(\varepsilon)$ contains}\\
\text{at least $2$ connected components}\end{array}
\Big]\leq C\log^m(\tfrac1\varepsilon)\, \varepsilon^{d+1}.
\]
\end{theorem}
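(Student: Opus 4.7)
The plan is to combine the deterministic reduction of Lemma~\ref{l:uniqueness-hemiball}, the scaling invariance of Brownian motion, and Theorem~\ref{thm:hitting-all-hemiballs}, treating separately the regimes $1\leq K\leq d$ and $K\geq d+1$; the case $K=0$ is trivial, since then $\mathcal V_0\cap B(\varepsilon)=B(\varepsilon)$ is connected.

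For $1\leq K\leq d$, I would first apply Lemma~\ref{l:uniqueness-hemiball} to enclose the nonuniqueness event in the event that every $\varepsilon$-hemiball in $\mathcal A_\varepsilon(1+\varepsilon)$ is hit by some $W^{(k)}$ while none of the $W^{(k)}$ visits $B(1-\varepsilon)$. Applying the scaling $x\mapsto x/(1+\varepsilon)$, the rescaled Brownian motions start from uniform points on $\partial B(1)$; writing $\varepsilon':=\varepsilon/(1+\varepsilon)$, the rescaled event requires every $\varepsilon'$-hemiball of $B(1)$ to be hit and $B(1-2\varepsilon')$ (the image of $B(1-\varepsilon)$) to be avoided. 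Since larger $\delta$ gives larger hemiballs (Definition~\ref{def:hemiball}), each $\varepsilon'$-hemiball is contained in the corresponding $(2\varepsilon')$-hemiball, so the rescaled event is contained in the event of Theorem~\ref{thm:hitting-all-hemiballs} with parameter $2\varepsilon'$; the theorem then yields a bound of the form $\leq C\log^m(\tfrac{1}{2\varepsilon'})(2\varepsilon')^{d+1}\leq C'\log^m(\tfrac1\varepsilon)\varepsilon^{d+1}$.

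For $K\geq d+1$ the hemiball reduction is superfluous: by Lemma~\ref{l:uniqueness-hemiball}(2) the nonuniqueness event already forces none of the $K$ independent Brownian motions to visit $B(1-\varepsilon)$. Lemma~\ref{l:BM-hitting} gives, uniformly in $x\in\partial B(1+\varepsilon)$,
\[
\mathsf P_x[T_{1-\varepsilon}=+\infty]=1-\Bigl(\tfrac{1-\varepsilon}{1+\varepsilon}\Bigr)^{d-2}\leq C(d)\,\varepsilon,
\]
so by independence the probability of the nonuniqueness event is at most $(C\varepsilon)^K\leq (C\varepsilon)^{d+1}$, which is already stronger than what we need.

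Essentially all of the real difficulty has been packaged into Theorem~\ref{thm:hitting-all-hemiballs} and the deterministic Lemma~\ref{l:uniqueness-hemiball}; the main points to be careful about are (i) tracking constants through the single rescaling by $1/(1+\varepsilon)$, which forces passing from $\varepsilon'$ to $2\varepsilon'$ in order to accommodate the fact that the image of $B(1-\varepsilon)$ is $B(1-2\varepsilon')$ rather than $B(1-\varepsilon')$, and (ii) handling the regime $K\geq d+1$ separately, since Theorem~\ref{thm:hitting-all-hemiballs} is stated only for $K\leq d$.
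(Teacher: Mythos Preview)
Your proposal is correct and follows essentially the same approach as the paper: the split into $K\leq d$ and $K\geq d+1$, the use of Lemma~\ref{l:uniqueness-hemiball}, the rescaling by $1/(1+\varepsilon)$, and the application of Theorem~\ref{thm:hitting-all-hemiballs} with parameter $2\varepsilon'=2\varepsilon/(1+\varepsilon)$ all match the paper's proof exactly (the paper calls this parameter $\widetilde\varepsilon$). The only cosmetic difference is that the paper phrases the inclusion as $\tfrac{1}{1+\varepsilon}A_{e,\varepsilon}(1+\varepsilon)=A_{e,\widetilde\varepsilon/2}(1)\subseteq A_{e,\widetilde\varepsilon}(1)$, which is the same observation you make.
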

\begin{proof}
By Lemma~\ref{l:uniqueness-hemiball}, to prove Theorem~\ref{thm:uniqueness:oneball} it suffices to show that 
if independent Brownian motions $W^{(1)},\ldots, W^{(K)}$ start from uniform points on $\partial B(1+\varepsilon)$, then 
\begin{equation}\label{eq:hit-hemiball-main}
\mathsf P\left[
\begin{array}{l}
\text{every $\varepsilon$-hemiball in $\mathcal A_{\varepsilon}(1+\varepsilon)$ is hit by at least one}\\
\text{of the Brownian motions and $B(1-\varepsilon)$ is not visited}\\ 
\text{by any of the Brownian motions $W^{(1)},\ldots, W^{(K)}$}
\end{array}
\right]\leq C\log^m(\tfrac1\varepsilon)\, \varepsilon^{d+1}.
\end{equation}
This is trivial, when $K\geq d+1$, since by Lemma~\ref{l:BM-hitting},
\begin{eqnarray*}
\mathsf P\left[\begin{array}{c}\text{$B(1-\varepsilon)$ is not visited by any}\\ \text{of the Brownian motions}\end{array}\right] &= &\mathsf P\big[\text{$W^{(1)}$ does not visit $B(1-\varepsilon)$}\big]^K\\
&= &(1 - (1-\varepsilon)^{d-2})^K
\leq (1 - (1-\varepsilon)^{d-2})^{d+1}\\ &\leq &C\varepsilon^{d+1}.
\end{eqnarray*}

When $1\leq K\leq d$, \eqref{eq:hit-hemiball-main} follows from Theorem~\ref{thm:hitting-all-hemiballs} by rescaling. 

Indeed, let $\widetilde W^{(1)} = \tfrac{1}{1+\varepsilon}W^{(1)},\ldots, \widetilde W^{(K)} = \tfrac{1}{1+\varepsilon}W^{(K)}$. By scale invariance of the Brownian motion, $\widetilde W^{(1)},\ldots, \widetilde W^{(K)}$ are independent Brownian motion started from uniform points on $\partial B(1)$. Let $\widetilde \varepsilon = \tfrac{2\varepsilon}{1+\varepsilon}$. Note that 
\[
\tfrac{1}{1+\varepsilon}B(1-\varepsilon) = B(1-\widetilde\varepsilon)\quad\text{and}\quad
\tfrac{1}{1+\varepsilon}A_{e,\varepsilon}(1+\varepsilon) = A_{e,\tfrac12\widetilde\varepsilon}(1)\subseteq A_{e,\widetilde\varepsilon}(1).
\]
Hence, the probability in \eqref{eq:hit-hemiball-main} is bounded from above by 
\[
\mathsf P\left[
\begin{array}{l}
\text{every $\widetilde\varepsilon$-hemiball in $\mathcal A_{\widetilde\varepsilon}(1)$ is hit by at least one of the}\\
\text{Brownian motions $\widetilde W^{(1)},\ldots, \widetilde W^{(K)}$ and $B(1-\widetilde\varepsilon)$ is not}\\ 
\text{visited by any of the Brownian motions $\widetilde W^{(1)},\ldots, \widetilde W^{(K)}$}
\end{array}
\right].
\]
By Theorem~\ref{thm:hitting-all-hemiballs}, the above probability is bounded from above by $C\log^m(\tfrac{1}{\widetilde\varepsilon})\, (\widetilde \varepsilon)^{d+1}$. Since $\varepsilon\leq \widetilde\varepsilon\leq 2\varepsilon$, \eqref{eq:hit-hemiball-main} follows. The proof is completed.
\end{proof}

\subsection{Case of one ball: Local version}\label{sec:uniqueness-excursions}

For $K\geq 1$, let $W^{(1)}, \ldots, W^{(K)}$ be independent Brownian motions in $\R^d$ ($d\geq 3$) starting on $\partial B(2)$. 
For $r>0$, let $T^{(k)}_r = \inf\{t\geq 0\,:\,W^{(k)}_t\in\partial B(r)\}$. 
Let
\[
\widehat{\mathcal V}_K(W^{(1)},\ldots, W^{(K)}) = \R^d\setminus\Big(\bigcup\limits_{k=1}^K\bigcup\limits_{t_k=0}^{T^{(k)}_3} B\big(W^{(k)}_{t_k},1\big)\Big)
\]
be the set of points in $\R^d$ not contained in any Wiener sausage of radius $1$ around the Brownian motions $W^{(1)},\ldots, W^{(K)}$ stopped upon the first exit time from $B(3)$.

\begin{theorem}\label{thm:uniqueness:excursions}
Let $\varepsilon\in(0,1)$. Let $\widehat A_K(W^{(1)},\ldots, W^{(K)})$ be the event that the number of connected components in $\widehat{\mathcal V}_K(W^{(1)},\ldots, W^{(K)})\cap B(\varepsilon)$ is at least $2$. 
Then there exist $C=C(d)$ and $m=m(d)$, such that for all $\varepsilon$, $K$ and $y_1,\ldots, y_K\in\partial B(2)$,
\[
\mathsf P_{y_1,\ldots, y_K}\big[\widehat A_K(W^{(1)},\ldots, W^{(K)})\big]\leq C\log^m(\tfrac1\varepsilon)\, \varepsilon^{d+1}.
\]
\end{theorem}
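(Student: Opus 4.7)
The plan is to follow the template of Theorem~\ref{thm:uniqueness:oneball}, reducing to Theorem~\ref{thm:hitting-all-hemiballs} via Lemma~\ref{l:uniqueness-hemiball} and the strong Markov property, with an additional extension-and-strong-Markov step to handle the stopping at $T_3^{(k)}$ (rather than running the Brownian motions forever) and the fact that the starts $y_k$ are on $\partial B(2)$ rather than on $\partial B(1+\varepsilon)$.

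First I will apply Lemma~\ref{l:uniqueness-hemiball} to get $\widehat A_K \subseteq \widehat E \cap \widehat F$, where $\widehat E$ asserts that every $\varepsilon$-hemiball of $B(1+\varepsilon)$ is visited by some $W^{(k)}$ before $T_3^{(k)}$ and $\widehat F$ asserts that no $W^{(k)}$ visits $B(1-\varepsilon)$ before $T_3^{(k)}$. The portion of each $W^{(k)}$ before its first entry $\tau^{(k)}$ into $B(1+\varepsilon)$ lies outside $B(1+\varepsilon)$ and contributes nothing to either event; so, letting $S=\{k:\tau^{(k)}<T_3^{(k)}\}$ and $z^{(k)}=W^{(k)}_{\tau^{(k)}}$ for $k\in S$ and using the strong Markov property at each $\tau^{(k)}$, the post-entry fragments are conditionally independent Brownian motions from the $z^{(k)}$'s stopped at the next exit from $B(3)$. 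By Lemma~\ref{l:poisson-formula} their hitting densities on $\partial B(1+\varepsilon)$ are bounded pointwise by a dimensional constant $C_*$ times the uniform density, so summing over $S$ and integrating out the entry points reduces the theorem to
\[
\mathsf P^{(3)}_{\mathrm{unif},K'}\bigl[E'\cap F'\bigr]\leq C\log^{m}(1/\varepsilon)\,\varepsilon^{d+1},\qquad K'\geq 1,
\]
where $\mathsf P^{(3)}_{\mathrm{unif},K'}$ denotes the law of $K'$ independent Brownian motions started uniformly on $\partial B(1+\varepsilon)$ and stopped at first exit from $B(3)$, and $E',F'$ are the analogs of $\widehat E,\widehat F$. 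The combinatorial factor $\binom{K}{K'}C_*^{K'}$ from summation over $S$ will be controlled uniformly in $K$ by the accompanying exponential factor $(1-p_\varepsilon)^{K-K'}$, where $p_\varepsilon:=\mathsf P_{y_k}[\tau^{(k)}<T_3^{(k)}]\in(0,1)$ is bounded away from the endpoints uniformly as $\varepsilon\to0$ by Lemma~\ref{l:BM-hitting} and transience.

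To prove the displayed bound I will distinguish two regimes. In the regime $K'\geq d+1$, Lemma~\ref{l:BM-hitting} gives $\mathsf P_z[T_3<T_{1-\varepsilon}]\leq C\varepsilon$ uniformly in $z\in\partial B(1+\varepsilon)$, so by independence $\mathsf P^{(3)}_{\mathrm{unif},K'}[F']\leq (C\varepsilon)^{K'}\leq (C\varepsilon)^{d+1}$. In the substantive regime $1\leq K'\leq d$, I will extend each stopped motion $V^{(k)}$ past $T_3^{(k)}$ to an infinite-time Brownian motion $\widetilde V^{(k)}$ from the same starting point, and denote the corresponding infinite-time hemiball-hitting and avoidance events by $E^\infty,F^\infty$. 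Then $E'\subseteq E^\infty$, and by the strong Markov property at each $T_3^{(k)}$ together with Lemma~\ref{l:BM-hitting}, each independent post-$T_3$ continuation avoids $B(1-\varepsilon)$ with probability at least $c:=1-3^{2-d}>0$, so, using $\mathcal F_{T_3}$-measurability of $E'\cap F'$, $\mathsf P[F^\infty\mid E'\cap F']\geq c^{K'}\geq c^d$. This yields
\[
\mathsf P^{(3)}_{\mathrm{unif},K'}\bigl[E'\cap F'\bigr]\leq c^{-d}\,\mathsf P^{\infty}_{\mathrm{unif},K'}\bigl[E^\infty\cap F^\infty\bigr]\leq C\log^{m}(1/\varepsilon)\,\varepsilon^{d+1},
\]
with the last inequality following from~\eqref{eq:hit-hemiball-main} (the key step in the proof of Theorem~\ref{thm:uniqueness:oneball}) after rescaling the extensions by $1/(1+\varepsilon)$.

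The main obstacle will be the fact that the $T_3$-truncation shrinks $\widehat E$ (harder to visit all hemiballs with less trajectory) but enlarges $\widehat F$ (easier to avoid $B(1-\varepsilon)$), so neither $\widehat E\cap\widehat F\subseteq E^\infty\cap F^\infty$ nor the reverse holds. The extend-and-strong-Markov argument sketched above overcomes this at a cost of the dimensional constant $c^{-d}$, which is finite only because Brownian motion in $d\geq 3$ is transient (so $c>0$) and because the number of active (entering) motions can be restricted to $K'\leq d$ in the delicate regime.
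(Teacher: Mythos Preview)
Your overall strategy matches the paper's closely: decompose by the set $S$ of motions that enter $B(1+\varepsilon)$ before leaving $B(3)$, compare the conditional entry law to the uniform law via the Poisson formula, split into $|S|\le d$ versus $|S|\ge d+1$, and for small $|S|$ pass from stopped motions to infinite-time motions in order to invoke \eqref{eq:hit-hemiball-main}. Your extension step (conditioning on the post-$T_3$ continuations avoiding $B(1-\varepsilon)$, which succeeds with probability $\ge c^{K'}$) is a valid variant of the paper's device (conditioning on the post-$T_3$ continuations never re-entering $B(1+\varepsilon)$, on which $\widehat A_k$ coincides with the infinite-time event $A_k$).

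There is, however, a genuine gap in your combinatorial bookkeeping. The claim that ``$\binom{K}{K'}C_*^{K'}$ is controlled uniformly in $K$ by the accompanying exponential factor $(1-p_\varepsilon)^{K-K'}$'' is false: together with the omitted factor $p_\varepsilon^{K'}$ from $\mathsf P[\{k:\tau^{(k)}<T_3^{(k)}\}=S]$, one has
\[
\sum_{K'=0}^{K}\binom{K}{K'}(C_*p_\varepsilon)^{K'}(1-p_\varepsilon)^{K-K'}=\bigl(1+(C_*-1)p_\varepsilon\bigr)^K,
\]
which diverges as $K\to\infty$ since $C_*>1$. Hence the displayed reduction to a bound on $\mathsf P^{(3)}_{\mathrm{unif},K'}[E'\cap F']$ that is \emph{uniform in $K'$} is not sufficient. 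The paper sidesteps this by applying the Radon--Nikodym comparison $Q_i\le C\,\overline Q$ \emph{only} when $k\le d$ (so the factor is at most $C^{2d}$), and for $k\ge d+1$ bounding the conditional probability $\mathsf P\bigl[\widehat A_K\mid T^{(i)}_{1+\varepsilon}<T^{(i)}_3\text{ iff }i\in I\bigr]$ directly by the avoidance estimate $\mathsf P_{x_1,\ldots,x_k}[T_3^{(i)}<T_{1-\varepsilon}^{(i)}\ \forall i]\le (C\varepsilon)^{d+1}$, which holds for arbitrary starting points on $\partial B(1+\varepsilon)$ and thus needs no change of measure. The conditional probability is then bounded by a constant $C(d)\log^m(1/\varepsilon)\varepsilon^{d+1}$ uniformly in $k$, and summing the weights gives $\sum_k\sum_{|I|=k}\mathsf P[\ldots]=\mathsf P[N\ge 1]\le 1$. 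Your route can also be repaired if, for $K'\ge d+1$, you keep the full bound $(C\varepsilon)^{K'}$ rather than weakening it to $(C\varepsilon)^{d+1}$; then the geometric factor $(C_*C\varepsilon)^{K'}$ absorbs the binomial growth, but this requires an argument you have not supplied.
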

\begin{proof}
Let $N$ be the number of Brownian motions that hit $B(1+\varepsilon)$ before leaving $B(3)$. 
Note that $N\geq 1$ if $\widehat A_K(W^{(1)},\ldots, W^{(K)})$ occurs. 
We have 
\begin{multline*}
\mathsf P_{y_1,\ldots, y_K}\big[\widehat A_K(W^{(1)},\ldots, W^{(K)})\big] = \sum\limits_{k=1}^K \mathsf P_{y_1,\ldots, y_K}\big[\widehat A_K(W^{(1)},\ldots, W^{(K)}), N=k\big]\\ 
\begin{aligned}
= &\sum\limits_{k=1}^K\sum\limits_{\substack{I\subseteq\{1,\ldots, K\}\\|I|=k}} \mathsf P_{y_1,\ldots, y_K}\big[\widehat A_K(W^{(1)},\ldots, W^{(K)}),T^{(i)}_{1+\varepsilon}<T^{(i)}_3\text{ iff }i\in I\big]\\
= &\sum\limits_{k=1}^K\sum\limits_{\substack{I\subseteq\{1,\ldots, K\}\\|I|=k}} \mathsf P_{y_1,\ldots, y_K}\big[T^{(i)}_{1+\varepsilon}<T^{(i)}_3\text{ iff }i\in I\big]\\
&\qquad\qquad\qquad \mathsf P_{y_1,\ldots, y_K}\big[\widehat A_K(W^{(1)},\ldots, W^{(K)})\,|\,T^{(i)}_{1+\varepsilon}<T^{(i)}_3\text{ iff }i\in I\big].
\end{aligned}
\end{multline*}
Let $I=\{i_1,\ldots, i_k\}$ and define $z_1=y_{i_1},\ldots, z_k=y_{i_k}$. By the locality of event $\widehat A$ and the strong Markov property at the first hitting time of $B(1+\varepsilon)$, 
\begin{multline*}
\mathsf P_{y_1,\ldots, y_K}\big[\widehat A_K(W^{(1)},\ldots, W^{(K)})\,|\,T^{(i)}_{1+\varepsilon}<T^{(i)}_3\text{ iff }i\in I\big]\\
\begin{aligned}
= 
&\mathsf P_{z_1,\ldots,z_k}\big[\widehat A_k(W^{(1)},\ldots, W^{(k)})\,|\,T^{(i)}_{1+\varepsilon}<T^{(i)}_3\text{ for all }1\leq i\leq k\big]\\
= &\int\limits_{(\partial B(1+\varepsilon))^k}\bigotimes\limits_{i=1}^k Q_i(dx_i)\,\mathsf P_{x_1,\ldots, x_k}\big[\widehat A_k(W^{(1)},\ldots, W^{(k)})\big],
\end{aligned}
\end{multline*}
where 
\[
Q_i[\cdot] = \mathsf P_{z_i}\big[W_{T_{1+\varepsilon}}\in \cdot\,|\,T_{1+\varepsilon}<T_3\big].
\]

\smallskip

If $k\geq d+1$, then (as in the proof of Theorem~\ref{thm:uniqueness:oneball}) there exists $C=C(d)$ such that 
\[
\mathsf P_{x_1,\ldots, x_k}\big[\widehat A_k(W^{(1)},\ldots, W^{(k)})\big]
\leq \mathsf P_{x_1,\ldots, x_k}\big[T^{(i)}_3<T^{(i)}_{1-\varepsilon}\text{ for }1\leq i\leq k\big]
\leq C\varepsilon^{d+1}.
\]
Thus, for all $k\geq d+1$, 
\[
\mathsf P_{y_1,\ldots, y_K}\big[\widehat A_K(W^{(1)},\ldots, W^{(K)})\,|\,T^{(i)}_{1+\varepsilon}<T^{(i)}_3\text{ iff }i\in I\big]\leq C\varepsilon^{d+1}.
\]

Now, assume that $k\leq d$. 
Let $\overline Q$ be the uniform distribution on $\partial B(1+\varepsilon)$. By the Poisson formula (Lemma~\ref{l:poisson-formula}) and Lemma~\ref{l:BM-hitting}, there exists $C=C(d)$ such that for any $B\in\mathscr B(\partial B(1+\varepsilon))$, 
\[
Q_i[B] \leq \frac{\mathsf P_{z_i}\big[W_{T_{1+\varepsilon}}\in B \big]}{\mathsf P_{z_i}\big[T_{1+\varepsilon}<T_3\big]}
\leq C\frac{|B|}{|\partial B(1+\varepsilon)|} = C\,\overline Q(B).
\]
Furthermore, let 
\[
\mathcal V_k(W^{(1)},\ldots, W^{(k)}) = \R^d\setminus\Big(\bigcup\limits_{i=1}^k\bigcup\limits_{t_i=0}^\infty B\big(W^{(i)}_{t_i},1\big)\Big)
\]
and $A_k(W^{(1)},\ldots, W^{(k)})$ the event that set $\mathcal V_k(W^{(1)},\ldots, W^{(k)})\cap B(\varepsilon)$ consists of at least $2$ connected components. If $\widetilde T^{(i)}_{1+\varepsilon} = \inf\{t>T^{(i)}_3\,:\,W^{(i)}_t\in B(1+\varepsilon)\}$, then for some $C=C(d)$, 
\begin{multline*}
\mathsf P_{x_1,\ldots, x_k}\big[\widehat A_k(W^{(1)},\ldots, W^{(k)})\big]\\
\begin{aligned}
= 
&\quad\frac{\mathsf P_{x_1,\ldots, x_k}\big[\widehat A_k(W^{(1)},\ldots, W^{(k)}), \widetilde T^{(i)}_{1+\varepsilon}=+\infty\text{ for all }1\leq i\leq k\big]}{\mathsf P_{x_1,\ldots, x_k}\big[\widetilde T^{(i)}_{1+\varepsilon}=+\infty\text{ for all }1\leq i\leq k\big]}\\
= &\quad\frac{\mathsf P_{x_1,\ldots, x_k}\big[A_k(W^{(1)},\ldots, W^{(k)}), \widetilde T^{(i)}_{1+\varepsilon}=+\infty\text{ for all }1\leq i\leq k\big]}{\mathsf P_{x_1,\ldots, x_k}\big[\widetilde T^{(i)}_{1+\varepsilon}=+\infty\text{ for all }1\leq i\leq k\big]}\\
\leq &\quad C^k\,\mathsf P_{x_1,\ldots, x_k}\big[A_k(W^{(1)},\ldots, W^{(k)})\big],
\end{aligned}
\end{multline*}
where the last inequality follows from Lemma~\ref{l:BM-hitting}.

Hence, for some $C=C(d)$,
\begin{multline*}
\mathsf P_{y_1,\ldots, y_K}\big[\widehat A_K(W^{(1)},\ldots, W^{(K)})\,|\,T^{(i)}_{1+\varepsilon}<T^{(i)}_3\text{ iff }i\in I\big]\\
\leq C^{2k}\int\limits_{(\partial B(1+\varepsilon))^k}\bigotimes\limits_{i=1}^k \overline Q(dx_i)
\,\mathsf P_{x_1,\ldots, x_k}\big[A_k(W^{(1)},\ldots, W^{(k)})\big].
\end{multline*}
By Theorem~\ref{thm:uniqueness:oneball}, the last integral is bounded from above by $C\log^m\big(\tfrac1\varepsilon\big)\varepsilon^{d+1}$. Thus, for $k\leq d$, we obtain for some $C=C(d)$ and $m=m(d)$ that 
\[
\mathsf P_{y_1,\ldots, y_K}\big[\widehat A_K(W^{(1)},\ldots, W^{(K)})\,|\,T^{(i)}_{1+\varepsilon}<T^{(i)}_3\text{ iff }i\in I\big]\leq C\log^m\big(\tfrac1\varepsilon\big)\varepsilon^{d+1}.
\]

All in all, we obtain that 
\begin{multline*}
\mathsf P_{y_1,\ldots, y_K}\big[\widehat A_K(W^{(1)},\ldots, W^{(K)})\big]\\
\begin{aligned}
\leq &\Big( C\log^m\big(\tfrac1\varepsilon\big)\varepsilon^{d+1}\Big)\,
\sum\limits_{k=1}^K\sum\limits_{\substack{I\subseteq\{1,\ldots, K\}\\|I|=k}} \mathsf P_{y_1,\ldots, y_K}\big[T^{(i)}_{1+\varepsilon}<T^{(i)}_3\text{ iff }i\in I\big]\\
= 
&\Big( C\log^m\big(\tfrac1\varepsilon\big)\varepsilon^{d+1}\Big)\,\mathsf P_{y_1,\ldots,y_K}[N\geq 1]
\leq C\log^m\big(\tfrac1\varepsilon\big)\varepsilon^{d+1}.
\end{aligned}
\end{multline*}
The proof of Theorem~\ref{thm:uniqueness:excursions} is completed. 
\end{proof}

\subsection{Proof of Theorem~\ref{thm:uniqueness-severalballs}}\label{sec:uniqueness-severalballs-proof}

Fix $J\geq 1$ and $x_1,\ldots, x_J\in\R^d$ such that $\|x_j - x_{j'}\|>6$ for all $j\neq j'$, and
define 
\[
B = \bigcup\limits_{j=1}^J B(x_j,2)\quad\text{and}\quad
B' = \bigcup\limits_{j=1}^J B(x_j,3).
\]
By assumption, each Brownian motions starts on $\partial B$. 
We decompose the path of each Brownian motion into excursions from $\partial B$ to $\partial B'$. 
The event $A_{K,j}(W^{(1)},\ldots, W^{(K)})$ only depends on the excursions from $\partial B(x_j,2)$ to $\partial B(x_j,3)$. So, we can apply Theorem~\ref{thm:uniqueness:excursions} to the set of excursions for each $j$.

For $1\leq k\leq K$, consider the stopping times $\tau^{(k)}_1 = 0$, $\eta^{(k)}_1 = \inf\{t>\tau^{(k)}_1\,:\,W^{(k)}_t\notin B'\}$, and for $i\geq 2$, 
\[
\tau^{(k)}_i = \inf\{t>\eta^{(k)}_{i-1}\,:\,W^{(k)}_t\in B\},\quad 
\eta^{(k)}_i = \inf\{t>\tau^{(k)}_i\,:\,W^{(k)}_t\notin B'\}
\]
and define $N^{(k)}_j = \big|\{i\,:\,\tau^{(k)}_i<\infty\text{ and }W^{(k)}_{\tau^{(k)}_i}\in B_j\}\big|$.
Then
\begin{multline*}
\mathsf P_{y_1,\ldots, y_K}\Big[\bigcap\limits_{j=1}^J A_{K,j}(W^{(1)},\ldots, W^{(K)})\Big]\\
= \sum\limits_{n_1,\ldots, n_J\geq 1}
\mathsf P_{y_1,\ldots, y_K}\Big[\bigcap\limits_{j=1}^J A_{K,j}(W^{(1)},\ldots, W^{(K)}),\,\sum\limits_{k=1}^KN^{(k)}_j = n_j\text{ for }1\leq j\leq J\Big]\\
\begin{aligned}
\leq 
&\sum\limits_{n_1,\ldots, n_J\geq 1}
\mathsf P_{y_1,\ldots, y_K}\Big[\sum\limits_{k=1}^KN^{(k)}_j = n_j\text{ for }1\leq j\leq J\Big]\\
&\mathsf P_{y_1,\ldots, y_K}
\Big[\bigcap\limits_{j=1}^J A_{K,j}(W^{(1)},\ldots, W^{(K)})\,\Big|\,\sum\limits_{k=1}^KN^{(k)}_j = n_j\text{ for }1\leq j\leq J\Big].
\end{aligned}
\end{multline*}
Let $\widehat A_{k,j}(W^{(1)},\ldots, W^{(k)})$ be the analogue of event $\widehat A_k(W^{(1)},\ldots, W^{(k)})$ from Theorem~\ref{thm:uniqueness:excursions} for the balls centered in $x_j$. By the strong Markov property, we obtain that 
\begin{multline*}
\mathsf P_{y_1,\ldots, y_K}
\Big[\bigcap\limits_{j=1}^J A_{K,j}(W^{(1)},\ldots, W^{(K)})\,\Big|\,\sum\limits_{k=1}^KN^{(k)}_j = n_j\text{ for }1\leq j\leq J\Big]\\
\leq
\prod\limits_{j=1}^J \sup\limits_{\substack{x_{j,1},\ldots, x_{j,n_j}\\\in\partial B(x_j,2)}} \mathsf P_{x_{j,1},\ldots, x_{j,n_j}}\big[\widehat A_{n_j,j}(W^{(1)},\ldots, W^{(n_j)})\big].
\end{multline*}
By Theorem~\ref{thm:uniqueness:excursions}, each probability on the right hand side is bounded from above by $C\log^m\big(\tfrac1\varepsilon\big)\varepsilon^{d+1}$ for some $C=C(d)$ and $m=m(d)$. The result follows. \qed

\section{Brownian interlacements}\label{sec:BI}

In this section we define the model of Brownian interlacements on $\R^d$, $d\geq 3$, precisely. 
For further details and proofs we refer the reader to \cite[Section~2]{Sznitman-BI}.

\subsection{Basics of Brownian motion and potential theory}

Let $\mathsf W_+$ be the space of continuous $\R^d$-valued paths tending to infinity at infinite times. We denote by $X_t$, $t\geq 0$, the canonical process on $\mathsf W_+$  (i.e.\ $X_t(w) = w(t)$) and by $\mathcal W_+$ the sigma-algebra on $\mathsf W_+$ generated by the canonical process. 
Since the Brownian motion is transient in dimension $d\geq 3$, the law of the Brownian motion starting from $x\in\R^d$, denoted by $\mathsf P_x$, is a probability measure on $(\mathsf W_+,\mathcal W_+)$.

\smallskip

We write 
\[
p_t(x,x') = \tfrac{1}{(2\pi t)^{\frac d2}}\,\exp\big(-\tfrac{\|x-x'\|^2}{2t}\big),\quad x,x'\in\R^d,\,t>0,
\]
for the Brownian transition density and 
\[
G(x,x') = \int\limits_0^\infty p_t(x,x')\,dt,\quad x,x'\in\R^d,
\]
for the respective Green function. 

\smallskip

Given a compact set $K$ in $\R^d$, we denote by $e_K$ the equilibrium measure of $K$, which is a finite measure uniquely determined by the last visit formula (see \cite[Theorem 3.4]{Sznitman-Book}):
\begin{equation}\label{eq:last-visit-formula}
\mathsf P_x\big[L_K>0,\,L_K\in dt,\,X_{L_K}\in dy\big] = p_t(x,y)\,e_K(dy)\,dt,
\end{equation}
where 
\[
L_K(w) = \sup\{t>0\,:\,X_t(w)\in K\},\quad w\in \mathsf W_+,
\]
is the time of the last visit of $w$ to $K$. 
The total mass of $e_K$ is called the capacity of $K$ and is denoted by $\mathrm{cap}(K)$. 

\smallskip

For a closed ball $B$ and $x\notin \mathrm{int}(B)$, we denote by $\mathsf P^B_x$ the law of the Brownian motion starting from $x$ and conditioned on staying outside of $B$ for all positive times.
(For $x\in\partial B$, one can make sense of $\mathsf P^B_x$, for example, via a weak limit procedure; see \cite[Theorems 4.1 and 2.2.]{Burdzy-87}.)

\smallskip

We denote by $\mathsf P_{x,y}^t$ the Brownian bridge measure in time $t>0$ from $x$ to $y$, see \cite[p. 137--140]{Sznitman-Book}. 
By \cite[Theorem~2.12]{Getoor}, for any closed ball $B$ and $x\in B$, 
\begin{equation}\label{eq:conditional-independence-exit-time}
\begin{array}{c}
\text{under $\mathsf P_x$, conditionally on $L_B=t>0$ and $X_{L_B}=y\in \partial B$, the processes}\\ 
\text{$(X_s)_{0\leq s\leq t}$ and $(X_{L_B+s})_{s\geq 0}$ are independent and have laws $\mathsf P^t_{x,y}$ resp.\ $\mathsf P^B_y$.}
\end{array}
\end{equation}
(Note that the joint law of $L_B$ and $X_{L_B}$ is explicit and given by \eqref{eq:last-visit-formula}.)

\subsection{Compatible measures on doubly-infinite paths}
Let $\mathsf W$ be the space of continuous doubly-infinite $\R^d$-valued paths tending to infinity at positive and negative infinite times. We denote by $X_t$, $t\in\R$, the canonical process on $\mathsf W$  (i.e.\ $X_t(w) = w(t)$) and by $\mathcal W$ the sigma-algebra on $\mathsf W$ generated by the canonical process. We denote the canonical time shift on $\mathsf W$ by $\theta_t$, $t\in\R$. For a closed set $F\subseteq \R^d$, we define the first entrance time of $w\in\mathsf W$ in $F$ as 
$H_F(w)= \inf\{t\in\R\,:\,X_t(w)\in F\}$.

\smallskip

For a compact set $K$ in $\R^d$, we write 
\[
\mathsf W_K = \{w\in \mathsf W\,;\,H_K(w)<\infty\},\quad 
\mathsf W_K^0 = \{w\in \mathsf W\,:\, H_K(w) = 0\}
\]
for the sets of paths that ever visit $K$, resp., visit $K$ for the first time at time $0$. 

\smallskip

For a closed ball $B$ of positive radius, we define the following measure on $\mathsf W_B^0$:
\begin{equation}\label{def:Q_B}
Q_B\big[(X_{-t})_{t\geq 0}\in A,\,X_0\in dx,\,(X_t)_{t\geq 0}\in A'\big] = e_B(dx)\,\mathsf P^B_x[A]\,\mathsf P_x[A'],\quad A,A'\in \mathcal W_+.
\end{equation}
The measures $Q_B$ are compatible, in the sense that $Q_B = \theta_{H_B}\circ\big(\mathds{1}_{\mathsf W_B}\,Q_{B'}\big)$, for any closed balls $B$ and $B'$ with $B\subset\mathrm{int}(B')$, see \cite[Lemma 2.1]{Sznitman-BI}. In particular, for any compact set $K$ in $\R^d$, the measure 
\[
Q_K = \theta_{H_K}\circ\big(\mathds{1}_{\mathsf W_K}\,Q_B\big),
\]
on $\mathsf W_K^0$, where $B$ is any closed ball containing $K$, is well-defined. Note that $Q_K$ is a finite measure with $Q_K[\mathsf W_K^0] = \mathrm{cap}(K)$. 

\medskip

For a compact set $K$ in $\R^d$ and $w\in \mathsf W_K$, we define the last exit time of $w$ from $K$ by 
\[
L_K(w) = \sup\{t\in \R\,:\,X_t(w)\in K\}. 
\]
By \eqref{eq:last-visit-formula}, \eqref{eq:conditional-independence-exit-time} and \eqref{def:Q_B}, for any closed ball $B$ and $A,A'\in\mathcal W_+$, 
\begin{multline}\label{eq:BB-representation-Q_B}
Q_B\big[(X_{-s})_{s\geq 0}\in A,\, (X_s)_{0\leq s\leq L_B}\in\cdot,\, (X_{L_B+s})_{s\geq 0}\in A'\big]\\
= 
\iint\limits_{\partial B\times \partial B} e_B(dx)\,e_B(dx')\,G(x,x')\,
\mathsf P^B_x[A]\,\Big(\int\limits_0^\infty \frac{p_t(x,x')}{G(x,x')}\,\mathsf P^t_{x,x'}[\cdot]\,dt\Big)\,\mathsf P^B_{x'}[A'],
\end{multline}
where $(X_s)_{0\leq s\leq L_B}$ is viewed as a random element on the space $\mathsf W_{\text{fin}}$ of continuous $\R^d$-valued paths of positive finite duration, equipped with the sigma-algebra induced by the map $(w,t)\in C([0,1],\R^d)\times(0,\infty)\mapsto w(\frac{\cdot}{t})\in \mathsf W_{\text{fin}}$. The identity \eqref{eq:BB-representation-Q_B} states that under $Q_B$, the pieces of the random path before the first entrance time in $B$, after the last visit time in $B$, and between those times are conditionally independent.

\subsection{Brownian interlacement measure}
We now define a suitable sigma-finite measure on doubly-infinite paths, whose restriction to every $\mathsf W_K$ is $Q_K$. 

\smallskip

Two paths $w$ and $w'$ in $\mathsf W$ are called equivalent, if $w'=\theta_t(w)$ for some $t\in\R$. The quotient set of $\mathsf W$ modulo this equivalence relation is denoted by $\mathsf W^*$. The canonical projection $\pi^*:\mathsf W\to \mathsf W^*$ induces the sigma-algebra $\mathcal W^* = \{A\subseteq \mathsf W^*\,:\,(\pi^*)^{-1}(A)\in \mathcal W\}$ on $\mathsf W^*$. For a compact set $K$ in $\R^d$, we denote by $\mathsf W_K^*$ the image of $\mathsf W_K$ under $\pi^*$. 
Note that $\pi^*$ maps bijectively $\mathsf W_K^0$ onto $\mathsf W_K^*$.

\smallskip

By \cite[Theorem 2.2]{Sznitman-BI}, there exists a unique sigma-finite measure $\nu$ on $(\mathsf W^*,\mathcal W^*)$, whose restriction to any $\mathsf W_K^*$ coincides with $Q_K$, more precisely, 
\[
\mathds{1}_{\mathsf W_K^*}\,\nu = \pi^*\circ Q_K,\quad \text{for any compact $K$ in $\R^d$}.
\]
Note that $\nu[\mathsf W^*_K] = Q_K[\mathsf W_K] = \mathrm{cap}(K)$. 

\subsection{Brownian interlacement point process}\label{sec:BIPP}

Consider the space of point measures 
\[
\Omega = \Big\{\omega = \sum\limits_{i\geq 1} \delta_{(w_i^*,\alpha_i)}\,:\,\omega\big(\mathsf W_K^*\times[0,\alpha]\big)<\infty\text{ for all compact set $K\subseteq\R^d$ and $\alpha>0$}\Big\}
\]
on $\mathsf W^*\times\R_+$, endowed with the sigma-algebra $\mathcal A$ generated by the evaluation maps 
\[
\omega\mapsto\omega(E), \quad E\in\mathcal W^*\otimes\mathscr B(\R_+),
\]
and denote by $\P$ the Poisson point measure on $\mathsf W^*\times\R_+$ with intensity $\nu\otimes d\alpha$; the random point measure with law $\P$ is called the \emph{Brownian interlacement point process on $\R^d$}. 

The random variable 
\[
N_{K,\alpha} = N_{K,\alpha}(\omega) = \omega\big(\mathsf W_K^*\times[0,\alpha]\big),
\]
which counts the number of trajectories with labels $\leq \alpha$ (in $\omega$) that visit $K$, has Poisson distribution with parameter $\alpha\mathrm{cap}(K)$. 
For any closed ball $B$, given $N_{B,\alpha} = n$, the $n$ trajectories of the Brownian interlacement point process with labels $\leq \alpha$ that visit $B$ are independent random elements of $\mathsf W^*_B$ with the common distribution $\frac{1}{\mathrm{cap}(B)}\big(\pi^*\circ Q_B\big)$, whose labels are independent uniformly distributed on $[0,\alpha]$. By \eqref{eq:BB-representation-Q_B} each of them can be sampled (independenty) as follows: 
\begin{itemize}
\item
Sample the locations of the first entrance and last visit in $B$, $(X_i,X_i')$, from the distribution 
\begin{equation}\label{eq:sample-1}
\frac{1}{\mathrm{cap}(B)}\,G(x,x')\,e_B(dx)\,e_B(dx');
\end{equation}
\item
Given $X_i=x_i$ and $X_i'=x_i'$, sample independently random paths $\gamma_i$ and $\gamma_i'$ in $\mathsf W_+$ and $\widetilde \gamma_i$ in $W_{\mathrm{fin}}$ respectively from the distributions
\begin{equation}\label{eq:sample-2}
\mathsf P^B_{x_i},\quad \mathsf P^B_{x_i'},\quad \int\limits_0^\infty \frac{p_t(x,x')}{G(x,x')}\,\mathsf P^t_{x,x'}[\cdot]\,dt;
\end{equation}
\item
Let $w_i$ be the concatenation of the time reversal of $\gamma_i$, $\widetilde \gamma_i$ and $\gamma_i'$, so that $w_i(t) = \gamma_i(-t)$ for $t\leq 0$. (Note that $w_i$ is a random path in $\mathsf W_B^0$ with the law $\frac{1}{\mathrm{cap}(B)}Q_B$.)
\item
To get the desired random element of $\mathsf W^*_K\times[0,\alpha]$, we project $w_i$ onto $\mathsf W^*$ and assign it an independent label from the uniform distribution on $[0,\alpha]$. 
\end{itemize}

\smallskip

For any $\alpha>0$, we denote by $\P^\alpha$ the 
push forward measure of $\P$ by the map 
\[
\omega = \sum\limits_{i\geq 1}\delta_{(w_i^*,\alpha_i)}\,\,\mapsto\,\, \iota^\alpha(\omega) = \sum\limits_{i\geq 1:\,\alpha_i\leq \alpha}\delta_{w_i^*}. 
\]
$\P^\alpha$ is a Poisson point measure on $\mathsf W^*$ with intensity $\alpha\nu$. We call $\iota^\alpha$ the Brownian interlacement point process on $\R^d$ at level $\alpha$.

\subsection{Brownian interlacement}
For any $\alpha>0$ and $r\geq 0$, the \emph{Brownian interlacement at level $\alpha$ with radius $r$} is defined as
\begin{equation}\label{eq:BI-definition}
\mathcal I^\alpha_r(\omega) = \bigcup\limits_{i\geq 1:\,\alpha_i\leq \alpha}\bigcup\limits_{t\in\R} B\big(w_i(t),r\big),\quad \omega = \sum\limits_{i\geq 1}\delta_{(w_i^*,\alpha_i)}\in\Omega,
\end{equation}
where $w_i$ is any path in $\pi^{-1}(w_i^*)$. 
By the definition of $\Omega$, any $\mathcal I^\alpha_r$ is a closed set in $\R^d$. 
We write $\mathcal I^\alpha$ for $\mathcal I^\alpha_1$ and call it simply the Brownian interlacement at level $\alpha$. 
The complement of $\mathcal I^\alpha_r$ is called the \emph{vacant set (of Brownian interlacement) at level $\alpha$ and radius $r$} and is denoted by $\mathcal V^\alpha_r$. Again, when $r=1$, we just call it the vacant set at level $\alpha$ and omit $r$ from the notation. Any $\mathcal V^\alpha_r$ is an open set in $\R^d$. 

\smallskip

Any Brownian interlacement $\mathcal I^\alpha_r$ is a measurable map from $(\Omega,\mathcal A)$ to $(\Sigma, \mathcal F)$, where $\Sigma$ is the set of all closed subsets of $\R^d$ and $\mathcal F$ is the sigma-algebra on $\Sigma$ generated by the $\pi$-system $\big\{\{F\in\Sigma\,:\,F\cap K = \emptyset\},\,K\subset \R^d\text{ compact}\big\}$. The law of $\mathcal I^\alpha_r$ on $(\Omega,\mathcal A, \P)$ is the probability measure $Q^\alpha_r$ on $(\Sigma,\mathcal F)$ uniquely determined by the identities
\[
Q^\alpha_r\big[\{F\in\Sigma\,:\,F\cap K = \emptyset\}\big] = \P[\mathcal I^\alpha_r\cap K = \emptyset] = e^{-\alpha\,\mathrm{cap}(B(K,r))},\quad K\subset\R^d\text{ compact}.
\]

\smallskip

Let $t_x$ be the translation in $\R^d$ by $x$. By \cite[Proposition 1.5]{Li-BI}, for any $\alpha>0$ and $r\geq 0$, 
\begin{equation}\label{eq:BI-ergodicity}
\text{$(t_x)_{x\in\R^d}$ is a measure preserving ergodic flow on $(\Sigma,\mathcal F, Q^\alpha_r)$.}
\end{equation}

\smallskip

While $\mathcal I^\alpha_r$ is a $\P$-almost surely connected set for any given $\alpha$ and $r$, the vacant set $\mathcal V^\alpha_r$ undergoes a non-trivial percolation phase transition in $\alpha$ (from $\P$-a.s.\ containing an infinite connected component for $\alpha<\alpha_c$ to $\P$-a.s.\ no infinite component for $\alpha>\alpha_c$), see Corollary~0.2 and Theorem~0.3 in \cite{Li-BI}. Furthermore, by \eqref{eq:BI-ergodicity}, the number of infinite connected components in the vacant set $\mathcal V^\alpha_r$ is constant $\P$-almost surely.

\section{Connectivity of Brownian interlacements}\label{sec:connectivity-BI}

It is well known that the traces of two independent Brownian paths in $\R^d$ intersect almost surely if and only if $d\leq 3$; see e.g.\ \cite[Theorem~9.1]{MP-BM-book}. 
This implies that any pair of trajectories in the support of the Poisson point process $\iota^\alpha$ (i.e.\ interlacement trajectories with labels $\leq \alpha$) a.s.\ intersect when $d= 3$ and a.s.\ do not intersect when $d\geq 4$. In particular, $\mathcal I^\alpha_0$ is a.s.\ connected if and only if $d= 3$. At the same time, for any radius $r>0$, $\mathcal I^\alpha_r$ is a.s.\ connected in any dimension, see \cite[Corollary~1.2]{Li-BI}. In fact, \cite[Theorem~1.1]{Li-BI} proves that a.s.\ any pair of $r$-sausages $B(w_i^*,r)$ and $B(w_j^*,r)$ around interlacement trajectories $w_i^*$ and $w_j^*$ from $\iota^\alpha$ are connected with each other via a chain of at most $\lceil \frac{d-4}2\rceil$ other $r$-sausages around some interlacement trajectories from $\iota^\alpha$. 

\smallskip

In this section, we refine the connectivity property of $\mathcal I^\alpha_r$ by showing that 
any pair of interlacement trajectories from the support of $\iota^\alpha$ are a.s.\ connected inside the ``$r$-interior'' of $\mathcal I^\alpha_r$; in words, for any $x,y\in\mathcal I^\alpha_0$, the ball $B(x,r)$ can be continuously transported to the ball $B(y,r)$ while remaining inside the set $\mathcal I^\alpha_r$. We will use this property in the proof of the uniqueness of the infinite connected component in the vacant set $\mathcal V^\alpha$ to construct certain local modifications by rerouting interlacement trajectories to avoid a given region (see Proposition~\ref{prop:N=k} and Lemma~\ref{l:trifurcation}).

\smallskip

For $r>0$, we call two sets $A_1,A_2\subseteq \R^d$ \emph{$r$-connected} if there exists a continuous path $\gamma:[0,1]\to\R^d$, such that (a) $\gamma(0)\in A_1$, (b) $\gamma(1)\in A_2$ and (c) $B(\gamma(t),r)\subseteq B(A_1\cup A_2,r)$ for all $t$;\footnote{$r$-connectedness of sets is weaker than intersection of sets, but stronger than intersection of $r$-neighborhoods of the sets.}
we write $A_1\stackrel{r}\longleftrightarrow A_2$ if $A_1$ and $A_2$ are $r$-connected. 
In what follows we are interested in $r$-connectedness of trajectories from the support of the interlacement point process $\iota^\alpha$. In Proposition~\ref{prop:BI-connectivity-int}, we abuse notation and denote the range of a path (resp.\ equivalence class of paths) $\gamma$ in $\R^d$ also by $\gamma$. 

\begin{proposition}\label{prop:BI-connectivity-int}
Let $d\geq 3$, $\alpha,r>0$ and $x,y\in\R^d$. Let $\iota^\alpha$ be the Brownian interlacement point process at level $\alpha$. Let $W_x$ and $W_y$ be Brownian motions in $\R^d$ from $x$ resp.\ $y$. We assume that $W_x$, $W_y$ and $\iota^\alpha$ are independent. 
\begin{enumerate}
\item
When $d\in\{3,4\}$, then almost surely $W_x\stackrel{r}\longleftrightarrow W_y$. 
\item
When $d\geq 5$, then almost surely 
there exist $1\leq k\leq \lceil\frac{d-4}2\rceil$ trajectories $w^*_1,\ldots, w^*_k$ in the support of $\iota^\alpha$, such that 
\[
W_x\stackrel{r}\longleftrightarrow w^*_1,\,\,
w^*_1\stackrel{r}\longleftrightarrow w^*_2,\,\,\ldots,\,\,
w^*_{k-1}\stackrel{r}\longleftrightarrow w^*_k,\,\,
w^*_k\stackrel{r}\longleftrightarrow W_y.
\]
\end{enumerate}
\end{proposition}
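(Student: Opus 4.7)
The plan is to treat the two cases separately, with case (2) reduced to case (1) via Li's theorem \cite[Theorem~1.1]{Li-BI}.

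For case (1) with $d=3$, I would use the classical fact that two independent Brownian motions in $\R^3$ intersect almost surely: picking any $z\in W_x\cap W_y$, the constant path $\gamma\equiv z$ trivially satisfies conditions (a)--(c), since $z\in W_x$ gives $B(z,r)\subseteq B(W_x,r)\subseteq B(W_x\cup W_y,r)$.

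For case (1) with $d=4$, the traces $W_x$ and $W_y$ are disjoint almost surely, but $W_y$ visits $B(W_x,r)$ almost surely because $B(W_x,r)$ has positive capacity. My plan is a geometric reduction: it suffices to produce random $z\in \R^4$ and $R>r$ such that $B(z,R)\subseteq B(W_x,r)\cup B(W_y,r)$ and both $W_x\cap B(z,R-r)$ and $W_y\cap B(z,R-r)$ are nonempty. Given such $(z,R)$, pick $p\in W_x\cap B(z,R-r)$ and $q\in W_y\cap B(z,R-r)$ and let $\gamma$ be the straight segment from $p$ to $q$; by convexity $\gamma\subseteq B(z,R-r)$, so for each $t$ one has $B(\gamma(t),r)\subseteq B(z,R)\subseteq B(W_x\cup W_y,r)$, giving the desired $r$-connection. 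To establish the almost sure existence of $(z,R)$, I would iterate the strong Markov property along successive close approaches of $W_y$ to $W_x$: at each approach time, the remaining future of both Brownian motions has uniformly positive conditional probability to create the required thick cover, and independent retries ensure the event eventually occurs.

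For case (2) with $d\geq 5$, I would augment $\iota^\alpha$ by the added trajectories $W_x$ and $W_y$ (viewed as two additional doubly-infinite paths) and apply a straightforward extension of Li's theorem to obtain almost surely a chain $W_x=w_0^*,w_1^*,\ldots,w_k^*,w_{k+1}^*=W_y$ with $k\leq \lceil(d-4)/2\rceil$ and consecutive Wiener sausages intersecting. Each sausage intersection in the chain is then upgraded to a full $r$-connection by the same geometric reduction used in case (1) for $d=4$, applied locally near each intersection point: there, two Brownian paths come $r$-close and the thick-cover argument produces the required $(z,R)$. The main obstacle is precisely this probabilistic step in $d=4$: because $B(W_x,r)$ is the $r$-thickening of a $2$-dimensional trace, it does not by itself contain any ball of radius strictly greater than $r$, so the construction genuinely exploits the joint geometry of $W_x$ and $W_y$ near a close approach, relying on Brownian scaling and tight excursions at all scales to furnish the missing thickness.
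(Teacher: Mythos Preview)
Your $d=3$ case matches the paper exactly. Your $d=4$ argument is also essentially correct but more convoluted than necessary: the paper observes that it suffices for the sausage of $W_y$ \emph{alone} to cover $B(W_y(T),2r)$ in the unit time after the first hitting time $T$ of $B(W_x,r)$. Since the point of $W_x$ within distance $r$ of $W_y(T)$ automatically lies in $B(W_y(T),r)$, this already gives the $r$-connection; and because the covering event depends only on $(W_y(T+s))_{0\leq s\leq 1}$, the strong Markov property for $W_y$ (conditionally on $W_x$) makes the successive retries genuinely independent without any appeal to the future of $W_x$.

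Your approach to $d\geq 5$ has a real gap. After fixing a chain $w_0^*,\ldots,w_{k+1}^*$ from Li's theorem, the link condition $B(w_i^*,r)\cap B(w_{i+1}^*,r)\neq\emptyset$ only says the two trajectories come within distance $2r$ somewhere; in dimensions $d\geq 5$ a Brownian path hits a fixed independent Wiener sausage only \emph{finitely} many times almost surely (visibility of a single sausage fails precisely for $d\geq 5$, which is why a chain is needed in the first place). With only finitely many close approaches available for a given pair $(w_i^*,w_{i+1}^*)$, your ``independent retries'' mechanism from the $d=4$ case yields merely positive probability of upgrading that link to an $r$-connection, not probability one, and there is no evident independence across alternative chains that would let you iterate to an almost-sure statement.

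The paper's fix is not to post-process Li's output but to modify Li's inductive construction. At each step, instead of keeping all interlacement trajectories that hit the current set $A_k$, one keeps only those whose $r$-sausage, in the unit time after first hitting $A_k$, covers the ball of radius $2r$ about the entry point. This is a Poisson thinning by an event of uniformly positive probability, so Li's capacity estimates (in particular the analogue of \cite[Lemma~2.10]{Li-BI}) survive with comparable constants, and every link in the resulting chain is $r$-connected by construction.
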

The proof of Proposition~\ref{prop:BI-connectivity-int} is essentially the same as the proof of (the upper bound of) \cite[Theorem~1.1]{Li-BI} and we only indicate the main (minor) needed modifications. 

\begin{proof}
We first exclude the trivial case of $d\in\{3,4\}$. When $d=3$, then the ranges $W_x$ and $W_y$ intersect almost surely, so the result trivially holds. When $d=4$, then (see e.g.\ \cite[Theorem~6.2]{AZ-1996}) the Wiener sausage $B(W_x,r) = \bigcup\limits_{t=0}^\infty B(W_x(t),r)$ is a.s.\ visible for the independent Brownian motion $W_y$, that is 
$W_y$ intersects $B(W_x,r)$ almost surely (and infinitely often). Conditioned on the sausage $B(W_x,r)$, let $T$ be the first hitting time of it by $W_y$. With universal positive probability, during the time interval $[T,T+1]$ the $r$-sausage of $W_y$ covers the ball of radius $2r$ centered at the first hitting point, that is 
\[
B\big(W_y(T),2r\big)\subseteq \bigcup\limits_{t=T}^{T+1}B\big(W_y(t),r\big).
\]
If this occurs, then the paths $W_x$ and $W_y$ are $r$-connected. 
Since $W_y$ hits the sausage $B(W_x,r)$ a.s.\ infinitely often, the paths $W_x$ and $W_y$ are $r$-connected almost surely. 

\smallskip

Let $d\geq 5$. 
We first briefly recall the strategy for the proof of \cite[Theorem~1.1]{Li-BI}, whose main step (\cite[Proposition~3.19]{Li-BI}) is to show that (in the notation of Proposition~\ref{prop:BI-connectivity-int}) 
almost surely there exist $k\leq \lceil\frac{d-4}2\rceil$ trajectories $w^*_1,\ldots, w^*_k$ in the support of $\iota^\alpha$, such that 
\begin{equation}\label{eq:connectivity-sausages}
\begin{array}{l}
B(W_x,r)\cap B(w^*_1,r)\neq \emptyset,\quad 
B(W_y,r)\cap B(w^*_k,r)\neq \emptyset,\\[4pt]
B(w^*_i,r)\cap B(w^*_{i+1},r)\neq \emptyset\,\,\text{ for all }1\leq i\leq k-1.
\end{array}
\end{equation}
This is proven in essentally the same way as Lemma~4.11 of \cite{RS-12}, where the analogue of \cite[Theorem~1.1]{Li-BI} is proven for the random interlacements on $\Z^d$, and can be roughly summarized as follows: 
\begin{itemize}\itemsep4pt
\item
Let $A_0 = B(W_x,r)$. 
\item
We represent $\iota^\alpha$ as the sum of $s_d = \lceil\frac{d-4}2\rceil$ independent copies $\iota_1,\ldots, \iota_{s_d}$ of the Brownian interlacement point process at level $s_d^{-1}\alpha$. 
\item
Let $\iota_1'$ be the point process of trajectories from $\iota_1$ that visit $A_0$ and denote by $A_1$ the union of all the $r$-sausages around the trajectories from the support of $\iota_1'$; let $\iota_2'$ be the point process of trajectories from $\iota_2$ that visit $A_1$ and denote by $A_2$ the union of $A_1$ and all the $r$-sausages around the trajectories from the support of $\iota_2'$; and so on until the set $A = A_{s_d}$ is constructed. 
\item
Using capacity estimates and a variant of Wiener's test, one shows that the set $A$ is a.s.\ visible for the Brownian motion $W_y$. This implies the existence of $k\leq s_d$ trajectories $w^*_1,\ldots, w^*_k$, with $w^*_i$ in the support of $\iota_i$ (in fact, in the support of $\iota_i'$), such that \eqref{eq:connectivity-sausages} holds. 
\end{itemize}

Only minor adjustments are needed to adapt this procedure to a proof of Proposition~\ref{prop:BI-connectivity-int}:
\begin{itemize}
\item
Let $\widetilde \iota_1$ be the point process of those trajectories from $\iota_1$ that (a) visit $A_0$ and (b) in the unit time interval directly after the first visit to $A_0$\footnote{In the rigorous construction, one considers restrictions of sets $A_k$ to large balls, so that the time of their first visit is well defined.}, the $r$-sausage of the trajectory covers the ball of radius $2r$ around the first entrance point. We denote by $\widetilde A_1$ the union of all the $r$-sausages around the trajectories from the support of $\widetilde \iota_1$. Similarly, we define $\widetilde \iota_2,\ldots ,\widetilde \iota_{s_d}$ and the sets $\widetilde A_2,\ldots, \widetilde A_{s_d}$, where $\widetilde A_k$ is the union of $\widetilde A_{k-1}$ and all the $r$-sausages around the trajectories from the support of $\widetilde \iota_k$.
\item
Although the sets $\widetilde A_k$ are constructed from thinner clouds of trajectories than $A_k$, one can control their hittability by the same capacity estimates as in \cite{Li-BI}. The only (minor) difference is in the proof of the analogue of \cite[Lemma~2.10]{Li-BI}, which states that the expected capacity of the restriction to $B(R)$ of the union of the $r$-sausages around interlacement trajectories from $\iota_k$ that intersect a compact set $D (\subset B(\frac12R))$ is $\geq c\min\big(\mathrm{cap}(D)R^2,R^{d-2}\big)$. The same estimate also holds if we thin out the interlacement process $\iota_k$ by keeping only the trajectories, such that in the unit time interval directly after the first hitting time of $D$ the $r$-sausage of the trajectory covers the ball of radius $2r$ centered at the first hitting point, since the number of such trajectories remains to be a Poisson random variable with comparable parameter $c'\mathrm{cap}(D)$.
\item
Having shown that the set $\widetilde A_{s_d} \big(=\bigcup\limits_{i=1}^{s_d}\bigcup\limits_{w^*_i\in\mathrm{supp}(\widetilde\iota_i)}B(w^*_i,r)\big)$ is a.s.\ visible for $W_y$ (cf.\ \cite[Propositions~2.18 and 2.19]{Li-BI}), one concludes similarly as in $d=4$ that 
$W_y$ is almost surely $r$-connected to the union of all the trajectories from the support of $\sum\limits_{i=1}^{s_d}\widetilde\iota_i$. By the construction of $\widetilde\iota_i$'s, this implies the existence of $1\leq k\leq s_d$ trajectories $w^*_1,\ldots, w^*_k$, with $w^*_i$ in the support of $\widetilde \iota_i$, such that $W_x\stackrel{r}\longleftrightarrow w^*_1$, $w^*_1\stackrel{r}\longleftrightarrow w^*_2$, $\ldots$, 
$w^*_{k-1}\stackrel{r}\longleftrightarrow w^*_k$, $w^*_k\stackrel{r}\longleftrightarrow W_y$. 
Thus, statement (2) of the proposition holds. 
\end{itemize}
\end{proof}

Since the Brownian motion is transient in dimensions $d\geq 3$, 
we obtain from Proposition~\ref{prop:BI-connectivity-int} its following analogue for Brownian motions conditioned to avoid a closed ball. 

\begin{corollary}\label{cor:condBI-connectivity-int}
Let $d\geq 3$, $\alpha,r,K>0$ and $x,y\in\partial B(K)$. Let $\iota^\alpha$ be the Brownian interlacement point process at level $\alpha$. Let $W_x^K$ and $W_y^K$ be Brownian motions in $\R^d$ from $x$ resp.\ $y$ conditioned on never revisiting $B(K)$ at positive times. We assume that $W_x^K$, $W_y^K$ and $\iota^\alpha$ are independent. 
\begin{enumerate}
\item
When $d\in\{3,4\}$, then almost surely $W_x^K\stackrel{r}\longleftrightarrow W_y^K$. 
\item
When $d\geq 5$, then almost surely 
there exist $1\leq k\leq \lceil\frac{d-4}2\rceil$ trajectories $w^*_1,\ldots, w^*_k$ in the support of $\iota^\alpha$, such that 
\[
W_x^K\stackrel{r}\longleftrightarrow w^*_1,\,\,
w^*_1\stackrel{r}\longleftrightarrow w^*_2,\,\,\ldots,\,\,
w^*_{k-1}\stackrel{r}\longleftrightarrow w^*_k,\,\,
w^*_k\stackrel{r}\longleftrightarrow W_y^K.
\]
\end{enumerate}
\end{corollary}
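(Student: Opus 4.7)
The plan is to reduce Corollary~\ref{cor:condBI-connectivity-int} to Proposition~\ref{prop:BI-connectivity-int} by combining the strong Markov property for the conditioned process with an absolute continuity argument between conditioned and unconditioned laws on paths that avoid $B(K)$.

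First I would fix an arbitrary $R>K$ and introduce the stopping times $\tau^x_R=\inf\{t\geq 0\,:\,W^K_x(t)\in\partial B(R)\}$ and $\tau^y_R$ (defined analogously for $W^K_y$). Since the conditioned motions are transient and tend to infinity, $\tau^x_R,\tau^y_R<+\infty$ almost surely. The law $\mathsf P^{B(K)}_\cdot$ is a Doob $h$-transform of the unconditioned law (with $h(z)=1-(K/\|z\|)^{d-2}$) for starting points with $\|z\|>K$, and is defined as a weak limit on the boundary of $B(K)$; in either case one obtains the strong Markov property at $\tau^x_R$. Hence, conditionally on the random exit positions $\xi_x:=W^K_x(\tau^x_R)\in\partial B(R)$ and $\xi_y:=W^K_y(\tau^y_R)\in\partial B(R)$, the future trajectories $\widetilde W_x:=(W^K_x(\tau^x_R+t))_{t\geq 0}$ and $\widetilde W_y:=(W^K_y(\tau^y_R+t))_{t\geq 0}$ are independent of each other and of $\iota^\alpha$, with laws $\mathsf P^{B(K)}_{\xi_x}$ and $\mathsf P^{B(K)}_{\xi_y}$.

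The key observation is that for any $z\in\partial B(R)$ the conditioned law $\mathsf P^{B(K)}_z$ is the unconditioned law $\mathsf P_z$ restricted to the positive-probability event $\{T_K=+\infty\}$ and renormalized by $(1-(K/R)^{d-2})^{-1}$; in particular, any event of full $\mathsf P_z$-probability also has full $\mathsf P^{B(K)}_z$-probability. I would then apply Proposition~\ref{prop:BI-connectivity-int} to independent unconditioned Brownian motions started at $\xi_x$ and $\xi_y$, which are independent of $\iota^\alpha$: this yields a full-probability event on which the required chain of $r$-connections exists (directly when $d\in\{3,4\}$, through at most $\lceil(d-4)/2\rceil$ trajectories from $\mathrm{supp}(\iota^\alpha)$ when $d\geq 5$). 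The absolute continuity above transfers this conclusion to $(\widetilde W_x,\widetilde W_y,\iota^\alpha)$ almost surely, and since $\widetilde W_x\subseteq W^K_x$ and $\widetilde W_y\subseteq W^K_y$ as subsets of $\R^d$, integrating over the joint law of $(\xi_x,\xi_y)$ yields the corollary.

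I do not expect a genuine obstacle in this proof; the only point that requires minor care is that $h(x)=0$ for $x\in\partial B(K)$, so the $h$-transform interpretation is degenerate at the prescribed starting points, and one must either work with the weak-limit definition of $\mathsf P^{B(K)}_x$ of \cite{Burdzy-87} or, as above, first wait until the motion reaches $\partial B(R)$ with $R>K$ to bypass this degeneracy. The reduction via the strong Markov property is precisely designed so that the absolute continuity step is applied only from genuinely interior starting points.
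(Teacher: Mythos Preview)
Your proposal is correct and is precisely the argument the paper has in mind: the paper's proof is the single sentence ``Since the Brownian motion is transient in dimensions $d\geq 3$, we obtain from Proposition~\ref{prop:BI-connectivity-int} its following analogue,'' and your reduction via the strong Markov property at $\partial B(R)$ together with absolute continuity of $\mathsf P^{B(K)}_z$ with respect to $\mathsf P_z$ (for $\|z\|>K$) is exactly the content behind that sentence. Your remark about the degeneracy of the $h$-transform at $\partial B(K)$ and the need to first reach $\partial B(R)$ is the only subtlety, and you handle it correctly; the monotonicity of $r$-connectedness under enlarging the sets (so that $\widetilde W_x\stackrel{r}\longleftrightarrow w^*_1$ implies $W^K_x\stackrel{r}\longleftrightarrow w^*_1$) is immediate from the definition.
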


\section{Uniqueness of the infinite connected component for the vacant set of Brownian interlacements}\label{sec:uniqueness-proof}

In this section we prove the uniqueness result for the number of infinite connected components in the vacant set of Brownian interlacements. Recall the definition of the vacant set $\mathcal V^\alpha_r$ from below \eqref{eq:BI-definition}.

\begin{theorem}\label{thm:uniqueness-BI}
For any $\alpha>0$ and $r>0$, the number of infinite connected components in $\mathcal V^\alpha_r$ is either a.s.\ equal to $0$ or a.s.\ equal to $1$.
\end{theorem}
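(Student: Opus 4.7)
\medskip\noindent\textbf{Proof proposal.} By the ergodicity in \eqref{eq:BI-ergodicity}, the number $N$ of infinite connected components of $\mathcal V^\alpha_r$ is $\mathsf P$-almost surely a constant in $\{0,1,2,\ldots,\infty\}$, so the plan is to rule out both $2\leq N<\infty$ and $N=\infty$. The two cases are handled by local modifications in the spirit of the classical van den Berg--Keane and Burton--Keane arguments, with two continuum-specific twists: finite-energy modifications are replaced by a rerouting surgery enabled by Corollary~\ref{cor:condBI-connectivity-int}, and the continuum pathology of unbounded components without a ray to infinity is excluded via Theorem~\ref{thm:expected-number-components}.

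\smallskip\noindent\emph{Step 1: no finite $k\geq 2$.} Suppose for contradiction that $\mathsf P[N=k]=1$ for some $k\geq 2$. First I would fix $R$ large enough that, with positive probability, all $k$ infinite components intersect $B(R)$, and pick an auxiliary $R'>R+r$ giving enough room for rerouting in $B(R')\setminus B(R)$. Conditional on the (Poisson-finite) collection of interlacement trajectories entering $B(R')$, the surgery will, for each such trajectory, cut out the piece between its first entrance into and last exit from a small neighborhood of $B(R)$, and splice in a continuous path inside $B(R')\setminus B(R)$ whose $r$-sausage is contained in $\mathcal I^\alpha_r\cap(B(R')\setminus B(R))$. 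The existence of such a splice with positive probability is exactly what Corollary~\ref{cor:condBI-connectivity-int} provides, since the two conditioned Brownian end-paths starting on $\partial B(R+r)$ and avoiding $B(R+r)$ can be $r$-connected inside the occupied set via at most $\lceil (d-4)/2\rceil$ auxiliary interlacement trajectories. After the surgery, $B(R)$ is entirely vacant, while no previously vacant point outside $B(R)$ has been newly covered; hence all $k$ infinite components merge through $B(R)$, yielding a modified configuration with at most one infinite component. Since the surgery is a measurable operation of positive conditional probability, this contradicts the $\mathsf P$-a.s.\ equality $N=k\geq 2$.

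\smallskip\noindent\emph{Step 2: not infinite, and main obstacle.} For the Burton--Keane part I would first invoke Theorem~\ref{thm:expected-number-components} together with the observation after \eqref{intro:expected-number-components}, which guarantees that every unbounded component of $\mathcal V^\alpha_r$ almost surely contains a continuous path to infinity; this is what allows us to talk about macroscopic ``branches'' in the continuum. Calling $x$ a trifurcation if a small vacant ball around $x$, when removed from $\mathcal V^\alpha_r$, splits one infinite component into at least three subcomponents each containing a ray to infinity, I would then use a rerouting surgery analogous to Step~1 (see Lemma~\ref{l:trifurcation}) to produce trifurcations with positive probability, and invoke stationarity/ergodicity to promote this to a positive spatial density. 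The classical Burton--Keane dimensional counting \cite{BK-Uniqueness}, with \eqref{intro:expected-number-components} handling possible ``star-like'' boundary artifacts, then yields the contradiction. I expect the main obstacle throughout to be Step~1: implementing the surgery as a measurable operation on $\iota^\alpha$ whose output law is mutually absolutely continuous with the original one on a positive-probability event. Corollary~\ref{cor:condBI-connectivity-int} is tailored precisely for this, replacing the missing finite-energy property by a quantitative connectivity statement inside the occupied set.
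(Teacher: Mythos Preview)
Your proposal is essentially correct and follows the paper's approach (Propositions~\ref{prop:N=k} and~\ref{prop:N=01}, with Proposition~\ref{prop:r-connectivity-IKn} driving the rerouting and Theorem~\ref{thm:expected-number-components} the Burton--Keane count). One point to sharpen in Step~1: Corollary~\ref{cor:condBI-connectivity-int} alone does not yield a splice whose $r$-sausage lies in $B(R')\setminus B(R)$, because the auxiliary trajectories $w_1^*,\ldots,w_k^*$ come from the full $\iota^\alpha$ (so they may enter $B(R)$) and the conditioned end-paths are not independent of $\iota^\alpha$; the paper fixes both issues by the $\varepsilon$-decomposition in the proof of Proposition~\ref{prop:r-connectivity-IKn}, which forces the auxiliary trajectories into an independent sub-process that with high probability avoids $B(K)$, giving a connection in $\mathrm{int}_1(\mathcal I_{K,n})\subset\R^d\setminus\mathrm{int}(B(K))$ and hence your ``$B(R)$ entirely vacant'' conclusion.
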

By \cite[(2.35)]{Sznitman-BI}, for any positive $\alpha, r,\lambda$,
\[
\lambda \mathcal V^\alpha_r = \mathcal V^{\lambda^{2-d}\alpha}_{\lambda r}.
\]
Thus, it suffices to prove Theorem~\ref{thm:uniqueness-BI} for $r=1$. Recall that we denote $\mathcal V^\alpha_1$ as $\mathcal V^\alpha$. 

\smallskip

We fix $\alpha>0$ and let $N$ be the number of infinite connected components in $\mathcal V^\alpha$. By \eqref{eq:BI-ergodicity}, $N$ is constant almost surely. 
Theorem~\ref{thm:uniqueness-BI} follows from Propositions~\ref{prop:N=k} and \ref{prop:N=01}, in which we show that $\P[N=k]=0$ for all $2\leq k<\infty$, respectively, $\P[N=\infty]=0$. In Section~\ref{sec:uniqueness-notation}, we collect some common notation that we use for local modification arguments (in the proofs of Lemma~\ref{l:N=k-complement}, Proposition~\ref{prop:N=k} and Lemma~\ref{l:trifurcation}); we also prove there in Proposition~\ref{prop:r-connectivity-IKn} a connectedness result for Brownian interlacements, which is crucially used in the local modification arguments of Proposition~\ref{prop:N=k} and Lemma~\ref{l:trifurcation}. Section~\ref{sec:expected-number} contains a result about expected number of large connected components of $\mathcal V^\alpha$ in a ball, which is needed to perform the Burton-Keane argument (see the proof of Proposition~\ref{prop:N=01}).

\subsection{Some notation}\label{sec:uniqueness-notation}

Let $\iota^\alpha$ be the Brownian interlacement point process at level $\alpha$. 

Let $K>0$ be fixed. We decompose $\iota^\alpha$ into the point process $\iota'$ of trajectories that visit the ball $B(K)$ and $\iota''$ of trajectories that do not visit $B(K)$. Note that $\iota'$ and $\iota''$ are independent.

Let $N_K$ be the number of trajectories in $\iota'$. On the event $\{N_K=n\}$, 
\begin{itemize}
\item
let $(X_i,X_i')$, $1\leq i\leq n$, be the locations of the first and last visits to $B(K)$ of the interlacement trajectories from $\iota'$; and
\item
let $(\gamma_i, \widetilde \gamma_i, \gamma_i')$, $1\leq i\leq n$, be the three fragments of the interlacement trajectories $\iota'$, respectively, the time reversal of the part before the first entrance in $B(K)$, the part between the first entrance and the last visit in $B(K)$, and the part after the last visit in $B(K)$. 
\end{itemize}
Recall from Section~\ref{sec:BIPP}, that given $N_K=n$, $(X_i,X_i')$, $1\leq i\leq n$, are i.i.d.\ with distribution \eqref{eq:sample-1}, and given their locations on $\partial B(K)$, $(\gamma_i, \widetilde \gamma_i, \gamma_i')$, $1\leq i\leq n$, are independent with law \eqref{eq:sample-2}. 
We define 
\[
\mathcal I_{K,n} = \bigcup\limits_{w^*\in \iota''}B(w^*,1)\cup\bigcup\limits_{i=1}^n B(\gamma_i,1)\cup\bigcup\limits_{i=1}^n B(\gamma_i',1)
\]
and $\mathcal V_{K,n} = \R^d\setminus \mathcal I_{K,n}$. 
(Note that the Brownian interlacement $\mathcal I^\alpha$ is obtained from $\mathcal I_{K,n}$ by adding to it the $1$-sausages $B(\widetilde\gamma_i,1)$ around the bridges $\widetilde \gamma_i$, i.e.\ $\mathcal I^\alpha = \mathcal I_{K,n}\cup\bigcup\limits_{i=1}^n B(\widetilde \gamma_i,1)$.)

\medskip

The following connectivity property of $\mathcal I_{K,n}$ is crucial for the ``local surgery'' argument in the proofs of Proposition~\ref{prop:N=k} and Lemma~\ref{l:trifurcation}. It follows directly from Corollary~\ref{cor:condBI-connectivity-int}.

For $A\subseteq \R^d$ and $r>0$, let 
\[
\mathrm{int}_r(A) = \{x\in A\,:\,B(x,r)\subseteq A\}
\]
be the $r$-interior of $A$. 
\begin{proposition}\label{prop:r-connectivity-IKn}
Given $N_K=n$, 
\begin{equation}\label{eq:r-connectivity-IKn}
\text{$X_i$ is connected to $X_i'$ in $\mathrm{int}_1(\mathcal I_{K,n})$ for all $1\leq i\leq n$ almost surely.}
\end{equation}
\end{proposition}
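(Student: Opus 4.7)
The plan is to reduce the statement to Corollary~\ref{cor:condBI-connectivity-int} by a conditioning argument. I would first condition on $N_K = n$, on the entrance/exit pairs $(X_j, X_j')_{j=1}^n$, on the bridges $\widetilde\gamma_j$ for $1\leq j\leq n$, on the outside pieces $(\gamma_j, \gamma_j')$ for $j\neq i$, and on the process $\iota''$. By the sampling recipe \eqref{eq:sample-1}--\eqref{eq:sample-2} together with the conditional independence \eqref{eq:conditional-independence-exit-time}, under this conditioning $\gamma_i$ and $\gamma_i'$ are independent Brownian motions from $X_i$ and $X_i'$ conditioned never to revisit $B(K)$; moreover, since $\iota'$ and $\iota''$ are independent, the pair $(\gamma_i, \gamma_i')$ is independent of $\iota''$. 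This is precisely the setting of Corollary~\ref{cor:condBI-connectivity-int}.

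Next I would carry out two easy reductions. First, because $\gamma_i$ starts at $X_i$ we have $B(X_i, 1) \subseteq B(\gamma_i, 1) \subseteq \mathcal I_{K,n}$, so $X_i \in \mathrm{int}_1(\mathcal I_{K,n})$; the same holds for $X_i'$. Second, every point on the range of $\gamma_i$ enjoys the same property (its unit ball lies inside $B(\gamma_i,1)$), hence the continuous path $\gamma_i$ connects $X_i$ within $\mathrm{int}_1(\mathcal I_{K,n})$ to every point of its range, and likewise for $\gamma_i'$. It therefore suffices to exhibit a continuous path inside $\mathrm{int}_1(\mathcal I_{K,n})$ joining some point of $\gamma_i$ to some point of $\gamma_i'$; equivalently, by the definition of $1$-connectedness, to show that $\gamma_i$ and $\gamma_i'$ are linked by a chain of paths whose $1$-sausages all lie in $\mathcal I_{K,n}$.

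In dimensions $d\in\{3,4\}$ the argument is immediate: Corollary~\ref{cor:condBI-connectivity-int}(1) yields $\gamma_i \stackrel{1}\longleftrightarrow \gamma_i'$ almost surely, and the resulting path lies in $B(\gamma_i,1)\cup B(\gamma_i',1)\subseteq\mathcal I_{K,n}$. For $d\geq 5$, I would apply Corollary~\ref{cor:condBI-connectivity-int}(2) with the auxiliary interlacement process taken to be $\iota''$. This produces an almost sure chain $w_1^*,\ldots,w_k^*$ with $k\leq\lceil(d-4)/2\rceil$ of trajectories belonging to $\iota''$, whose $1$-sausages automatically lie in $\mathcal I_{K,n}$ by the definition of that set; concatenating the consecutive $1$-connecting paths with $\gamma_i$ and $\gamma_i'$ produces the desired continuous path in $\mathrm{int}_1(\mathcal I_{K,n})$, and the proposition follows.

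The one point that requires care — and the main obstacle — is that Corollary~\ref{cor:condBI-connectivity-int} is stated for a \emph{full} Brownian interlacement $\iota^\alpha$, while $\iota''$ is only its restriction to trajectories not visiting $B(K)$. I expect this to be cosmetic: the proof of Corollary~\ref{cor:condBI-connectivity-int} rests on thinning $\iota^\alpha$ into $\lceil(d-4)/2\rceil$ independent copies and then running the cascade/capacity argument of \cite[Lemma~2.10]{Li-BI}. Replacing the thinnings of $\iota^\alpha$ by thinnings of $\iota''$ only changes the intensity of trajectories hitting each cascade set $D$ by a multiplicative constant, because a Brownian interlacement trajectory hitting $D$ avoids $B(K)$ with probability bounded below by a constant depending only on $d$ and the geometry of $D$ relative to $B(K)$; in the cascade, which is seeded by $B(\gamma_i,1)$ and grows out to infinity, the relevant sets $D$ can be taken in a region where this probability is uniformly bounded below, so the visibility argument of \cite{Li-BI} goes through and the claim is verified.
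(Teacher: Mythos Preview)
Your reduction to Corollary~\ref{cor:condBI-connectivity-int} is the right idea, and your observations about $X_i,X_i'\in\mathrm{int}_1(\mathcal I_{K,n})$ and about concatenating $1$-connecting paths through the intermediate trajectories are correct. The only real issue is the one you flag yourself: $\iota''$ is \emph{not} a Brownian interlacement point process, so Corollary~\ref{cor:condBI-connectivity-int} does not apply to it as stated. Your sketch of why the visibility/cascade argument of \cite{Li-BI} should survive the thinning to trajectories avoiding $B(K)$ is plausible, but it is a sketch, not a proof; in particular you would have to make precise in which region the cascade sets $D$ are taken and why the hitting intensity drops only by a constant factor uniformly over the iteration.

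The paper sidesteps this entirely with a cleaner decomposition. Rather than splitting $\iota^\alpha$ according to whether trajectories hit $B(K)$, it splits by \emph{label}: write $\iota^\alpha=\hat\iota+\check\iota$ with $\hat\iota=\iota^{\alpha-\varepsilon}$ and $\check\iota=\iota^{\alpha-\varepsilon,\alpha}$. Now $\check\iota$ is a genuine Brownian interlacement at level $\varepsilon$, independent of $\hat\iota$, so Corollary~\ref{cor:condBI-connectivity-int} applies verbatim to the conditional Brownian motions $\hat\gamma_i,\hat\gamma_i'$ (built from $\hat\iota$) and the process $\check\iota$. On the event $G$ that no trajectory of $\check\iota$ visits $B(K)$, the $\check\iota$-trajectories all belong to $\iota''$, their $1$-sausages lie in $\mathcal I_{K,n}$, and $(\hat\gamma_i,\hat\gamma_i')=(\gamma_i,\gamma_i')$; hence the $1$-connecting chain produced by the corollary lies in $\mathrm{int}_1(\mathcal I_{K,n})$. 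Since $\P[G]\to1$ as $\varepsilon\to0$, the desired almost sure statement follows. This trick lets you use Corollary~\ref{cor:condBI-connectivity-int} as a black box, with no need to reopen the proof of \cite[Theorem~1.1]{Li-BI}.
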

\begin{proof}
For $\omega = \sum\limits_{i\geq 1}\delta_{(w_i^*,\alpha_i)}$ and $0\leq \alpha'<\alpha''$, we denote by $\iota^{\alpha',\alpha''}$ the Brownian interlacement point process with labels between $\alpha'$ and $\alpha''$: 
\[
\iota^{\alpha',\alpha''}=
\iota^{\alpha',\alpha''}(\omega) = \sum\limits_{i\geq 1:\,\alpha_i\in[\alpha',\alpha'']}\delta_{w_i^*}.
\]
Note that $\iota^{\alpha',\alpha''}$ is independent from $\iota^{\alpha'} (= \iota^{0,\alpha'})$ and has the same distribution as $\iota^{\alpha'' - \alpha'}$.

\smallskip

For $\varepsilon>0$, we write the Brownian interlacement point process $\iota^\alpha$ as the sum of independent interlacement point processes $\iota^{\alpha-\varepsilon}+\iota^{\alpha-\varepsilon,\alpha}=:\hat\iota+\check\iota$. If $\varepsilon$ is small enough, then with high probability none of the trajectories from $\check\iota$ visits $B(K)$; so by Corollary~\ref{cor:condBI-connectivity-int} one can connect $X_i$ and $X_i'$ inside the 
$1$-interior of the union of $1$-sausages around the conditional Brownian motions $\gamma_i$, $\gamma_i'$, and the trajectories of $\check\iota$, which is a subset of $\mathrm{int}_1(\mathcal I_{K,n})$. 
Since it works for any $\varepsilon$, the result follows. 

More precisely, let $\hat N_K$ be the number of trajectories of $\hat \iota$ that visit $B(K)$; and given $\hat N_K = \hat n$, define $(\hat X_i,\hat X_i')$ and $(\hat\gamma_i,\hat\gamma_i')$ for $\hat \iota$ analogously to $(X_i,X_i')$ and $(\gamma_i,\gamma_i')$ for $\iota^\alpha$ above. 

Since $\hat\iota$ and $\check\iota$ are independent, by Corollary~\ref{cor:condBI-connectivity-int} almost surely for every $1\leq i\leq \hat n$, there exist $\check w^*_{i,1},\ldots,\check w^*_{i,k_i}$ in the support of $\check \iota$, so that $\hat\gamma_i\stackrel{1}\longleftrightarrow \check w^*_{i,1}, \check w^*_{i,1}\stackrel{1}\longleftrightarrow \check w^*_{i,2},\ldots, \check w^*_{i,k_i}\stackrel{1}\longleftrightarrow \hat\gamma_i$. 
We denote this event by $\hat A$ and the event in \eqref{eq:r-connectivity-IKn} by $A$. 
Note that $\hat A$ implies $A$ if none of the trajectories of $\check\iota$ visits $B(K)$. Denote the latter event by $G$ and note that $\P[G]\xrightarrow[\varepsilon\to0]{} 1$. Then
\[
\P[A,\,N_K=n] \geq \P[\hat A\cap G,\,N_K=n] = \P[G,\,N_K = n]\xrightarrow[\varepsilon\to0]{}\P[N_K = n],
\]
so $\P[A\,|\,N_K=n] = 1$. 
\end{proof}

\subsection{Number of infinite components is \texorpdfstring{$0$}{0}, \texorpdfstring{$1$}{1} or \texorpdfstring{$\infty$}{infinite}}

In this section we rule out the case $N=k$ a.s.\ for some $2\leq k<\infty$. 
We begin with an auxiliary lemma. 
\begin{lemma}\label{l:N=k-complement}
Let $N=k$ a.s.\ for some $2\leq k<\infty$. Then for every $R>0$, $\mathcal V^\alpha\setminus B(R)$ contains exactly $k$ infinite components a.s.
\end{lemma}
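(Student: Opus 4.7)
The plan is to prove the two inequalities $N^{\mathrm{out}}_R\geq k$ and $N^{\mathrm{out}}_R\leq k$ almost surely, where $N^{\mathrm{out}}_R$ denotes the number of infinite connected components of $\mathcal V^\alpha\setminus B(R)$. The first is the topological/``no lost branches'' inequality; the second is where the hypothesis $N=k$ is used crucially, via a local modification.

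For the lower bound I would invoke Theorem~\ref{thm:expected-number-components}, which, as noted in the introduction, implies that every unbounded component of $\mathcal V^\alpha$ contains a continuous path to infinity. Applied to the $k$ a.s.\ infinite components $C_1,\ldots,C_k$, it yields paths $\gamma_i:[0,\infty)\to C_i$ with $\|\gamma_i(t)\|\to\infty$. For $t_i$ large enough that $\gamma_i([t_i,\infty))\subseteq \mathbb R^d\setminus B(R)$, this tail is an unbounded connected subset of $C_i\setminus B(R)$ and therefore lies in a unique infinite component of $\mathcal V^\alpha\setminus B(R)$; since the $C_i$'s are pairwise disjoint and no infinite component of $\mathcal V^\alpha\setminus B(R)$ can bridge two of them (being itself unbounded and connected, it would lie in a single $C_i$), this gives $k$ distinct infinite components.

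For the upper bound I would argue by contradiction: if $\P[N^{\mathrm{out}}_R\geq k+1]>0$, then on a positive-probability event some $C_i$ contains at least two distinct infinite components $D_1,D_2$ of $\mathcal V^\alpha\setminus B(R)$, which are joined through $B(R)$ by a vacant path. The idea is then to perform a local modification inside some larger ball $B(R')$ (with $R'>R+1$ to be chosen) that (i) leaves $\mathcal V^\alpha\setminus B(R')$ untouched and (ii) separates $D_1$ from $D_2$ in the modified $\mathcal V^\alpha$; this would produce a configuration with $N\geq k+1$ with positive probability, contradicting $N=k$ a.s. Concretely, I would decompose $\iota^\alpha=\iota'+\iota''$ as in Section~\ref{sec:uniqueness-notation} with the role of $K$ played by $R'$, condition on $\iota''$ and on the entry/exit points and outside-tails $(\gamma_i,\gamma_i')$ of the finitely many trajectories in $\iota'$, and then use the absolute continuity of the conditional law of the Brownian bridges $\widetilde\gamma_i$ given by \eqref{eq:sample-2} to realize, with positive conditional probability, a configuration in which $\mathcal I_{K,n}\cup\bigcup_i B(\widetilde\gamma_i,1)$ covers a thick shell inside $B(R+\tfrac12)$, thereby blocking every continuous vacant path from $D_1$ to $D_2$.

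The principal obstacle is that Brownian interlacements has no finite-energy property: one cannot delete or freely reroute a trajectory, and the only available randomness once one conditions outside $B(R')$ is carried by the bridges $\widetilde\gamma_i$. To guarantee that these bridges alone suffice to build the desired blocking shell, I would rely on Proposition~\ref{prop:BI-connectivity-int} (and its consequence Proposition~\ref{prop:r-connectivity-IKn}), which ensures that the $1$-interior of $\mathcal I_{K,n}$ is connected enough that the conditioned bridges can be driven through a common thickened tube inside $B(R')$ joining all the relevant $X_i,X_i'$. This is exactly the rerouting strategy advertised in the introduction, and making the construction sufficiently explicit to produce a quantitative positive-probability lower bound is the delicate technical step of the proof.
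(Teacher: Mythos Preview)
Your lower bound via Theorem~\ref{thm:expected-number-components} is fine, and in fact more careful than the paper's one-line claim (for a general unbounded open connected $U\subset\R^d$ and closed ball $B$ it \emph{can} happen that every component of $U\setminus B$ is bounded, so the existence of a path to infinity in each $C_i$ is the right justification).

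Your upper bound, however, has a genuine gap. You decompose at $R'>R$, condition on $\mathcal I_{R',n}$, and reroute the bridges to cover a blocking shell separating $D_1$ from $D_2$. But $D_1,D_2$ are components of $\mathcal V^\alpha\setminus B(R)$, and $\mathcal V^\alpha$ depends on the \emph{original} bridges, which are not confined to $B(R')$; after conditioning on $\mathcal I_{R',n}$ the sets $D_1,D_2$ are no longer determined, and in the larger bridge-free set $\mathcal V_{R',n}\setminus B(R)$ the two pieces may already be connected to each other \emph{outside} your shell---so covering the shell does not separate them in the modified configuration. Your appeal to Proposition~\ref{prop:r-connectivity-IKn} is also misplaced: that result routes bridges through $\mathrm{int}_1(\mathcal I_{K,n})$ so that their sausages do \emph{not} enlarge the occupied set (it is used in Proposition~\ref{prop:N=k} and Lemma~\ref{l:trifurcation} to \emph{preserve} vacant connections), whereas here you need the bridges to \emph{add} occupied set, which requires no routing at all. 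The paper's fix is much simpler: decompose at the radius $K$ where the bad event is first seen, take $R$ so large that the (a.s.\ bounded) bridges lie inside $B(R)$---so that $\mathcal V_{K,n}\setminus B(R)=\mathcal V^\alpha\setminus B(R)$ has $\geq k+1$ infinite components, now a bridge-independent statement---and then simply force each bridge sausage $B(\widetilde\gamma_i,1)$ to cover \emph{all} of $B(R)$ (positive conditional probability once $n\geq 1$ and $\|X_i-X_i'\|\geq\delta>0$). On that event $\mathcal V^\alpha\subseteq\mathcal V_{K,n}\setminus B(R)$, the $k+1$ infinite components persist in $\mathcal V^\alpha$, and $N\geq k+1$ with positive probability---no shell and no Proposition~\ref{prop:r-connectivity-IKn}.
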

\begin{proof}
Since $N=k$ a.s., $\mathcal V^\alpha\setminus B(R)$ contains at least $k$ infinite components a.s. 

\smallskip

Let us assume that for some $K$, $\mathcal V^\alpha\setminus B(K)$ contains at least $k+1$ infinite components with positive probability and denote the respective event by $E_K$. Note that $\P[E_{K'}]>0$ for all $K'>K$, since $E_K\subseteq E_{K'}$. 

Fix $n$, such that the probability of event $E_{K,n}=E_K\cap\{N_K=n\}$ is positive. By choosing $K$ large enough, we may assume that $n\geq 1$.

Since the bridges $\widetilde \gamma_i$ from $X_i$ to $X_i'$ have finite range, if $E_{K,n}$ occurs, then for large enough $R$, $\mathcal V_{K,n}\setminus B(R) = \mathcal V^\alpha\setminus B(R)$, in particular, $\mathcal V_{K,n}\setminus B(R)$ contains at least $k+1$ infinite connected components. Thus, if we denote for $R>0$ and $\delta>0$ by $E_{K,n,R,\delta}'$ the event that 
\begin{itemize}
\item[(a)]
$N_K=n$,  
\item[(b)]
$\mathcal V_{K,n}\setminus B(R)$ contains at least $k+1$ infinite connected components, 
\item[(c)]
$\|X_i-X_i'\|\geq \delta$ for all $1\leq i\leq n$, 
\end{itemize}
then $\P[E_{K,n,R,\delta}']>0$ for large enough $R$ and small enough $\delta$. 

\smallskip

Let $G$ be the event that every $1$-sausage $B(\widetilde\gamma_i,1)$ around the Brownian bridge from $X_i$ to $X_i'$ covers $B(R)$. Then, $\P\big[G\,|\,N_K=n,\,\|X_i-X_i'\|\geq \delta,\text{ for all }i\big]>c>0$. Hence $\P[E_{K,n,R,\delta}'\cap G]>0$. 
Now, if $E_{K,n,R,\delta}'\cap G$ occurs, then $\mathcal V^\alpha = \mathcal V^\alpha\setminus B(R)$ contains at least $k+1$ infinite connected components with positive probability, which contradicts the assumption that $N=k$ a.s. The proof is completed. 
\end{proof}

\begin{proposition}\label{prop:N=k}
$\P[N=k] = 0$ for all $2\leq k<\infty$.
\end{proposition}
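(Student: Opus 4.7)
The plan is a Burton--Keane style local surgery. Assuming for contradiction that $\P[N=k]=1$ with $2 \leq k < \infty$, I will exhibit a positive-probability event on which $N=1$, contradicting the constancy of $N$. Since we lack finite energy, the surgery is implemented by rerouting the bridges $\widetilde\gamma_i$ into $\mathrm{int}_1(\mathcal{I}_{K',n})$, made possible by Proposition~\ref{prop:r-connectivity-IKn} together with the support theorem for Brownian bridges.

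First I would fix scales. By Lemma~\ref{l:N=k-complement}, $\mathcal{V}^\alpha$ has a.s.\ $k$ infinite (hence unbounded, nonempty) components $\mathcal{C}_1,\ldots,\mathcal{C}_k$, so monotone convergence gives $\P[D_K]\to 1$ as $K\to\infty$, where $D_K := \bigcap_{i=1}^k \{\mathcal{C}_i \cap B(K) \neq \emptyset\}$. Fix $K$ with $\P[D_K]>0$ and set $K'=K+2$. Adopting the decomposition of Section~\ref{sec:uniqueness-notation} at scale $K'$, let $\mathcal{F}$ be the $\sigma$-algebra generated by $\iota''$ and the outer quadruples $(X_i,X_i',\gamma_i,\gamma_i')_{1\leq i\leq N_{K'}}$; then $\mathcal{I}_{K',n}$ and $\mathcal{V}_{K',n}$ are $\mathcal{F}$-measurable. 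Because every trajectory piece feeding $\mathcal{I}_{K',n}$ lies outside $B(K')$, we get $\mathcal{I}_{K',n} \cap B(K'-1) = \emptyset$ automatically, so the connected set $B(K'-1)$ is entirely contained in $\mathcal{V}_{K',n}$.

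The crux is the $\mathcal{F}$-measurable event
\[
F_1 := \{\mathcal{V}_{K',n} \text{ has exactly one infinite component}\}.
\]
I would show $D_K \subseteq F_1$ a.s., giving $\P[F_1] \geq \P[D_K] > 0$. Let $\mathcal{D}$ be the $\mathcal{V}_{K',n}$-component containing $B(K'-1)$. On $D_K$, each $\mathcal{C}_i$ contains some $y_i \in B(K) \subseteq B(K'-1) \subseteq \mathcal{D}$, and since $\mathcal{C}_i$ is connected in $\mathcal{V}^\alpha \subseteq \mathcal{V}_{K',n}$, we obtain $\mathcal{C}_i \subseteq \mathcal{D}$ (in particular $\mathcal{D}$ is infinite). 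Conversely, the bridges $\widetilde\gamma_i$ a.s.\ have finite range, so $\mathcal{V}_{K',n}$ and $\mathcal{V}^\alpha$ agree outside some finite ball; hence any infinite $\mathcal{V}_{K',n}$-component $\mathcal{D}'$ contains an unbounded open subset of $\mathcal{V}^\alpha$, which must be included in some $\mathcal{C}_j$ and (by connectivity in $\mathcal{V}^\alpha \subseteq \mathcal{V}_{K',n}$) forces $\mathcal{C}_j \subseteq \mathcal{D}'$. Since also $\mathcal{C}_j \subseteq \mathcal{D}$, we have $\mathcal{D}' \cap \mathcal{D} \neq \emptyset$, forcing $\mathcal{D}' = \mathcal{D}$. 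Thus $\mathcal{V}_{K',n}$ has exactly one infinite component, proving $F_1$.

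Finally, let $E_1 := \{\widetilde\gamma_i \subseteq \mathrm{int}_1(\mathcal{I}_{K',n})\text{ for all } i\}$, on which $\bigcup_i B(\widetilde\gamma_i,1) \subseteq \mathcal{I}_{K',n}$ and therefore $\mathcal{V}^\alpha = \mathcal{V}_{K',n}$. Given $\mathcal{F}$, the bridges are conditionally independent with the mixture-of-Brownian-bridges laws \eqref{eq:sample-2}; by Proposition~\ref{prop:r-connectivity-IKn} there is a.s.\ a continuous path $\xi_i$ from $X_i$ to $X_i'$ inside the open set $\mathrm{int}_1(\mathcal{I}_{K',n})$, and the support theorem for Brownian bridges (positivity of any sup-norm tube around $\xi_i$, integrated against the density $p_t(x,x')/G(x,x')$) yields $\P[\widetilde\gamma_i \subseteq \mathrm{int}_1(\mathcal{I}_{K',n}) \mid \mathcal{F}] > 0$ a.s. Conditional independence of the bridges then gives $\P[E_1 \mid \mathcal{F}] > 0$ a.s., whence $\P[F_1 \cap E_1] = \E[\mathbf{1}_{F_1}\P[E_1 \mid \mathcal{F}]] > 0$. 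On $F_1 \cap E_1$ one has $N=1$, contradicting $\P[N=k]=1$. The main obstacle is that the naive event $D_K \cap E_1$ is in fact \emph{empty} --- conditioning on $D_K$ pins the bridges into configurations blocking the merging, so $D_K$ and $E_1$ cannot both hold simultaneously; the surgery must therefore be routed through the $\mathcal{F}$-measurable consequence $F_1$ of $D_K$, not $D_K$ itself.
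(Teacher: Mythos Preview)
Your overall strategy---pass to the bridge-free set $\mathcal V_{K',n}$, show it has a unique infinite component, then reroute the bridges via Proposition~\ref{prop:r-connectivity-IKn}---is the paper's strategy. But the rerouting step has a real gap. You assert that $\mathrm{int}_1(\mathcal I_{K',n})$ is open and then apply the support theorem to force $\widetilde\gamma_i\subset\mathrm{int}_1(\mathcal I_{K',n})$ with positive conditional probability. In fact $\mathrm{int}_1(A)=\{x:B(x,1)\subseteq A\}$ is \emph{closed} whenever $A$ is closed, and there is no reason for it to contain an open tube around the path $\xi_i$ furnished by Proposition~\ref{prop:r-connectivity-IKn}. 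Concretely, along any stretch where $\xi_i$ runs inside a single Brownian trajectory $W$, a point $x'$ off $W$ satisfies $B(x',1)\not\subset B(W,1)$, and nothing guarantees the remaining sausages in $\mathcal I_{K',n}$ cover the protruding sliver of $B(x',1)$. Thus the event $E_1=\{\widetilde\gamma_i\subset\mathrm{int}_1(\mathcal I_{K',n})\ \forall i\}$ may well have conditional probability zero, and the clean identity $\mathcal V^\alpha=\mathcal V_{K',n}$ on which your endgame rests is unavailable. The paper confronts exactly this obstruction: it does \emph{not} try to keep the bridges inside $\mathrm{int}_1(\mathcal I_{K,n})$, but only inside genuine open tubes $T_i=B(\pi_i,2\sqrt d\delta)$ that lie in a $3\sqrt d\delta$-neighborhood of $\mathrm{int}_1(\mathcal I_{K,n})$; to compensate for the resulting $O(\delta)$ intrusion of the rerouted sausages into $\mathcal V_{K,n}$, it first fixes explicit vacant lattice paths $\rho_1,\dots,\rho_k$ with $5\sqrt d\delta$-thick collars in $\mathcal V_{K,n}$, linking $B(K-2)$ to the $k$ infinite components of $\mathcal V_{K,n}\setminus B(R)$, and verifies these collars are untouched by the surgery. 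Your argument bypasses this two-sided $\delta$-bookkeeping, and that is precisely where it breaks.

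A smaller point: in showing $D_K\subseteq F_1$, you write that an unbounded $\mathcal V_{K',n}$-component $\mathcal D'$ ``contains an unbounded open subset of $\mathcal V^\alpha$, which must be included in some $\mathcal C_j$''. But $\mathcal D'\setminus B(R)$ need not be connected; an unbounded connected open set minus a ball can consist entirely of bounded pieces (think of a disk with rays of lengths $1,2,3,\dots$), so the inclusion in a single $\mathcal C_j$ does not follow. One has to invoke Lemma~\ref{l:N=k-complement} to pin down that $\mathcal V_{K',n}\setminus B(R)=\mathcal V^\alpha\setminus B(R)$ has \emph{exactly} $k$ infinite components and argue further; the paper avoids this by encoding the needed connectivity directly into the event $E'_{K,n,R}$ via the paths $\rho_j$.
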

\begin{proof}
Assume on the contrary that $N=k$ a.s.\ for some $2\leq k<\infty$. 

Fix $K$ large enough, so that the ball $B(K-2)$ intersects all $k$ infinite connected components with positive probability. Denote this event by $E_K$.

Fix $n$ such that the probability of event $E_{K,n}=E_K\cap\{N_K=n\}$ is positive. 
On the event $E_{K,n}$, the infinite connected component of $\mathcal V_{K,n}$ is unique and contains $B(K-2)$. 

\smallskip

Next, we explain that the Brownian bridges $\widetilde \gamma_i$ between $X_i$ and $X_i'$ can be rerouted with positive probability, so that the respective $1$-sausages do not cut the infinite component of $\mathcal V_{K,n}$ into several infinite components. 
Here, we use Proposition~\ref{prop:r-connectivity-IKn} to find paths from $X_i$ to $X_i'$ in the $1$-interior of $\mathcal I_{K,n}$; we then reroute the Brownian bridges from $X_i$ to $X_i'$ through thin tunnels around the respective paths. The corresponding $1$-sausages can only intersect $\mathcal V_{K,n}$ close to its boundary; thus, if the tunnels are thin enough, the sausages cannot cut the unique infinite component of $\mathcal V_{K,n}$ into several infinite components. 
This implies that $\mathcal V^\alpha$ contains a unique infinite component with positive probability and leads to a contradiction with the initial assumption. 

\smallskip

We now present the details of the construction. Since the bridges $\widetilde\gamma_i$ have finite range and by Lemma~\ref{l:N=k-complement}, if the event $E_{K,n}$ occurs, then $\mathcal V_{K,n}\setminus B(R)$ contains exactly $k$ infinite connected components for all large enough $R$ a.s. Let $E_{K,n,R}'$ be the event that 
\begin{itemize}
\item[(a)]
$N_K = n$,
\item[(b)]
the unique infinite connected component $\mathcal C'$ of $\mathcal V_{K,n}$ contains $B(K-2)$ and $\mathcal C'\setminus B(R)$ contains exactly $k$ infinite connected components,
\item[(c)]
for all $1\leq i\leq n$, $X_i$ is connected to $X_i'$ in $\mathrm{int}_1(\mathcal I_{K,n})\cap B(R-2)$.
\end{itemize}
By Proposition~\ref{prop:r-connectivity-IKn}, $\P[E_{K,n,R}']>0$ for $R$ large enough. 

On the event $E_{K,n,R}'$ there exist $n$ paths  in $\mathrm{int}_1(\mathcal I_{K,n})\cap B(R-2)$ from $X_i$ to $X_i'$ as well as $k$ paths in $\mathcal V_{K,n}$ connecting each of the $k$ infinite connected components of $\mathcal V_{K,n}\setminus B(R)$ to $B(K-2)$. 
Each of the $k$ vacant paths is contained in $\mathcal V_{K,n}$ together with some open neighborhood; therefore, there exist $\delta$ small enough and nearest neighbor paths 
$\rho_j = (y_{j,0},\dots, y_{j,m_j})$, $1\leq j\leq k$, on the lattice $\delta\Z^d$,
such that 
\begin{itemize}
\item
for all $j$, $y_{j,0}\in B(K-2)$, $y_{j,m_j}\notin B(R)$,
\end{itemize}
and with positive probability, 
\begin{itemize}
\item
for all $j$, $B(\rho_j, 5\sqrt{d}\delta) = \bigcup\limits_{m=0}^{m_j} B(y_{j,m},5\sqrt{d}\delta)\subseteq \mathcal V_{K,n}$;
\item
$y_{1,m_1},\ldots, y_{k,m_k}$ belong to different infinite connected components of $\mathcal V_{K,n}\setminus B(R)$.
\end{itemize}
Furthermore, there exist nearest neighbor paths 
$\pi_i = (x_{i,0},\ldots, x_{i,n_i})$, $1\leq i\leq n$, on the lattice $\delta\Z^d$, 
such that with positive probability, 
\begin{itemize}
\item
for all $i$, $X_i\in B(x_{i,0},\sqrt{d}\delta)$, $X_i'\in B(x_{i,n_i},\sqrt{d}\delta)$;
\item
for all $i$, $\pi_i$ is contained in a $\sqrt{d}\delta$-neighborhood of a path in $\mathrm{int}_1(\mathcal I_{K,n})\cap B(R-2)$ from $X_i$ to $X_i'$.
\end{itemize}
We denote this subevent of $E_{K,n,R}'$ by $F = F\big(K,n,R,\delta, (\rho_j)_j, (\pi_i)_i\big)$. 

\smallskip

For $1\leq i\leq n$, let $T_i = B(\pi_i, 2\sqrt{d}\delta) = \bigcup\limits_{m=0}^{n_i} B(x_{i,m},2\sqrt{d}\delta)$. 
Note that $T_i$ is contained in a $3\sqrt{d}\delta$-neighborhood of $\mathrm{int}_1(\mathcal I_{K,n})\cap B(R-2)$. 

Conditioned on $X_i\in B(x_{i,0},\sqrt{d}\delta)$ and $X_i'\in B(x_{i,n_i},\sqrt{d}\delta)$, the probability that the Brownian bridge $\widetilde \gamma_i$ from $X_i$ to $X_i'$ stays in $T_i$ is $\geq c_i>0$. 

Denote by $G$ the event that for all $i$, $X_i\in B(x_{i,0},\sqrt{d}\delta)$, $X_i'\in B(x_{i,n_i},\sqrt{d}\delta)$, and $\widetilde \gamma_i\subset T_i$. Then 
\[
\P[F\cap G] = \mathbb E\big[\mathds{1}_F\,\P[G\,|\,N_K=n,\,(X_i,X_i')_{1\leq i\leq n},\, \mathcal I_{K,n}]\big]\geq \P[F]\,\prod\limits_{i=1}^nc_i>0.
\]

It remains to notice, that when $F\cap G$ occurs, then the $1$-sausages $B(\widetilde\gamma_i,1)$ around the bridges $\widetilde \gamma_i$ (a) do not disconnect the points $y_{j,m}$, $1\leq j\leq k$, $0\leq m\leq m_j$, in the vacant set $\mathcal V_{K,n}$ and (b) do not intersect $B(K-2)\cup B(R)^c$. Thus, $F\cap G$ implies the uniqueness of the infinite connected component in the vacant set $\mathcal V^\alpha$; that is $\P[N=1]>0$ and we have obtained a contradiction with the assumption that $\P[N=k]=1$ for some $2\leq k<\infty$. The proof is completed. 
\end{proof}

\subsection{Expected number of large components}\label{sec:expected-number}

In this section we prove that the expected number of big components of $\mathcal V^\alpha$ in balls is finite; this, in particular, implies that any finite set is intersected by only finitely many infinite components of $\mathcal V^\alpha$ in expectation. 
We use this fact in the next section to rule out the possibility of infinitely many infinite connected components in $\mathcal V^\alpha$. 

\smallskip

For $r<R$, let $\mathrm{An}(r,R)=\{x\in\R^d\,:\,r\leq \|x\|\leq R\}$.

\begin{theorem}\label{thm:expected-number-components}
Let $r<R$. Let $N^\alpha(r,R)$ be the number of connected components of $\mathcal V^\alpha\cap\mathrm{An}(r,R)$, which intersect both $\partial B(r)$ and $\partial B(R)$. Then 
\[
\E\big[N^\alpha(r,R)\big]<\infty,
\]
for all $\alpha>0$, $r>0$ and $R>r+7(d+1)$.
\end{theorem}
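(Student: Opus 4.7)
The approach is to bound $\E[N^\alpha(r,R)] = \sum_{N\geq 1}\P[N^\alpha(r,R)\geq N]$ using Theorem~\ref{thm:uniqueness-severalballs}. The starting geometric observation is that if $\mathcal C_i,\mathcal C_j$ are distinct crossing components of $\mathcal V^\alpha \cap \mathrm{An}(r,R)$ and both meet a ball $B(x,\varepsilon) \subseteq \mathrm{An}(r,R)$, then $\mathcal C_i \cap B(x,\varepsilon)$ and $\mathcal C_j \cap B(x,\varepsilon)$ must lie in distinct components of $\mathcal V^\alpha \cap B(x,\varepsilon)$: any connecting path inside the ball would remain in $\mathcal V^\alpha \cap \mathrm{An}(r,R)$ and merge $\mathcal C_i$ with $\mathcal C_j$, a contradiction. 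Thus, any ball $B(x,\varepsilon) \subseteq \mathrm{An}(r,R)$ touched by several crossing components is a microscopic nonuniqueness event in the sense of Theorem~\ref{thm:uniqueness-severalballs}.

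\textbf{Geometric reduction.} I would establish a deterministic lemma of the form: the event $\{N^\alpha(r,R) \geq N\}$ implies the existence of $J = J(N)$ points $x_1,\dots,x_J \in \mathrm{An}(r,R)$ with pairwise distances $>6$ and a scale $\varepsilon = \varepsilon(N)\in(0,1)$ such that $\mathcal V^\alpha \cap B(x_j,\varepsilon)$ has at least two components for each $j$. The mechanism is that each crossing component meets every sphere $\partial B(s)$ for $s \in [r,R]$ by connectedness and the intermediate value theorem, and by the same argument as above distinct crossing components hit each such sphere in points that cannot be joined on the sphere inside $\mathcal V^\alpha$; two crossings hitting a sphere at nearby points then produce a microscopic nonuniqueness event nearby via the observation of the previous paragraph. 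The hypothesis $R > r + 7(d+1)$ provides room for $d+1$ concentric sub-shells of width $7$, from which one extracts, via a dimension-dependent pigeonhole argument across sub-shells, nonuniqueness events at the required pairwise distance exceeding $6$.

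\textbf{Probabilistic estimate and conclusion.} Sampling the Brownian interlacement trajectories that intersect a neighborhood of $B(R+1)$ as in Section~\ref{sec:BIPP}, and conditioning on their number (Poisson of finite parameter), Theorem~\ref{thm:uniqueness-severalballs} applies to the induced ensemble of Wiener sausages and bounds the probability of simultaneous microscopic nonuniqueness at $J$ prescribed well-separated $\varepsilon$-balls by $\bigl(C\log^m(1/\varepsilon)\,\varepsilon^{d+1}\bigr)^J$. A union bound over the $O(R^{dJ})$ placements of well-separated $J$-tuples on an $\varepsilon$-grid in the annulus, combined with summation over dyadic scales $\varepsilon_k = 2^{-k}$ (to handle the random scale $\varepsilon(N)$ coming from the geometric reduction) and over $N$, yields $\sum_N \P[N^\alpha(r,R)\geq N] < \infty$, provided $J(N)$ grows and $\varepsilon(N)$ shrinks at suitably matched rates. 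The main obstacle is the quantitative geometric reduction in the previous paragraph: choosing $J(N)$ and $\varepsilon(N)$ so that the combinatorial factor $R^{dJ}$ is dominated by $\varepsilon^{(d+1)J}$ up to logarithmic losses, which is where the thickness $7(d+1)$ in the hypothesis should enter decisively.
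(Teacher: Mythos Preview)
Your approach is the paper's approach, but the execution in the last paragraph is muddled and contains one impossible step. You write ``provided $J(N)$ grows''; it cannot. The annulus has fixed width $R-r$, and the separation constraint $>6$ forces the centers onto at most $\lfloor (R-r)/7\rfloor$ concentric spheres, so $J$ is bounded by $d$ (this is exactly what the hypothesis $R>r+7(d+1)$ buys). The paper simply takes $J=d$ and places one center on each sphere $\partial B(r+7k)$, $1\le k\le d$.

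Once $J=d$ is fixed, the rest is a one-line computation, not a balancing act: cover each sphere $\partial B(r+7k)$ by $n$ balls of radius $\varepsilon=Cn^{-1/(d-1)}$; if $N^\alpha(r,R)>n$ then by pigeonhole on each sphere some covering ball meets two crossing components, hence witnesses microscopic nonuniqueness. The union bound is over the $n^d$ choices of which covering ball on each sphere (not over a full $\varepsilon$-grid in the annulus, and not $O(R^{dJ})$ as you wrote, which drops the crucial $\varepsilon$-dependence). The estimate \eqref{eq:uniqueness-severalballs-BI} then gives
\[
\P[N^\alpha(r,R)>n]\le n^d\bigl(C\log^m(\tfrac1\varepsilon)\,\varepsilon^{d+1}\bigr)^d
= C'(\log n)^{md}\,n^{-2d/(d-1)},
\]
which is summable since $2d/(d-1)>2$. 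There is no random scale and no dyadic sum: $\varepsilon$ is determined by $n$ via the pigeonhole step.
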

\begin{proof}
It suffices to prove that for all large $n$, $\P\big[N^\alpha(r,R)> n\big]\leq \frac{1}{n^2}$.

\smallskip

Let $n\geq 1$. For $1\leq k\leq d$, let $R_k = r + 7k$ and consider $x_{k,1},\ldots, x_{k,n}\in\partial B(R_k)$ such that 
\[
\partial B(R_k)\subseteq \bigcup\limits_{i=1}^n B(x_{k,i},Cn^{-\tfrac{1}{d-1}}),
\]
for some $C=C(d,R)$. By the pigeon hole principle, if $N^\alpha(r,R)> n$, then for each $1\leq k\leq d$, there is a ball $B(x_{k,i_k},Cn^{-\tfrac{1}{d-1}})$, which intersects at least two (large) vacant components. Thus, 
\begin{eqnarray*}
\P\big[N^\alpha(r,R)> n\big]
&\leq 
&\P\Big[\bigcap_{k=1}^d\bigcup\limits_{i_k=1}^n\Big\{\begin{array}{c}\text{$\mathcal V^\alpha\cap B(x_{k,i_k},Cn^{-\tfrac{1}{d-1}})$ contains at}\\ \text{least $2$ connected components}\end{array}\Big\}\Big]\\
&\leq 
&n^d\sup\limits_{\substack{x_k\in\partial B(R_k)\\ 1\leq k\leq d}}
\P\Big[\begin{array}{c}\text{$\mathcal V^\alpha\cap B(x_k,Cn^{-\tfrac{1}{d-1}})$ contains at least $2$}\\ \text{connected components for all $1\leq k\leq d$}\end{array}\Big].
\end{eqnarray*}
By the local picture of the Brownian interlacements (see Introduction), Theorem~\ref{thm:uniqueness-severalballs} immediately gives that 
for all $x_1,\ldots, x_J\in\R^d$ with $\|x_j-x_{j'}\|>6$ ($j\neq j'$), for all $\varepsilon\in(0,1)$ and $\alpha>0$, 
\begin{equation}\label{eq:uniqueness-severalballs-BI}
\P\Big[\begin{array}{c}\text{$\mathcal V^\alpha\cap B(x_j,\varepsilon)$ contains at least $2$ connected}\\ \text{components for all $1\leq j\leq J$}\end{array}\Big]
\leq \Big(C\log^m\big(\tfrac1\varepsilon\big)\varepsilon^{d+1}\Big)^J.
\end{equation}

\medskip

We apply \eqref{eq:uniqueness-severalballs-BI} with $\varepsilon = Cn^{-\tfrac{1}{d-1}}$ and $J=d$ to get for some $C=C(d,R)$ and $m=m(d,R)$ that 
\[
\P\big[N^\alpha(r,R)> n\big]\leq n^d\,\Big(C(\log^m n) n^{-\frac{d+1}{d-1}}\Big)^d.
\]
Since 
$n^d\,n^{-\frac{d+1}{d-1}d}=n^{-2 - \frac{2}{d-1}}$, the right hand side is smaller than $\tfrac{1}{n^2}$ for all large $n$. 

The proof is completed.
\end{proof}

\subsection{Ruling out infinitely many infinite components}

We follow the classical Burton-Keane argument, but with a slightly more general notion of trifurcation. 
\begin{definition}
Let $t>0$. We say that $x\in\R^d$ is a \emph{$t$-trifurcation} if there is an infinite connected component $\mathcal C$ of the vacant set $\mathcal V^\alpha$, such that 
\begin{itemize}
\item[(a)]
$x\in \mathcal C$;
\item[(b)]
$\mathcal C\setminus \mathcal C_{x,t}$ contains at least $3$ infinite connected components, where $\mathcal C_{x,t}$ is the connected component of $x$ in $\mathcal C\cap B(x,t)$. 
\end{itemize}
\end{definition}
\begin{lemma}\label{l:trifurcation}
Assume that $N=\infty$ a.s.\ Then there exist $t>0$, such that 
\[
\P\big[\text{$0$ is a $t$-trifurcation}\big]>0.
\]
\end{lemma}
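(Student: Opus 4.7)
The plan is to mimic the local-surgery strategy of Proposition~\ref{prop:N=k}, with the modification that the bridges $\widetilde \gamma_i$ are rerouted so as to carve out a $Y$-shaped vacant star centered at the origin whose three legs reach into three distinct infinite vacant components. Fix $t>1$, $R>4t+10$ and $K>R+10$. Since $N=\infty$ almost surely, the number $N_R$ of infinite components of $\mathcal V^\alpha$ meeting $B(R)$ increases monotonically to $\infty$ with $R$, so $\P[N_R\geq 3]\to 1$ as $R\to\infty$; and by Theorem~\ref{thm:expected-number-components} every infinite component almost surely contains a continuous vacant path to infinity from any fixed ball. For $R$ large enough and some $n$, this gives a positive-probability event $E_0$ on which $N_K=n$, three distinct infinite components $\mathcal C_1,\mathcal C_2,\mathcal C_3$ of $\mathcal V^\alpha$ each connect $\partial B(R)$ to infinity by a continuous vacant path, and (by Proposition~\ref{prop:r-connectivity-IKn}) each pair $(X_i,X_i')$ is joined by a continuous path inside $\mathrm{int}_1(\mathcal I_{K,n})\cap B(R+5)$.

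On $E_0$, I would then pick three unit vectors $e_1,e_2,e_3$ and three points $z_j\in\mathcal C_j\cap\partial B(R)$ so that the polygonal star $\Sigma=\bigcup_{j=1}^3\{sz_j:s\in[0,1]\}$ together with three thin vacant corridors $\Gamma_j\subset\mathcal C_j$ from $z_j$ to infinity all admit open neighborhoods contained in $\mathcal V_{K,n}$. Taking $\delta>0$ small enough, I would select nearest-neighbor paths $\pi_i$ on $\delta\Z^d$ joining lattice neighborhoods of $X_i$ and $X_i'$ inside the $\sqrt{d}\delta$-thickening of $\mathrm{int}_1(\mathcal I_{K,n})\cap B(R+5)$, chosen so that the tubes $T_i=B(\pi_i,2\sqrt{d}\delta)$ avoid $B(\Sigma,3\sqrt{d}\delta)\cup B(0,t+3\sqrt{d}\delta)\cup\bigcup_j B(\Gamma_j,3\sqrt{d}\delta)$; exactly as in Proposition~\ref{prop:N=k}, the refined event $F\subseteq E_0$ still has positive probability. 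Conditionally on the exterior data the bridges $\widetilde\gamma_i$ are independent Brownian bridges from $X_i$ to $X_i'$, so the event $G$ that $X_i,X_i'$ lie in the appropriate $\sqrt{d}\delta$-balls and each $\widetilde \gamma_i$ stays inside its tube $T_i$ has positive conditional probability given $F$, by the same Brownian-bridge estimate used at the end of Proposition~\ref{prop:N=k}.

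On $F\cap G$ each sausage $B(\widetilde\gamma_i,1)$ is contained in a $3\sqrt{d}\delta$-neighborhood of $\mathrm{int}_1(\mathcal I_{K,n})\cap B(R+5)$, hence disjoint from the thickened star, from $B(0,t+2\sqrt{d}\delta)$, and from the corridors $\Gamma_j$. Therefore the vacant set $\mathcal V^\alpha$ contains $B(\Sigma,2\sqrt{d}\delta)$ together with all three corridors $\Gamma_j$, the vacant component $\mathcal C$ of $0$ is infinite, and removing $\mathcal C_{0,t}$ leaves (at least) three disjoint infinite pieces — the tails of $\Gamma_1,\Gamma_2,\Gamma_3$ beyond $B(0,t)$ — so $0$ is a $t$-trifurcation. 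The main technical obstacle will be the check that the three corridors really end up in three different connected components of $\mathcal C\setminus \mathcal C_{0,t}$: this amounts to ruling out that the swap from old to new bridges creates an unwanted vacant connection between distinct $\Gamma_j$'s inside the annulus $\mathrm{An}(R,K)$, which should be enforceable by choosing each corridor $\Gamma_j$ to hug the original component $\mathcal C_j$ (so the three $\Gamma_j$'s lie in three different components of $\mathcal V^\alpha\setminus B(R)$) together with the tubes $T_i$ being disjoint from every $B(\Gamma_j,3\sqrt{d}\delta)$; fixing the parameters in the order $t,R,n,(\mathcal C_j,\Gamma_j),\delta,(\pi_i)$ makes this consistent.
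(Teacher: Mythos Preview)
Your proposal has a genuine gap, rooted in a misunderstanding of the geometry of the rerouting method.

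First, a parameter inconsistency: you set $K>R+10$ and then ask for paths from $X_i,X_i'\in\partial B(K)$ inside $\mathrm{int}_1(\mathcal I_{K,n})\cap B(R+5)$. But every trajectory contributing to $\mathcal I_{K,n}$ stays in $\{\|x\|\geq K\}$ (the conditioned pieces $\gamma_i,\gamma_i'$ never re-enter the open ball $B(K)$, and trajectories in $\iota''$ never hit $B(K)$), so $\mathcal I_{K,n}\cap\{\|x\|<K-1\}=\emptyset$ and hence $\mathrm{int}_1(\mathcal I_{K,n})\subset\{\|x\|\geq K\}$. Since $R+5<K$, the set $\mathrm{int}_1(\mathcal I_{K,n})\cap B(R+5)$ is empty and no such paths exist. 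In the paper the roles are reversed: $K$ is the inner decomposition scale and $R>K$ is an outer cutoff for the rerouting.

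Second, and more seriously, even after repairing the ordering your $Y$-star construction cannot produce a $t$-trifurcation for a \emph{pre-fixed small} $t$. The rerouted bridges live in (a thin neighborhood of) $\mathrm{int}_1(\mathcal I_{K,n})$, which lies outside $B(K)$, so their $1$-sausages do not meet the open ball of radius $K-1$. Consequently the \emph{entire} ball $B(K-2)$ remains vacant in the modified $\mathcal V^\alpha$, not just your star $\Sigma\subset B(R)\subset B(K-2)$. The connected annulus $B(K-2)\setminus B(t)$ is then contained in $\mathcal C\setminus\mathcal C_{0,t}$ and joins the three points $z_1,z_2,z_3\in\partial B(R)$ to one another; your three corridors $\Gamma_j$ therefore all lie in a single component of $\mathcal C\setminus\mathcal C_{0,t}$. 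The remedy you suggest at the end (placing the $\Gamma_j$'s in different components of $\mathcal V^\alpha\setminus B(R)$ and keeping the tubes $T_i$ away from them) does not help: the unwanted connection is not created by the bridge swap---it is already present in $\mathcal V_{K,n}$, which contains all of $B(K-2)$---and the swap, acting only outside $B(K)$, cannot remove it.

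The paper's proof circumvents this by \emph{not} fixing $t$ in advance. It first produces, via the same rerouting, an infinite component $\mathcal C\supset B(K-2)$ such that $\mathcal C\setminus B(R)$ (with $R>K$) has at least three infinite pieces, and only then invokes Theorem~\ref{thm:expected-number-components} to show that for all sufficiently large $t$ one has $\mathcal C\cap B(R)\subset\mathcal C_{0,t}$, whence the three pieces of $\mathcal C\setminus B(R)$ remain disconnected in $\mathcal C\setminus\mathcal C_{0,t}$. You invoke Theorem~\ref{thm:expected-number-components} only to obtain paths to infinity, missing its essential role in passing from a macroscopic branching to a $t$-trifurcation.
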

\begin{proof}
The proof is very similar to the proof of Proposition~\ref{prop:N=k}. 

\smallskip

Fix a radius $K$ large enough, so that the ball $B(K-2)$ intersects at least $3$ infinite connected components with positive probability. Denote this event by $E_K$.

Fix $n$ such that the probability of event $E_{K,n}=E_K\cap\{N_K=n\}$ is positive. 
Since the Brownian bridges $\widetilde\gamma_i$ have finite range, if $E_{K,n}$ occurs, then $\mathcal V_{K,n}\setminus B(R) = \mathcal V^\alpha\setminus B(R)$ for all large $R$, in particular, $\mathcal V_{K,n}$ contains an infinite component $\mathcal C'$, such that $B(K-2)\subset \mathcal C'$ and $\mathcal C'\setminus B(R)$ contains at least $3$ infinite connected components.

For $R>0$, let $E_{K,n,R}'$ be the event that 
\begin{itemize}
\item[(a)]
$N_K = n$;
\item[(b)]
there is an infinite connected component $\mathcal C'$ in $\mathcal V_{K,n}$, such that $B(K-2)\subset\mathcal C'$ and $\mathcal C'\setminus B(R)$ contains at least $3$ infinite connected components;
\item[(c)]
for all $1\leq i\leq n$, $X_i$ is connected to $X_i'$ in $\mathrm{int}_r(\mathcal I_{K,n})\cap B(R-2)$.
\end{itemize}
By Proposition~\ref{prop:r-connectivity-IKn}, $\P[E_{K,n,R}']>0$ for all $R$ large enough. 

\smallskip

On the event $E_{K,n,R}'$ there exist $n$ paths  in $\mathrm{int}_1(\mathcal I_{K,n})\cap B(R-2)$ from $X_i$ to $X_i'$ as well as $3$ paths in $\mathcal V_{K,n}$ from $B(K-2)$ to different infinite connected components of $\mathcal V_{K,n}\setminus B(R)$. Now, precisely as in the proof of Proposition~\ref{prop:N=k}, one shows that with positive probability, the $1$-sausages $B(\widetilde \gamma_i,1)$ around the Brownian bridges $\widetilde \gamma_i$ from $X_i$ to $X_i'$ do not visit $B(K-2)\cup B(R)^c$ and do not disconnect the $3$ paths in $\mathcal V_{K,n}$ from $B(K-2)$ to $3$ different infinite connected components of $\mathcal V_{K,n}\setminus B(R)$. Thus, for $R$ large enough, with positive probability, $\mathcal V^\alpha$ contains an infinite connected component $\mathcal C$, such that $B(K-2)\subset\mathcal C$ and $\mathcal C\setminus B(R)$ contains at least $3$ infinite connected components. 
Call this event $F_{K,n,R}$.
We claim that $F_{K,n,R}$ implies that $0$ is a $t$-trifurcation for all $t$ large enough.

\smallskip

Assume that $F_{K,n,R}$ occurs and let $\{\mathcal C_i\}_{i\in I}$ be all $(\geq 3)$ the infinite connected components of $\mathcal C\setminus B(R)$, $\mathcal C_t$ the connected component of $0$ in $\mathcal C\cap B(t)$ and $\widetilde {\mathcal C}_t = \mathcal C\cap B(t)$.

We first claim that any $x\in \widetilde {\mathcal C}_R$ is connected to $0$ in $\mathcal C_t$ for all large $t$. Indeed, if it was the case that for every $t>R$, a $x_t\in \widetilde {\mathcal C}_R$ was not connected to $0$ in $\mathcal C_t$, then the number of vacant components in $\widetilde{\mathcal C}_{R'}$ that intersect both $\partial B(R)$ and $\partial B(R')$ would be infinite for any $R'>R$, which contradicts Theorem~\ref{thm:expected-number-components}.

Thus, let $t$ be large enough, so that $\widetilde{\mathcal C}_R$ is a subset of $\mathcal C_t$. We claim that $\mathcal C\setminus \mathcal C_t$ contains at least $3$ infinite connected components. Indeed, let $\mathcal C_i'$ be an infinite connected component of $\mathcal C_i\setminus B(t)$. (Note that $\mathcal C_i'\subseteq \mathcal C\setminus\mathcal C_t$.)
Since any $\mathcal C_i$ and $\mathcal C_j$ are not connected in $\mathcal C\setminus B(R)$, any $\mathcal C_i'$ and $\mathcal C_j'$ are not connected in $\mathcal C\setminus B(R) = \mathcal C\setminus \widetilde{\mathcal C}_R$. 
Since $\widetilde{\mathcal C}_R\subseteq\mathcal C_t$, any $\mathcal C_i'$ and $\mathcal C_j'$ are not connected in $\mathcal C\setminus\mathcal C_t$. Thus, $\mathcal C\setminus\mathcal C_t$ contains at least $3$ infinite connected components;  
by definition, $0$ is a $t$-trifurcation. 

Since $\P[F_{K,n,R}]>0$, $0$ is a $t$-trifurcation with positive probability for all $t$ large enough. The proof is completed. 
\end{proof}

\begin{proposition}\label{prop:N=01}
$\P[N=\infty] = 0$.
\end{proposition}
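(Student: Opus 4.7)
The plan is a continuum analogue of the Burton--Keane argument. Assuming for contradiction that $\P[N=\infty]=1$, Lemma~\ref{l:trifurcation} yields a $t>0$ with $p:=\P[0\text{ is a }t\text{-trifurcation}]>0$. Since being a $t$-trifurcation is a translation-invariant measurable function of $\mathcal V^\alpha$, Fubini gives that the expected Lebesgue measure of the set of $t$-trifurcations in $B(R_0)$ equals $p\,\mathrm{vol}(B(R_0))\asymp R_0^d$. The first step is to extract well-separated trifurcations: tile $B(R_0)$ by cubes of side $8t$, keep only cubes of a single color in a $3$-coloring of the tiling (so same-color cubes are at distance $\geq 16t$), and pick one $t$-trifurcation per chosen cube when available. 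By translation invariance and the volume bound, the expected number $K(R_0)$ of selected trifurcations satisfies $\E[K(R_0)]\geq c_1\,R_0^d$, and the corresponding sets $\mathcal C_{x_i,t}\subseteq B(x_i,t)$ are pairwise disjoint.

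The second step is a deterministic tree/leaf-counting. Removing $\bigcup_i\mathcal C_{x_i,t}$ from the infinite clusters of $\mathcal V^\alpha$ containing some $x_i$ turns each such cluster into a disjoint union of \emph{branches}; the forest in which each $x_i$ is an internal vertex of degree $\geq 3$ has, by the standard degree inequality, at least $K(R_0)+2$ infinite branches in $\mathcal V^\alpha\setminus\bigcup_i\mathcal C_{x_i,t}$. Each infinite branch has a tail crossing the annulus $\mathrm{An}(R_0+t+1,R_0+t+1+r)$ for any $r>0$; since $\bigcup_i\mathcal C_{x_i,t}\subseteq B(R_0+t)$ is disjoint from that annulus, tails from different branches produce distinct components of $\mathcal V^\alpha\cap\mathrm{An}(R_0+t+1,R_0+t+1+r)$ touching both boundaries, because any in-annulus path connecting two of them would also connect them in $\mathcal V^\alpha\setminus\bigcup_i\mathcal C_{x_i,t}$. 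Hence $K(R_0)+2\leq N^\alpha(R_0+t+1,R_0+t+1+r)$ for every $r>0$.

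The third step is a surface-area upper bound: for $c^*=c^*(d)$ large enough and all large $R$,
\[
\E\big[N^\alpha(R,R+c^*)\big]\leq C\,R^{d-1}.
\]
Cover the middle sphere $\partial B(R+c^*/2)$ by $\lesssim R^{d-1}$ unit balls $B(y_i,1)$. Every annular component counted by $N^\alpha(R,R+c^*)$ crosses $\partial B(R+c^*/2)$ and hence meets some $B(y_i,1)$; the number of such annular components meeting a fixed $B(y_i,1)$ is dominated by the count of components of $\mathcal V^\alpha\cap\mathrm{An}_{y_i}(1,c^*/2-1)$ touching both boundaries of that local annulus (which is contained in $\mathrm{An}(R,R+c^*)$ for $R$ large), an expectation which is uniformly finite by translation invariance and Theorem~\ref{thm:expected-number-components}. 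Combining step two (at $r=c^*$) with step three (at $R=R_0+t+1$) yields $c_1\,R_0^d\leq C\,R_0^{d-1}$, absurd for $R_0$ large enough.

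The main obstacle I expect is the deterministic tree/leaf step: one must carefully verify that pairwise disjoint choices of $\mathcal C_{x_i,t}$ really produce $K(R_0)+2$ infinite branches whose tails remain in distinct annular components in a far-enough annulus. The saving observation is that $\bigcup_i\mathcal C_{x_i,t}\subseteq B(R_0+t)$ is disjoint from the annulus $\mathrm{An}(R_0+t+1,R_0+t+1+c^*)$, so the annular component structure of $\mathcal V^\alpha$ there agrees with that of $\mathcal V^\alpha\setminus\bigcup_i\mathcal C_{x_i,t}$ and distinct branches remain distinct. A secondary care point is that the surface-area bound needs Theorem~\ref{thm:expected-number-components} together with a translation-invariant covering argument to obtain a uniform $O(R^{d-1})$ bound, rather than reading off the (weaker) polynomial-in-$R$ bound implicit in its proof.
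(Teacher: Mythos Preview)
Your argument is correct and follows essentially the same Burton--Keane route as the paper: positive density of well-separated $t$-trifurcations (the paper places them directly on a coarse lattice $L\Z^d$ with $L>2t$ rather than selecting from a tiling), the tree/leaf count from \cite{HJ-Uniqueness} yielding at least $K+2$ infinite branches, and a surface-area bound obtained by covering the boundary with $O(R^{d-1})$ unit balls and invoking Theorem~\ref{thm:expected-number-components}. One small slip: in dimension $d$ you need a $3^d$-coloring (not a $3$-coloring) of the cube tiling to achieve the stated separation between same-color cubes.
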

\begin{proof}
Assume on the contrary that $N=\infty$ a.s.
Let $t>0$ be as in Lemma~\ref{l:trifurcation}. 

By arguing exactly as in the proof of \cite[Theorem~2.4]{HJ-Uniqueness}, we notice that for any finite set of $t$-trifurcations $\mathcal T$ of an infinite connected component $\mathcal C$, such that $\mathcal C_{x,t}\cap\mathcal C_{x',t}=\emptyset$ for all different $x,x'\in\mathcal T$, the set $\mathcal C\setminus\bigcup\limits_{x\in\mathcal T}\mathcal C_{x,t}$ contains at least $|\mathcal T|+2$ infinite connected components. 
Thus, if $\mathcal C$ is an infinite cluster with $j$ $t$-trifurcations in a finite set $W$, such that the respective $j$ sets $\mathcal C_{x,t}$ are pairwise disjoint and do not intersect the boundary of $W$, then $\mathcal C\setminus W$ contains at least $j+2$ infinite connected components.

\smallskip

Let $L>2t$. For $n>0$, let $\mathcal T_n$ be the set of all $t$-trifurcations in $(L\Z^d)\cap (-nL,nL)^d$ and let $\mathcal Y_n$ be the set of 
infinite connected components of $\mathcal V^\alpha\setminus (-nL,nL)^d$ which intersect $\partial[-nL,nL]^d$. From the above observation, we obtain that 
$|\mathcal T_n|\leq |\mathcal Y_n| - 2$.
In particular, $\E\big[|\mathcal T_n|\big]\leq \E\big[|\mathcal Y_n|\big]$. 

\smallskip

On the one hand, by Lemma~\ref{l:trifurcation}, $\E\big[|\mathcal T_n|\big]\geq c (2n-1)^d$ for some $c>0$. 

On the other hand, there exist $z_1,\ldots, z_m\in\R^d$ with $m\leq Cn^{d-1}$ for some $C$, such that 
$\partial [-nL,nL]^d\subset \bigcup\limits_{i=1}^m B(z_i,1)$. Let $N_i$ be the number of infinite connected components in $\mathcal V^\alpha\setminus \mathrm{int}(B(z_i,1))$ which intersect $\partial B(z_i,1)$. Since every infinite connected component from $\mathcal Y_n$ intersects at least one of the balls $B(z_i,1)$, we have that $|\mathcal Y_n|\leq \sum\limits_{i=1}^m N_i$. 
Therefore, $\E\big[|\mathcal Y_n|\big]\leq \sum\limits_{i=1}^m \E[N_i] = m\E[N_0]\leq Cn^{d-1}\E[N_0]$, where $N_0$ is the number of infinite connected components in $\mathcal V^\alpha\setminus \mathrm{int}(B(1))$ which intersect $\partial B(1)$. 

\smallskip

As a result, $\E[N_0]\geq \frac{c\,n}{C}$ for all $n$. Hence $\E[N_0]=\infty$, which contradicts with Theorem~\ref{thm:expected-number-components}. The proof is completed. 
\end{proof}

\end{document}